\newcommand{\rrvert}{\vert}
\newcommand{\llvert}{\vert}
\newtheorem{theo}{Theorem}[section]
\newtheorem{Lem}[theo]{Lemma}%[theo]
\newtheorem{cor}[theo]{Corollary}%[theo]%[section]
\begin{document}
\begin{frontmatter}

\title{The \textit{K}-process on a tree as a scaling limit\\ of the
GREM-like trap model}
\runtitle{\textit{K}-process on a tree}

\begin{aug}
\author[A]{\fnms{L.~R.~G.} \snm{Fontes}\corref{}\ead[label=e1]{lrenato@ime.usp.br}\thanksref{t1}},
\author[B]{\fnms{R.~J.} \snm{Gava}\ead[label=e2]{gavamat@yahoo.com.br}\thanksref{t2}}
\and
\author[C]{\fnms{V.} \snm{Gayrard}\ead[label=e3]{veronique@gayrard.net}}
\runauthor{L.~R.~G. Fontes, R.~J. Gava and V. Gayrard}
\affiliation{University of S\~ao Paulo, Federal University of S\~ao Carlos\\ and Aix Marseille Universit\'e}
\address[A]{L.~R.~G. Fontes\\
IME-USP\\
Rua do Mat\~ao 1010\\
Cidade Universit\'aria\\
05508-090, S\~ao Paulo SP\\
Brazil\\
\printead{e1}} %adresu isvedimo komanda gale!
\address[B]{R.~J. Gava\\
CCET-UFSCar\\
Rod.~Washington Luiz\\
km 235, 13565-905, S\~ao Carlos SP\\
Brazil\\
\printead{e2}}
\address[C]{V. Gayrard\\
CNRS, CMI, LAPT\\
Aix Marseille Universit\'e\\
39 rue F.~Joliot Curie 13453\\
Marseille cedex 13\\
France\\
\printead{e3}}
\end{aug}
\thankstext{t1}{Supported in part by CNPq Grant 305760/2010-6 and
FAPESP Grant 2009/52379-8.}
\thankstext{t2}{Supported by FAPESP fellowship 2008/00999-0.}

% HISTORY:
\received{\smonth{9} \syear{2012}}
\revised{\smonth{4} \syear{2013}}

% ABSTRACT
%
\begin{abstract}
We introduce trap models on a finite volume $k$-level tree as a class
of Markov jump processes with state space the leaves of that tree. They
serve to describe the GREM-like trap model of Sasaki and Nemoto. Under
suitable conditions on the parameters of the trap model, we establish
its infinite volume limit, given by what we call a $K$-process in an
infinite $k$-level tree. From this we deduce that the $K$-process also
is the scaling limit of the \mbox{GREM-}like trap model on extreme time scales
under a fine tuning assumption on the volumes.
\end{abstract}

% KEYWORDS
% Pirmas kwd is didziosios raides
%
\begin{keyword}[class=AMS]
\kwd{60K35}
\kwd{82C44}
\end{keyword}
\begin{keyword}
\kwd{Random dynamics}
\kwd{random environments}
\kwd{$K$-process}
\kwd{scaling limit}
\kwd{trap models} \kwd{GREM}
\end{keyword}

\end{frontmatter}

%%%%%%%%%%%%%%%%%%%%%%%%%%%%%%%%%%%%%%%%%%%%%%%%%%%%%%%%%%%%%%%%%%%%%%%%%%%%%%%%%%%%
%%%%%%%%%%%%%%%%%%%%%%%%%%%%% INTRO
%%%%%%%%%%%%%%%%%%%%%%%%%%%%%%%%%%%%%%%%%%%%%%%
%%%%%%%%%%%%%%%%%%%%%%%%%%%%%%%%%%%%%%%%%%%%%%%%%%%%%%%%%%%%%%%%%%%%%%%%%%%%%%%%%%%%
%s1 #&#
%s1 ###
\section{Introduction}
\label{intro}

The long time behavior of slow dynamics in random environments and
phenomena like \textit{aging} %{\em localization}
is a research theme of recent interest. Trap models and related
stochastic processes have been proposed as simple models where these
issues can be studied and understood on a rigorous basis. Perhaps the
simplest such models are Markov jump processes on given graphs with
simple symmetric random walks as embedded chains. The mean jump times
at the vertices are random i.i.d. parameters, with
heavy tailed distribution, %to correspond to low temperature
that may be seen as the depths of traps, playing the role of the random
environment. The case of ${\mathbb Z}^d$ was extensively analyzed in
the physics~\cite{knne,kncb} as well as mathematical
literature~\cite{knfin,knacm,knac,knbc}. The case of the
\textit{complete graph} was introduced in~\cite{knbd} as a toy model
for the aging behavior of the REM, and is well
understood~\cite{knbd,knbf,knfm,kng,knbfggm}. The actual REM dynamics
(with Gibbs factors instead of the i.i.d. heavy tailed random
variables) was studied in~\cite{knabg1,knabg2}, where it is shown that
aging is the same as in the complete graph. Refined understanding of
this dynamics on a wide range of time scales\vadjust{\goodbreak} was obtained in
\cite{knabg1,knabg2,knac1,knfl,kng2,knac2}. A natural next step to the
analyzes of the REM is to consider correlated Hamiltonians, namely the
$p$-spin SK models and the GREM.
%%and REM-like
The $p$-spin dynamics was studied in~\cite{knbbc,knbg,knbgs} in a
particular range of time scales and temperature parameters where aging
is the same as in the REM. At present, however, there is no rigorous
results about the GREM dynamics. The only results available are
nonrigorous theoretical results~\cite{knbd,knsn1,knsn2} and concern
trap-like models of this dynamics. In this work we consider one of
these models, namely the GREM-like trap
model introduced by Sasaki and Nemoto~\cite{knsn1}. %We do so by

%f1 #&#
%
%f1 ###
\begin{figure}[b]

\includegraphics{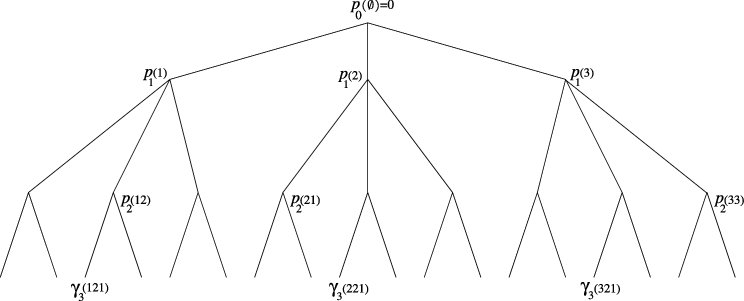}

\caption{A representation of\, ${\mathbb T}^F_3$ with $M_1=M_2=3$ and
$M_3=2$, with coin tossing and waiting time parameters
appearing beside or below a few vertices, respectively.}
\label{figtree}
\end{figure}

Let us first describe the model with a fixed deterministic environment,
which we call the \textit{trap model on a tree}, and come back to the
GREM-like trap model after that. Let $M_1,\ldots,M_k$ be positive
integers, and consider a $k$-level rooted tree whose first generation
has size $M_1$, and such that each vertex in generation $j-1$ has
$M_j$~offspring at generation $j$, $j=2,\ldots,k$. The state space of
our model are the leaves of that tree. The parameters of the model are
as follows. To each leaf vertex we will attach a positive parameter
$\gamma_k$, dependent on the vertex. To interior vertices (not leaves),
we attach probabilities $p_j$, $j=1,\ldots, k-1$, also dependent on the
vertex, all of them positive, except for $p_{\mathrm{root}}$, which
vanishes. See Figure~\ref{figtree} below. The transition mechanism of
the trap model is as follows. Once in a given leaf vertex $x$ of the
tree, the process waits an exponential time with mean $\gamma_k(x)$ and
then jumps. The destination of the jump, another leaf vertex of the
tree---let us call it $y$---is chosen as follows. An ancestor of $x$ on
the tree is first chosen by going up the path from $x$ to the root, and
independently flipping coins whose probabilities of heads are the
$p_j$'s encountered along the way, until tails come up for the first
time. The corresponding stopping vertex is the chosen ancestor; let us
call it $z$. We then choose $y$ uniformly at random among the leaf
vertices descending from $z$. The trap model on a tree is thus fully
described.

The GREM-like trap model is the trap model on a ($k$-level) tree for
which the $\gamma_k$'s as well as the inverse of the $p_j$'s,
$j=1,\ldots, k-1$, are random variables, independent over the vertices,
whose common distribution on a given level $j$ is in the basin of
attraction of a stable distribution with index $\alpha_j$ such that
$0<\alpha_j<1$, $j=1,\ldots, k$. More detailed descriptions of both
the trap model on a tree and the GREM-like trap model are done in
Section~\ref{tree}.

We are interested in the long time behavior of the GREM-like trap model
as the volume diverges. There is of course an issue of how time scales
with the volumes, and how the volumes $M_j$ and the indices $\alpha
_j$, $j=1,\ldots, k$, relate to each other. In this paper we derive a
scaling limit for the process at times of the order of the maxima of
the $\gamma_k$'s---we qualify this time scale as \textit{extreme}. The
$\alpha_j$'s will be taken in strictly increasing order. The volumes
will be related to each other in what we call the \textit{fine tuning}
regime [see~(\ref{eqft}) on Section~\ref{GREM}]. On the extreme time
scale the process is close to equilibrium, so aging does not take place
in this regime. In our setting, aging requires taking a second limit,
after first sending the volume to infinity: the macroscopic time must
then be sent to zero (as discussed, e.g., in~\cite{kng1}); this will be
done in a follow-up paper. Alternatively, we may take a single limit,
with a smaller time scale than the extreme one---this is done
in~\cite{kngg}; see Remark~\ref{rmksn} below.
%Shorter time scales and different choices of the volumes are studied
%in~\cite{kngg}.

A scaling limit for the GREM-like trap model is stated and proved in
Section~\ref{convergence}, under the conditions outlined above; see
Theorem~\ref{scal}. In the same section, we state and prove a general
infinite volume limit result for the trap model on a tree; see
Theorem~\ref{GK}. The proof of Theorem~\ref{scal} is obtained in
Section~\ref{GREM} by verifying the conditions of Theorem~\ref{GK}.

In order to perform the infinite volume limit of the trap model on a
tree, we consider an alternative description of that dynamics, since
the original description does not straightforwardly suggest an infinite
volume version. This is done in Section~\ref{finitevolume}. This
representation, a key
element of the paper, %perhaps its single most original contribution,
immediately suggests an infinite volume limit version of the finite
volume dynamics, introduced
in Section~\ref{GKprocess}.% we introduce such an infinite volume
%dynamics, and in the following Section~\ref{convergence}, we prove a
%convergence
%result of the finite volume process to the infinite volume one---see
%Theorem~\ref{GK}.

%%%%%%%%%%%%%%%%%%%%%%%%%%%%%%%%%%%%%%%%%%%%%%%%%%%%%%%%%%%%%%%%%%%%%%%%%%%%%%%%%%%%
%%%%%%%%%%%%%%%%%%%%%%%%%%%%% MOD
%%%%%%%%%%%%%%%%%%%%%%%%%%%%%%%%%%%%%%%%%%%%%%%
%%%%%%%%%%%%%%%%%%%%%%%%%%%%%%%%%%%%%%%%%%%%%%%%%%%%%%%%%%%%%%%%%%%%%%%%%%%%%%%%%%%%
%s2 #&#
%s2 ###
\section{The model}\label{tree}

We describe the trap model on a tree in detail now. Let us start with
the tree. Throughout, $k$ will be a fixed integer in $\mathbb{N}_{*}:=
\{ 1,2,\ldots\}$. Consider~$k$ numbers $M_{1},\ldots,M_{k}
\in\mathbb{N}_{*}$, sometimes below called volumes, and let
$\mathcal{M}_{j} = \{1, \ldots, M_j\}$,
$\mathcal{M}|_{j}=\mathcal{M}_1\times\cdots\times\mathcal{M}_j$,
$j=1,\ldots,k$. Let us then consider the tree rooted at $\varnothing$
%e2.1 #&#
%e2.1 ###
\begin{equation}
\label{eqtkf} {\mathbb T}^F_k=\bigcup
_{j=0}^k\mathcal{M}|_{j},
\end{equation}
where $\mathcal{M}|_{0}=\{\varnothing\}$. We will use the notation
$x|_j\equiv(x_1,\ldots, x_j)$ for a generic element of
$\mathcal{M}|_j$. We will also use the notation
%e2.2 #&#
%e2.2 ###
\begin{equation}
\label{eqbm} \mathcal{M}|^{j}=\mathcal{M}_j\times\cdots
\times\mathcal{M}_k,
\end{equation}
$1\leq j\leq k$, and let $x|^j\equiv(x_j,\ldots, x_k)$ denote a
generic element of $\mathcal{M}|^{j}$.\vadjust{\goodbreak}

${\mathbb T}^F_k$ is of course a finite tree, and this is emphasized in
the notation by the use of the superscript ``$F$.''
%Later on we will define an analogous process for an infinite tree
%(with $k$ generations), and in that case there will be no superscript.
We understand the root to be at the $0$th generation of ${\mathbb
T}^F_k$, and $x|_j\in \mathcal{M}|_{j}$ to be in its $j$th generation,
$1\leq j\leq k$. We will sometimes use simply $x$ for $x|_k$. Given
$0\leq i<j\leq k$, $x|_i\in\mathcal{M}|_i$ and $x'|_j\in
\mathcal{M}|_j$, we will regard $x|_i$ as an \textit{ancestor} of
$x'|_j$ whenever $x|_i=x'|_i$.

Let $\bar{\mathbb N}_{*}= {\mathbb N}_{*}\cup\{\infty\}$. In the
sections below, we will consider the infinite tree
%e2.3 #&#
%e2.3 ###
\begin{equation}
\label{eqtk} {\mathbb T}_k=\bigcup_{j=0}^k
\bar{\mathbb N}_{*}^j,
\end{equation}
where $\bar{\mathbb N}_{*}^0=\{\varnothing\}$,
%, denoted ${\mathbb T}_k$, with vertex set
with generations and ancestors as in ${\mathbb T}^F_k$.

The dynamics we will consider is a continuous time Markov jump process
on the set of leaves of
${\mathbb T}^F_k$, namely its $k$th generation $\mathcal{M}|_{k}$.

Let us describe the transition mechanism of the process. There will be
a set of parameters for that. In order to distinguish this finite tree
description from the later infinite tree one (to be presented in
Section~\ref{GKprocess} below) on the one hand, and to emphasize the
analogy between the two cases on the other hand, we continue resorting
to the use the superscript ``$F$'' for the set of parameters of the
finite volume process as well.

For $j = 1,\ldots,k-1$, let
%e2.4 #&#
%e2.4 ###
\begin{equation}
\label{eqpjf} p^F_j\dvtx  \mathcal{{M}}|_{j}
\rightarrow(0,1)
\end{equation}
and
%e2.5 #&#
%e2.5 ###
\begin{equation}
\label{eqgkf} \gamma^F_k\dvtx  \mathcal{{M}}|_{k}
\rightarrow(0,\infty).
\end{equation}
For $x \in\mathcal{{M}}|_{k}$, let $g_{x} \in\{0,1, \ldots,k-1\}$
be a random
variable such that
%e2.6 #&#
%e2.6 ###
\begin{equation}
\label{eta} P(g_{x} = i) = \bigl[1 - p^F_{i}(x|_{i})
\bigr] \prod_{j = i + 1}^{k - 1}p^F_{j}(x|_{j}),
\end{equation}
where by convention $\Pi_{j=k}^{k-1}p^F_{j}(x|_j) = 1$ and $p^F_{0}
\equiv0$.

Let $Z^F_{k}$ be a continuous time Markov chain on $\mathcal
{{M}}|_{k}$ as follows. When $Z^F_{k}$ is at $x\in\mathcal{M}|_k$, it
waits an exponential time of mean $\gamma^F_{k}(x)$ and then jumps as
follows. It first looks at a copy $g'_{x}$ of $g_{x}$ (at each time
independent of the copies looked at previously). If $g'_{x} = j$, then,
letting $a_x(j)$ denote the (only) ancestor of $x$ on generation $j$ of
${\mathbb T}^F_k$ [namely $a_x(j)=x|_{j}$], $Z^F_k$ jumps uniformly at
random to one of the \textit{descendants} of $a_x(j)$ in
$\mathcal{{M}}|_{k}$. In other words, given that $g'_{x} = j$, then the
coordinates $x|_{j}(=a_x(j))$ of $x$ are left unchanged, and the
remainder coordinates are chosen uniformly at random on
$\mathcal{M}|^{j+1}$. We may then say that the transition distribution
of the jump chain of $Z^F_k$ from $x$ is the uniform distribution on
the descendants of an ancestor of $x$ whose generation is randomly
chosen according to the distribution of $g_x$.

%re2.1 #&#
%
\begin{rmk}\label{coins}
We may understand the random variable $g_x$ as follows. Let us attach
coins to the sites of the tree that are not leaves, namely, the points
of $\bigcup_{j=0}^{k-1}\mathcal{M}|_{j}={\mathbb
T}^F_k\setminus\mathcal{M}|_{k}$, in such a way that the probability of
heads of the coin at site $x|_{j} \in\mathcal{{M}}|_{j}$ is
$p^F_{j}(x|_{j}), j = 1,\ldots,k-1$; see Figure~\ref{figtree}. The coin
of the root has probability $p^F_{0} = 0$ of turning up heads. When it
decides to jump from site $x \in\mathcal{{M}}|_{k}$, $Z^F_k$ first
flips successively the coins of $x|_{k-1}, \ldots, x|_{1}$,
$\varnothing$ (in that order) until it gets tails for the first time,
and then it stops at the respective site. Notice that this procedure is
almost surely well defined since $p^F_{0}=0$. Given that $x|_{j}$
%, j = 0,\ldots,k-1$ (where $x|_{0}\equiv\varnothing$),
was the stopping site of the procedure,
%(in the above mentioned order),
then $g_x=j$.\looseness=1
\end{rmk}

% \includegraphics[width=16cm]{tree.pdf}
%$M_3=2$, with coin tossing as well as waiting time parameters
%represented beside a few vertices, respectively.}

%de2.2 #&#
%
\begin{defin}
We call $Z^F_{k}$ a trap model on ${\mathbb T}^F_k$, or $k$-level trap
model, with \textit{waiting time} parameter $\gamma^F_{k}$ and
\textit{activation} parameters $(p^F_{j})_{j=1}^{k-1}$, and write
\[
Z^F_{k} \sim TM\bigl({\mathbb T}^F_k;
\gamma^F_{k}; \bigl(p^F_{j}
\bigr)_{j=1}^{k-1}\bigr).
\]
\end{defin}

Our main motivation in considering this model is in the particular case
where the parameters are related to the following random variables. For
$j = 1,\ldots,k$, let $\tau_{j}:= \{ \tau_{j}(x|_j); x|_j \in
\mathcal{M}|_{j}\}$ be an i.i.d. family of positive random variables in
the domain of attraction of an $\alpha_{j}$-stable law. Now consider
the $k$-level trap model, with~\textit{waiting time} parameters
$\gamma^F_{k}(x|_k)\equiv\tau_{k}(x|_k)$ and \textit{activation}
parameters $p^F_{j}(x|_j)\equiv1/(1+\tau_{j}(x|_j)), j=1,\ldots,k-1$.
We call this model the \textit{GREM-like trap model} on ${\mathbb
T}^F_k$
with parameters $\tau_{j}, j=1,\ldots,k$. %It was introduced and
%studied in~\cite{knsn1,knsn2}.
We state a scaling limit result for this model in
Section~\ref{convergence} below. In the next two sections we present
supporting material for that result, as anticipated at the end of the
\hyperref[intro]{Introduction}.

%%%%%%%%%%%%%%%%%%%%%%%%%%%%%%%%%%%%%%%%%%%%%%%%%%%%%%%%%%%%%%%%%%%%%%%%%%%%%%%%%%%%
%%%%%%%%%%%%%%%%%%%%%%%%%%%%% REP
%%%%%%%%%%%%%%%%%%%%%%%%%%%%%%%%%%%%%%%%%%%%%%%
%%%%%%%%%%%%%%%%%%%%%%%%%%%%%%%%%%%%%%%%%%%%%%%%%%%%%%%%%%%%%%%%%%%%%%%%%%%%%%%%%%%%
%s3 #&#
%s3 ###
\section{A representation of the $k$-level trap model}
\label{finitevolume}

In this section we will inductively construct a process $X^F_{k}$ on
$\mathcal{{M}}|_{k}$, under a particular choice of whose parameters it
is a version of the $k$-level trap model of last section. As explained
in the \hyperref[intro]{Introduction}, this particular version will
help us to formulate the infinite volume limit of the latter model.

The process of this section will involve a set of parameters
$\gamma^F_j\dvtx  \mathcal{{M}}|_{j} \rightarrow\mathbb(0, \infty), j
=1,\ldots,k $, for given $M_{1},\ldots,M_{k} \in\mathbb{N}_{*}$.

In order to have our inductive construction go smoothly, we introduce
an auxiliary process $Y^F_{j}$, for bookkeeping reasons
only,\vspace*{-1pt} as will be explained below. We will then have pairs
$(X^F_{j},Y^F_{j})$, $j=1,\ldots,k$. (The auxiliary process will not be
needed in the infinite volume version of $X^F_{k}$ to be introduced in
Section~\ref{GKprocess}.) We first define the process $(X^F_1,Y^F_1)$.
$X^F_1$ is a continuous time Markov chain on
$\mathcal{{M}}|_{1}(=\mathcal{{M}}_{1})$ that, when at
$x_1\in\mathcal{{M}}|_{1}$, waits an exponential time of mean $\gamma
^F_1(x_1)$ and then jumps uniformly to a site in $\mathcal{{M}}|_{1}$.
We will construct $X^F_{1}$ in the following way.

Let $\mathcal{N}_{1}=\{(N_r^{(x_1,1)})_{r\geq0}, x_1 \in\mathbb{N}_*\}$
be i.i.d. Poisson processes of rate 1, and let $\sigma_{i}^{x_{1},1}$
be the $i$th mark of $N^{(x_1,1)}$ (viewed as a point process), $i
\geq1$. We will call $\mathcal{S}^F_{1} = \{\sigma_{i}^{(x_{1},1)};
x_{1} \in\mathcal{M}_{1}, i \geq1 \}$ the set of \textit{marks of the
first level} of $X^F_{k}$. Let $\mathcal{T}_{1}=\{T_{s}^{(1)}, s
\in\mathbb{R}^{+}:= [0, \infty) \}$ be i.i.d. exponential random
variables of rate 1. $\mathcal{N}_{1}$~and~$\mathcal{T}_{1}$ are
assumed independent.

For $s \in\mathcal{S}^F_{1}$, let $\xi^F_{1}(s) = x_1$ if $s =
\sigma_{j}^{x_1,1}$ for some $x_{1} \in\mathcal{M}_{1}$ and
$j \geq1$. %; in words, $\xi^F_{1}(s)$ is the label asssociated to $s
Notice that $\xi^F_{1}$ is well defined almost surely. Let us now
define a measure $\mu^F_{1}$ on $\mathbb{R}^{+}$ as follows: $\mu^F_1
(\{ s \}) = \gamma^F_{1}(\xi^F_1(s)) T_{s}^{(1)}$ if $s
\in\mathcal{S}^F_{1}$ and $\mu^F_1 (\mathbb{R}^{+}
\setminus\mathcal{S}^F_{1}) = 0$.

%re3.1 #&#
%
\begin{rmk}
We note that $\xi^F_1(s), s \in\mathcal{S}^F_{1}$, are
i.i.d. uniform random variables in $\mathcal{M}_{1}$.
\end{rmk}

For $r \geq0$, let
%e3.1 #&#
%e3.1 ###
\begin{equation}
\Gamma^F_{1}(r):= \mu^F_1
\bigl([0,r]\bigr).
\end{equation}
For $t \geq0$, let
%e3.2 #&#
%e3.2 ###
\begin{equation}
\varphi^F_{1}(t):= \bigl(\Gamma^F_{1}
\bigr)^{-1}(t) = \inf\bigl\{ r \geq0; \Gamma^F_{1}(r)
> t \bigr\}
\end{equation}
be the (right continuous) inverse of $\Gamma^F_{1}$.

Let us recall that $\bar{\mathbb{N}}_{*}=\{1,2,\ldots,\infty\}$. We
define the process $(X^F_{1}, Y^F_{1})$ on $(\bar
{\mathbb{N}}_{*},\mathbb{R}^{+})$ as follows. For $t \geq0$,
%e3.3 #&#
%e3.3 ###
\begin{equation}
\bigl(X^F_1, Y^F_1\bigr)
(t)= \bigl(\xi^F_1\bigl(\varphi^F_{1}(t)
\bigr), \varphi^F_{1}(t)\bigr).
\end{equation}
Let us suppose
$(X^F_{j},Y^F_{j})$ is defined for $j = 1,\ldots,l-1$, $l\leq k$.

%de3.2 #&#
%
\begin{defin}\label{citrap}
An interval $I \subset\mathbb{R}^{+}$ is a constancy interval of
$(X^F_{j},Y^F_{j})$
if $(X^F_{j},Y^F_{j})$ is constant over $I$, that is,
%e3.4 #&#
%e3.4 ###
\begin{equation}
\bigl(X^F_{j}, Y^F_{j}\bigr) (r) =
\bigl(X^F_{j},Y^F_{j}\bigr) (s)\qquad\mbox{for all } r,s \in I
\end{equation}
and $I$ is maximal with that property.
\end{defin}

The maximality condition and right continuity of $(X^F_{j},Y^F_{j})$
implies that
$I = [a,b)$ for some $0 \leq a < b$. We are now ready to define
$(X^F_{l}, Y^F_{l})$
for $2\leq l \leq k$.

Let $\mathcal{I}^F_{l-1}$ be the collection of constancy intervals of
$(X^F_{l-1}, Y^F_{l-1})$. Let also
$\mathcal{N}_{l}=\{(N_r^{(x_l,l)})_{r\geq0}, x_l \in\mathbb{N}_* \}$ be
i.i.d. Poisson processes of rate 1. Let $\sigma_{i}^{x_{l},l}$ the
$i$th mark of $N^{(x_l,l)}, i \geq1$. We will call
$\mathcal{S}^F_{l}=\{\sigma_{i}^{(x_{l},l)}; x_{l}\in\mathcal{M}_{l},
i\geq1\}$ the set of Poisson marks of the $l$th level, and
$\mathcal{R}^F_{l}=\{a; I=[a,b)$ and $I\in\mathcal{I}^F_{l-1}\} $ the
set of extra marks of the $l$th level. Notice that $\mathcal{R}^F_{l}$
is the set of left endpoints of intervals of $\mathcal{I}_{l-1}$. We
call $\mathcal{S}^F_{l} \cup\mathcal{R}^F_{l}$ the set of marks of the
$l$th level; see Figure~\ref{figconst}. Let
$\mathcal{T}_{l}=\{T_{s}^{(l)}, s \in\mathbb{R}^{+}\}$ be i.i.d.
exponential random variables of rate 1. $\mathcal{N}_{l}$ and
$\mathcal{T}_{l}$ are assumed independent and are independent of
$\mathcal{N}_j$ and $\mathcal{T}_{j}$ for $j < l$.

%f2 #&#
%
%f2 ###
\begin{figure}

\includegraphics{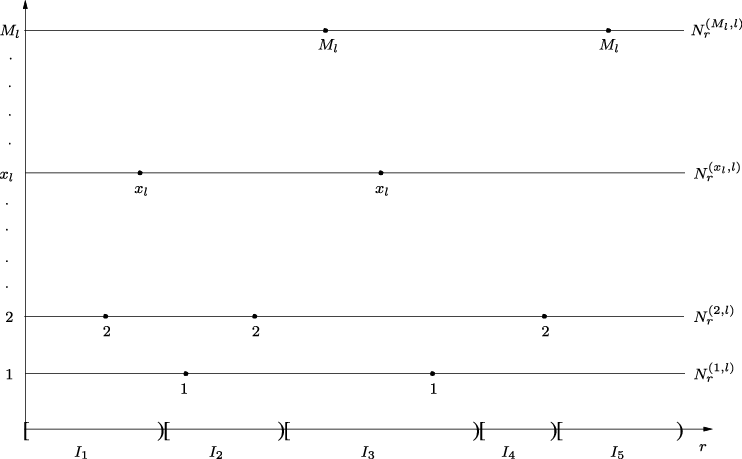}

\caption{Representation of the timelines of the Poisson point
processes entering the definition of~$X^F_{l}$, one for each $x\in
\mathcal{M}_l$.
Another ingredient are the constancy intervals of~$X^F_{l-1}$, here
successively represented in the $x$-axis as $I_1,\ldots,I_5$.}\label{figconst}
\end{figure}

To each $s \in\mathcal{R}^F_{l}$, we associate a uniform random
variable $U_{l}(s)$
on $\{ 1,\ldots, M_{l} \}$. Assume that $\{ U_{l}(s), s \in\mathcal
{R}^F_{l}, l \geq1 \}$
are mutually independent and independent of the other random variables
in the model.
%For $s \in\mathcal{S}^F_{l}$, let
Let
%e3.5 #&#
%e3.5 ###
\begin{equation}
\label{xi} \xi^F_{l}(s) = \cases{ x_l, &
\quad if $s=\sigma_{j}^{(x_l,l)}$ for some $x_{l} \in
\mathcal{M}_{l}$ and $j \geq1$,
\cr
U_{l}(s), &\quad if $s
\in\mathcal{R}^F_{l}$.}
\end{equation}

We will call $\xi^F_{l}(s)$ the \textit{label} of $s\in\mathcal
{R}^F_{l}\cup\mathcal{S}^F_{l}$, $1\leq l\leq k$ (where
$\mathcal{R}^F_{1}=\varnothing$).

Notice that in the first case of~(\ref{xi}) above $s \in\mathcal
{S}^F_{l}$, and that $\xi^F_{l}$ is well defined almost surely. Let us
now define a measure $\mu^F_{l}$ on $\mathbb{R}^{+}$ as follows:
%e3.6 #&#
%e3.6 ###
\begin{equation}
\mu^F_l\bigl(\{s\}\bigr)=\gamma^F_{l}
\bigl(X^F_{l-1}(s),\xi^F_{l}(s)
\bigr)T_{s}^{(l)}\qquad\mbox{if }s\in\mathcal{S}^F_{l}
\cup\mathcal{R}^F_{l}
\end{equation}
and $\mu^F_l (\mathbb{R}^{+} \setminus(\mathcal{S}^F_{l} \cup
\mathcal{R}^F_{l})) = 0 $. Notice that $\mathcal{S}^F_{l}
\cap\mathcal{R}^F_{l} = \varnothing$ almost surely.

%re3.3 #&#
%
\begin{rmk}
We note that $\xi^F_j(s), s \in\mathcal{S}^F_{j}\cup\mathcal
{R}^F_{j}$, are i.i.d. uniform random variables in
$\mathcal{M}_{j}, j = 1,\ldots,l$.
\end{rmk}

For $r \geq0$, let
%e3.7 #&#
%e3.7 ###
\begin{equation}
\Gamma^F_{l}(r):= \mu^F_l
\bigl([0,r]\bigr).
\end{equation}
For $t \geq0$, let
%e3.8 #&#
%e3.8 ###
\begin{equation}
\varphi^F_{l}(t):= \bigl(\Gamma^F_{l}
\bigr)^{-1}(t) = \inf\bigl\{ r \geq0\dvtx  \Gamma^F_{l}(r)
> t \bigr\}
\end{equation}
be the inverse of $\Gamma^F_{l}$.

We define the process $(X^F_{l},Y^F_{l})$ on $(\mathcal{{M}}|_{l},
\mathbb{R}_{+}^{l})$ as follows. For $t \geq0$,
%e3.9 #&#
%e3.9 ###
\begin{equation}
\bigl(X^F_l, Y^F_l\bigr) (t) =
\bigl( \bigl(X^F_{l-1}\bigl(\varphi^F_{l}(t)
\bigr),\xi^F_{l}\bigl(\varphi^F_{l}(t)
\bigr) \bigr); \bigl(Y^F_{l-1}\bigl(\varphi^F_{l}(t)
\bigr), \varphi^F_{l}(t) \bigr) \bigr);%, \mbox{ if } \varphi^F_{l}(t)
\end{equation}
see Figure~\ref{figclock}.

%f3 #&#
%
%f3 ###
\begin{figure}%[htb]

\includegraphics{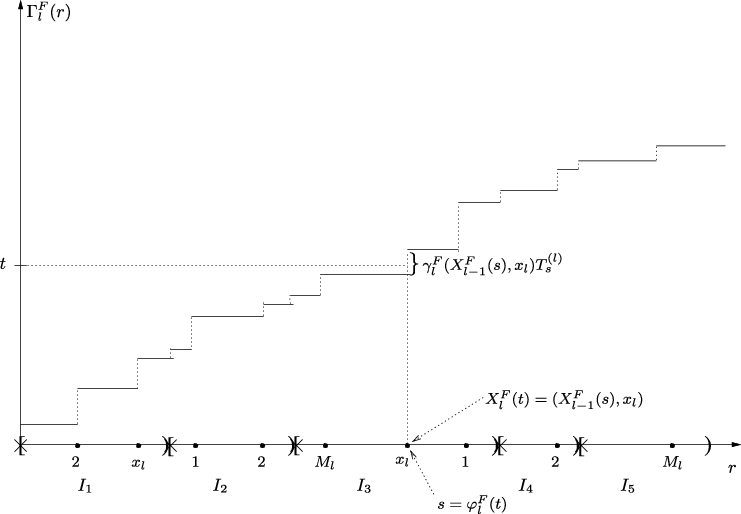}

\caption{Illustration of the construction of $X^F_{k}$ from the
constancy intervals $I_1=[a_1,b_1),\ldots,$ $I_5=[a_5,b_5)$ of
$X^F_{k-1}$ on $x$-axis, Poisson
marks and extra marks [here represented as crosses on left endpoints of
the constancy intervals, with (not shown) labels $U_l(a_1),\ldots,U_l(a_5)$,
resp.]. The Poissonian points may be seen as projected down from
timelines of Figure~\protect\ref{figconst}. The point ``$s$'' in the $x$-axis
equals $\varphi^F_l(t)$.}\label{figclock}
\end{figure}

%re3.4 #&#
%
\begin{rmk}\label{prop}
We note that each interval $I$ of $\mathcal{I}^F_{j}, j \geq2$ can be
identified with a jump of $\Gamma^F_{j}$, that is, $\mathcal{I}^F_{j} =
\{ [\Gamma^F_{j}(r-), \Gamma^F_{j}(r))\ne\varnothing; r \geq0 \} = \{
I^F_j(s):= [\Gamma^F_{j}(s-), \Gamma^F_{j}(s)); s \in
\mathcal{S}^F_{j}\cup\mathcal{R}^F_{j}\}$, and the lengths of the
intervals of $\mathcal{I}^F_{j}$, namely $\{ |I^F_j(s)|, s
\in\mathcal{S}^F_{j} \cup\mathcal{R}^F_{j}\}$, are independent
exponential random variables with means $\{ \gamma^F_{j}(X^F_{j-1}(s),
\xi^F_{j}(s)), s \in\mathcal{S}^F_{j}\cup\mathcal{R}^F_{j} \}$,
respectively.
\end{rmk}

At this point we may observe that our interest is in $X^F_{k}$; as
anticipated above, $Y^F_{k}$ is introduced for bookkeeping purposes,
solely for the convenience of having the property mentioned in
Remark~\ref{prop}. In Section~\ref{GKprocess} below we will introduce
an infinite volume version of $X^F_{k}$ for which the respective
version of $Y^F_{k}$ will not be needed explicitly, and thus not
explicitly introduced.

%re3.5 #&#
%
\begin{rmk}\label{geonumber}
We\vspace*{-1pt} note that the number of marks of $\mathcal{S}^F_{j}$
in each interval $ I \in\mathcal{I}^F_{j-1}$ (when integrated with
respect to the exponential interval length; see Remark~\ref{prop}
above) is a geometric random variable\footnote{In this paper, we call a
geometric random variable one whose probability function is given by
$p^n(1-p)$, $n=0,1,\ldots,$ where $p$ is a parameter in $(0,1)$, and
whose mean is thus in terms of $p$ given by $(1-p)/p$.} with mean
$M_{j}\gamma^F_{j-1}(X^F_{j-1}(s))$, where $s \in\mathcal{S}^F_{j-1}
\cup\mathcal{R}^F_{j-1}$ is such that $I = I^F_{j-1}(s)$, and that each
such interval has exactly one mark of $\mathcal{R}^F_{j}$ at its left
end. So, the total number of marks (of $\mathcal{S}^F_{j}\cup
\mathcal{R}^F_{j}$) within $I$ is the above mentioned geometric
variable plus one.
\end{rmk}

%de3.6 #&#
%
\begin{defin}\label{klt}
We call $X^F_{k}$ defined above the trap model on ${\mathbb T}^F_k$, or
\mbox{$k$-}level trap model, with parameter set
$\underline{\gamma^F_{k}} = \{ \gamma^F_{i}; i = 1,\ldots,k \}$.
Notation: $X^F_{k} \sim TM({\mathbb T}^F_k; \underline{\gamma^F_{k}})$.
\end{defin}

We now make the connection between the models of Sections~\ref{tree}
and~\ref{finitevolume}, a key result of this paper, which in particular
establishes that the latter is a representation of the former under the
appropriate relationship of their respective set of parameters, thus
justifying the common terminology.
%le3.7 #&#
%
\begin{Lem}\label{lmrep}
Let $X^F_{k}$ be as above and $Z^F_{k}$ be as in Section~\ref{tree},
that is,
\[
Z^F_{k} \sim TM\bigl(\mathcal{{M}}|_{k};
\gamma^F_{k};\bigl(p^F_{j}
\bigr)_{j=1}^{k-1}\bigr).
\]
Suppose
%e3.10 #&#
%e3.10 ###
\begin{equation}
\label{p} p^F_{j}(x|_{j}):=
\frac{1}{1 + M_{j+1}\gamma^F_{j}(x|_{j})}
\end{equation}
for all $x|_{j} \in\mathcal{{M}}|_{j}$ and $j =1,\ldots,k-1$.
Then $X^F_{k}$ and $Z^F_{k}$ have the same distribution.
\end{Lem}

\begin{pf}
We begin with the following remark concerning $Z^F_k$, which follows
immediately from the construction of that process in
Section~\ref{tree}.

%re3.8 #&#
%
\begin{rmk} \label{ze}
Let $Z^F_{k,i}, i = 1,\ldots,k$, be the $i$th coordinate of $Z^F_{k} =
(Z^F_{k,1},\break \ldots, Z^F_{k,k})$, and let $Z^F_{k}|_j = (Z^F_{k,1},
\ldots,Z^F_{k,j}), j = 1,\ldots,k$. As pointed out in
Section~\ref{tree} above, the jump chain of $Z^F_{k}$, let us call it
$J^F_{k} = (J^F_{k,1},\ldots,J^F_{k,k})$, with $J^F_{k}|_{j} =
(J^F_{k,1}, \ldots,J^F_{k,j})$, $j = 1,\ldots,k$, can be described in
terms of the sucessive flips of the coins of $J^F_{k}|_{k-1}, \ldots,
J^F_{k}|_{1}$; see Remark~\ref{coins}. After $n$ jumps of $J^F_{k}$,
let us consider the event $A_{k-1}(n) = \{\mbox{flip of the coin of
}J^F_{k}|_{k-1}(n)\mbox{ results in}\break \mbox{heads} \}$. In terms of the random
variable $g_{J^F_{k}(n)}$, we have $A_{k-1}(n)=\{
g_{J^F_{k}(n)}<k-1\}$. We now remark that, given $J^F_{k}(n)=x|_k$ and
$A_{k-1}(n)$, the distribution of the jump from $J^F_{k}|_{k-1}(n)$ is
the same as that from $J^F_{k-1}(n)$ given $J^F_{k-1}(n)=x|_{k-1}$.
Since the distribution of a jump from $J^F_{k,k}(n)$ is always uniform
in $\mathcal{M}_{k}$ independent from anything else, we have that,
given $J^F_{k-1}(n)=x|_k$ and $A_{k-1}(n)$, the distribution of the
jump from $J^F_{k}(n)$ is the same as the joint distribution of the
jump from $J^F_{k-1}(n)$ given $J^F_{k-1}(n)=x|_{k-1}$, and an
independent uniform random variable in $\mathcal{M}_{k}$.
\end{rmk}

For $k = 1$, we remark that the transition probabilities of $X^F_{1}$
are always uniform in $\mathcal{M}_{1}$, since the labels of the
successive points of $\mathcal{S}^F_{1}$ have this property and are
independent of each other. So, $X^F_{1}$ and $Z^F_{1}$ are processes
with the same state space and transition probabilities, and the holding
times are clearly matched. The result follows for $k = 1$.

We now proceed by induction on $k$. Let us suppose that the result
holds for $k = K-1$ for some $K \geq2$. To show that $X^F_{K} \sim
Z^F_{K}$, it is enough to identify the transition mechanisms of both
processes. Let us thus fix a time $t\geq 0$ and a point
$x|_{K}\in\mathcal{{M}}|_{K}$. Given that either process is at $x|_K$
at time $t$, then both jump times are exponentially distributed with
mean $\gamma^F_{K}(x|_K)$. So far, we have an identification. Now let
us identify the jump mechanisms of both processes.

Let ${\mathfrak N}_t$ denote the number of jumps of $Z^F_{K}$ up to
time $t$. As discussed in Remark~\ref{ze} above, given
$Z^F_{K}(t)=x|_K$ and $A_{K-1}({\mathfrak N}_t)$ we have that
${Z}^F_{K}|_{K-1}(t)$ jumps as $Z^F_{K-1}(t)$ given
$Z^F_{K-1}(t)=x|_{K-1}$, and $Z^F_{K,K}(t)$ jumps uniformly in
$\mathcal{M}_{K}$, and the jumps of ${Z}^F_{K}|_{K-1}(t)$ and
$Z^F_{K,K}(t)$ are independent. Let us identify a \textit{coin tossing}
mechanism in the jump of $X^F_{K}$. Let $I$ be the interval of
$\mathcal{I}^F_{K-1}$ containing $\varphi_K(t)$, and let
$I'=(\varphi_K(t),\infty)\cap I$. The lack of memory of the exponential
distribution of $I$ (see Remark~\ref{prop} above) implies that, given
that $X^F_{K}(t)=x|_K$, then $|I'|$ is an exponential random variable
with mean $\gamma^F_{K-1}(x|_{K-1})$.
%For that, let us define for $I \in
%marks of $\mathcal{S}^F_{K}
%$N_I = |\bar{I} \cap(\mathcal{S}^F_{K} \cup\mathcal{R}^F_{K})|$.
%We recall from Remark \ref{geonumber} that $N_I$ equals $1+$a geometric
%random variable with mean $M_{K}\gamma^F_{K-1}(X^F_{K -1}(s))$, where
%$1 + M_{K}\gamma^F_{K-1}(\bar x_{K,K-1})$, where $\bar x_{K,K-1}=(x_1,
%$s \in I$ (recall $X^F_{K -1}$ is constant on $I$).
%$s \in\mathcal{S}^F_{K-1}\cup\mathcal{R}^F_{K-1}$ is such that $I =
%I^F_{K-1}(s)$.
The mechanism in the $X^F_{K}$~process that plays the role of (the
first) coin tossing is whether or not
%the point of $\mathcal{S}^F_{K-1} \cup\mathcal{R}^F_{K-1}$ on the
%current constancy interval of $X^F_{K-1}$
$I'$ contains at least one (Poisson) mark. Let us call $\bar{A}_{K-1}$
the event that $I'$ contains no mark. This corresponds to the coin at
$x|_{K-1}$ turning up heads. This means that ${X}^F_{K}|_{K-1}$ will
take a jump, which we identify as a jump of $X^F_{K-1}$, independent of
the (uniform in~$\mathcal{M}_{K}$) accompanying jump of $X^F_{K,K}$.

At this point we should stress that a particular element of the
construction of $X^F_{K}$ plays a key role in this identification,
namely, the inclusion of the marks of~$\mathcal{R}^F_{K}$. This
guarantees\vspace*{-1pt} that each interval of constancy $I$ of
$\mathcal{I}^F_{k-1}$ gets at least one mark.
%(placed actually at its right endpoint, which does not indeed
%belong to it, but we should think of this mark as attached to it).
Without these marks, the jump of ${X}^F_{K}|_{K-1}$ %from $\bar
%x_{K,K-1}$
might not coincide with that of $X^F_{K -1}$---that would happen if
(and only if) the first interval of $\mathcal{I}^F_{K-1}$ neighboring
$I$
%$I=I_{K-1}(\varphi_K(t))$
to the right had got no mark.

By the induction hypothesis, $X^F_{K-1} \sim Z^F_{K-1} \sim
{Z}^F_{K}|_{K-1}$; clearly $X^F_{K,K}
\sim Z^F_{K,K}$. We close the argument in two steps. The first one is
to show that
%e3.11 #&#
\begin{eqnarray}\label{A{k-1}}
P \bigl(\bar{A}_{K-1}\llvert
X^F_{K}(t)=x|_K \bigr) &=& \frac{1}{1+M_{K}\gamma^F_{K-1}(x|_{K-1})}
\nonumber\\[-8pt]\\[-8pt]
&=& P \bigl(A_{K-1}({\mathfrak N}_t)\llvert
Z^F_{K}(t)=x|_K \bigr)\nonumber
\end{eqnarray}
and then use the hypothesis. But it follows from our discussion above
and the construction of $X^F_{K}$ that the left-hand side
of~(\ref{A{k-1}}) equals $P(N'_{T'}=0)$, where $N'$~is a Poisson
process of rate $M_k$ and $T'$ an independent exponential random
variable with mean $\gamma^F_{K-1}(x|_{K-1})$, and a simple computation
yields the result.

And the last step is to argue that, given $\bar{A}_{K-1}^{c}$, then
${X}^F_{K}|_{K-1}$ does not move, and we have only the uniform in
$\mathcal{M}_{K}$ jump
of $X^F_{K,K}$, which agrees with the corresponding move of $Z^F_{K}$
given $A_{K-1}^{c}$.%,which is immediate.
\end{pf}

%%%%%%%%%%%%%%%%%%%%%%%%%%%%%%%%%%%%%%%%%%%%%%%%%%%%%%%%%%%%%%%%%%%%%%%%%%%%%%%%%%%%
%%%%%%%%%%%%%%%%%%%%%%%%%%%%% KP
%%%%%%%%%%%%%%%%%%%%%%%%%%%%%%%%%%%%%%%%%%%%%%%
%%%%%%%%%%%%%%%%%%%%%%%%%%%%%%%%%%%%%%%%%%%%%%%%%%%%%%%%%%%%%%%%%%%%%%%%%%%%%%%%%%%%
%
%corrected on April 2 2013
%
%s4 #&#
%s4 ###
\section{$K$-process on a tree}\label{GKprocess}

$K$-processes on $\bar{\mathbb N}_{*}$ were introduced in~\cite{knfm}
in the study of limits of trap models in the complete graph. They
appear as scaling limits of the REM-like trap model in the complete
graph. Below we introduce an extension of that model to a model on
$\bar{\mathbb N}^k_{*}$, which we will view as the leaves of a tree
with $k$ generations,
as done similarly in the previous sections. % (see paragraph of~(
As anticipated in the \hyperref[intro]{Introduction} and established in
the next section, the process of this section turns up in limit results
for the processes of the previous sections as volume diverges.

Let $\gamma_j\dvtx  \mathbb{N}_{*}^{j} \rightarrow(0,\infty)$,
$j=1,\ldots,k $, be such that, making
%e4.1 #&#
%e4.1 ###
\begin{equation}
\label{gamma} \bar{\gamma}_{j}(x|_{j}):=
\gamma_{1}(x|_1)\times\gamma_{2}(x|_2)
\times\cdots\times\gamma_{j}(x|_{j}),
\end{equation}
we have
%e4.2 #&#
%e4.2 ###
\begin{equation}
\label{eqfin} \sum_{x|_j\in\mathbb{N}_{*}^{j} }\bar{\gamma}_{j}(x|_{j})<
\infty.
\end{equation}

%then $\bar{\gamma}_{j}$ is a finite measure on $\mathbb{N}_{*}^{j}$.
%Here we
%abuse of notation by identifying $\bar{\gamma}_{j}(\{\cdot\})$ with
%$\bar{\gamma}_{j}(\cdot)$.

We will construct a process $X_{k}$ on $\bar{\mathbb{N}}_{*}^{k}$
inductively, similarly as in Section~\ref{finitevolume}. This will be a
c\`adl\`ag process, similar to the ones we dealt with so far. First we
define the process $X_1$. It is a continuous time Markov chain on
$\bar{\mathbb{N}}_{*}$ described as follows.

Let $\mathcal{N}_{1}=\{(N_r^{(x_1,1)})_{r\geq0}, x_1 \in\mathbb
{N}_{*}\}$ be
i.i.d. Poisson processes of rate 1. Let $\sigma_{i}^{x_{1},1}$ be the $i$th
mark of $N^{(x_1,1)}, i \geq1$. We will call
$\mathcal{S}_{1} = \{\sigma_{i}^{(x_{1},1)}; x_{1} \in\mathbb
{N}_{*}, i \geq1 \}$
the set of marks of the first level of $X_k$.
Let $\mathcal{T}_{1}=\{T_{s}^{(1)}, s \in\mathbb{R}^{+} \}$
be i.i.d. exponential random variables of rate 1. $\mathcal{N}_{1}$ and
$\mathcal{T}_{1}$ are assumed independent.

For $s \in\mathcal{S}_{1}$, let $\xi_{1}(s) = x_1$ if $s = \sigma
_{i}^{x_1,1}$
for some $x_1 \in\mathbb{N}_{*}$ and $i \geq1$. Notice that $\xi
_{1}$ is well\vadjust{\goodbreak}
defined almost surely. Let us now define a measure $\mu_{1}$ on
$\mathbb{R}^{+}$
as follows:
%e4.3 #&#
%e4.3 ###
\begin{equation}
\mu_1 \bigl(\{ s \}\bigr) = \gamma_{1}\bigl(
\xi_{1}(s)\bigr) T_{s}^{(1)},\qquad\mbox{if }s \in
\mathcal{S}_{1}\quad\mbox{and}\quad\mu_1 \bigl(
\mathbb{R}^{+} \setminus\mathcal{S}_{1}\bigr) = 0.
\end{equation}

For $r \geq0$, let
%e4.4 #&#
%e4.4 ###
\begin{equation}
\Gamma_{1}(r):= \mu_1 \bigl([0,r]\bigr)
\end{equation}
and, for $t \geq0$, let
%e4.5 #&#
%e4.5 ###
\begin{equation}
\varphi_{1}(t):= \Gamma_{1}^{-1}(t) = \inf
\bigl\{ r \geq0\dvtx  \Gamma_{1}(r) > t \bigr\}
\end{equation}
be the inverse of $\Gamma_{1}$.

%re4.1 #&#
%
\begin{rmk}
\label{atom}
Notice that $\mu_1$ is almost surely a purely atomic
measure whose set of atoms, $\mathcal{S}_{1}$, is a.s. countable and
dense in $\mathbb{R}^{+}$. Moreover, from Lemma~\ref{fin} below, it is
a.s. $\sigma$-finite. These properties imply that $\Gamma_{1}\dvtx
\mathbb{R}^{+} \to\mathbb{R}^{+}$ is a.s. strictly increasing and that
its range $\Gamma_{1}(\mathbb{R}^{+})$ is an uncountable set (since it
is the image of an uncountable set, $\mathbb{R}^{+}$, by a 1 to 1 map)
of Lebesgue measure zero. It follows from this and the independence and
continuity of its constituents that any fixed deterministic $r$ is a.s.
a continuity point of $\Gamma_{1}$. It may also be checked that
$\varphi_{1}\dvtx  \mathbb{R}^{+} \to\mathbb{R}^{+} $ is a.s.
continuous.
\end{rmk}

In order to make the processes to be defined below c\`adl\`ag, we need
the following general
definition.

%de4.2 #&#
%
\begin{defin}
Given a function $f\dvtx {\mathbb R}^+\to{\mathbb R}^+$ and
$t\in{\mathbb R}$, we say that $f$~is \textit{upper locally constant}
at $t$ if there exists an $\epsilon>0$ such that $f$ is constant
in $[t,t+\epsilon]$; let $\mathrm{ULC}_f$ denote the set
$\{t\in{\mathbb R}^+\dvtx f$ is upper locally constant at~$t\}$.
\end{defin}

We define $X_{1}$ on $\bar{\mathbb{N}}_{*}$ as follows. For $t \geq0$
%e4.6 #&#
%e4.6 ###
\begin{equation}
\label{defk} X_1(t) = \cases{ \xi_{1}\bigl(
\varphi_{1}(t)\bigr), &\quad if $\varphi_{1}(t) \in
\mathcal{S}_{1}$ and $t\in \mathrm{ULC}_{\varphi_{1}}$,
\cr
\infty, &\quad otherwise.}
\end{equation}

Suppose $X_{j}$ is defined for $j = 1,\ldots,l-1$, $2\leq l\leq k$. Let
$\mathcal{N}_{l}= \{(N_r^{(x_l,l)})_{r\geq0},\break  x_l \in\mathbb
{N}_* \}$ be i.i.d. Poisson processes of rate 1. Let
$\sigma_{i}^{x_{l},l}$ the $i$th mark of $N^{(x_l,l)}, i \geq1$. We
will call $\mathcal{S}_{l} = \{ \sigma_{i}^{(x_{l},l )}; x_{l}
\in\mathbb {N}_{*}, i \geq1 \}$ the set of Poisson marks of the $l$th
level. Let $\mathcal{T}_{l}=\{T_{s}^{(l)}, s \in\mathbb{R}^{+}\}$ be
i.i.d. exponential random variables of rate 1. $\mathcal{N}_{l}$ and
$\mathcal{T}_{l}$ are assumed independent and are independent of
$\mathcal{N}_j$ and $\mathcal {T}_{j}$ for~$j < l$.

For $s \in\mathcal{S}_{l}$, let $\xi_{l}(s) = x_l$ if $s = \sigma
_{j}^{(x_l,l)}$
for some $x_l \in\mathbb{{N}}_{*}$ and $j \geq1$. Notice that $\xi
_{l}$ is well
defined almost surely. Let us now define a measure $\mu_{l}$ on
$\mathbb{R}^{+}$
as follows:
\[
\mu_l \bigl(\{ s \}\bigr) = \gamma_{l}
\bigl(X_{l-1}(s), \xi_{l}(s)\bigr)T_{s}^{(l)},\qquad
\mbox{if } s \in\mathcal{S}_{l}\quad\mbox{and}\quad \mu_l \bigl(
\mathbb{R}^{+} \setminus\mathcal{S}_{l}\bigr) = 0.
\]

For $r \geq0$, let
%e4.7 #&#
%e4.7 ###
\begin{equation}
\Gamma_{l}(r):= \mu_l \bigl([0,r]\bigr)
\end{equation}
and for $t \geq0$, let
%e4.8 #&#
%e4.8 ###
\begin{equation}
\varphi_{l}(t):= \Gamma_{l}^{-1}(t) = \inf\bigl
\{ r \geq0; \Gamma_{l}(r) > t \bigr\}
\end{equation}
be the inverse of $\Gamma_{l}$. $\Gamma_{l}$ will sometimes below be
referred to as \textit{the clock}. It may be (and has been, in the
literature) also called \textit{clock process} (in this case, at level
$l$).

%re4.3 #&#
%
\begin{rmk}
\label{atomb}
Remark~\ref{atom} holds with ``1'' replaced by ``$l$.''
In particular, the range of $\Gamma_{l}$ has a.s. Lebesgue measure
zero, $l=1,\ldots,k$, and every fixed deterministic $r$ is a.s. a
continuity point of $\Gamma_{l}$.
\end{rmk}

We define the process $X_{l}$ on $\bar{\mathbb{N}}_{*}^{l}$ as follows.
For $t \geq0$, let
\[
X_l(t)=\cases{ \bigl(X_{l-1}\bigl(\varphi_{l}(t)
\bigr),\xi_{l}\bigl(\varphi_{l}(t)\bigr) \bigr),& \quad if
$\varphi_{l}(t) \in\mathcal{S}_{l}$ and $t\in \mathrm{ULC}_{\varphi _{l}}$,
\vspace*{3pt}\cr
\bigl(X_{l-1}\bigl(\varphi_{l}(t)
\bigr), \infty\bigr),& \quad otherwise.}
\]

%de4.4 #&#
%
\begin{defin}
We call $X_{k}$ defined just above the $K$-process on ${\mathbb T}_k$,
or $k$-level $K$-process, with parameter set $\underline{\gamma_{k}} =
\{\gamma_{i}; i = 1,\ldots,k \}$. Notation: $X_{k} \sim K({\mathbb
T}_k,\underline{\gamma_{k}})$.
\end{defin}

%re4.5 #&#
%
\begin{rmk}\label{ciK}
Since we only have Poissonian marks in the above definition of $X_{k}$,
we did not have the need of the second coordinate $Y_k$, as in finite
volume, nor did we need to explicitly mention constancy intervals.
The latter notion is nonetheless useful in this context (it will come
up later, in one of our proofs of convergence), and is defined as
follows. Given $1\leq j\leq k$, an interval $I \subset\mathbb{R}^{+}$
is a~constancy interval of $X_{j}$ if it has positive length
%e4.9 #&#
%e4.9 ###
\begin{equation}
X_{j}(r) = X_{j}(s)\qquad\mbox{for all } r,s \in I\mbox{ and }I \mbox{ is maximal.}
\end{equation}
The maximality condition and right continuity of $X_{j}$ implies that $I=[a,b)$
for some $0 \leq a < b$. %We are now ready to define $X_{l}$. %for $2
\end{rmk}

%Let $\mathcal{I}_{l-1}$ be the collection of constancy intervals of
%$X_{l-1}$.

Pictures like those in Figures~\ref{figconst} and~\ref{figclock} might
be drawn (or perhaps, more accurately, envisioned) for $X_{k}$, with
minor changes: in the present case we would have an infinite sequence
of time lines for the Poisson processes $(N_r^{(x_l,l)})_{r\geq0}$,
$x_l\in\mathbb{N}_*$, in Figure~\ref{figconst}. In
Figure~\ref{figclock}, superscripts ``$F$'' should be dropped
throughout; there would be no extra marks, and thus no crosses; the
Poissonian marks would form a dense set of the $x$-axis; if one wanted
to represent them, the constancy intervals would be such that there
would be an infinite number of them in the neighborhood of any fixed
one of them---or, more precisely, between any two distinct such
intervals, there is a distinct such interval; the graph of $\Gamma_l$
would be that of a strictly increasing function with a dense set of
jumps (the Poissonian marks).\vadjust{\goodbreak}

The next result makes the above construction a.s. well-defined for all
times, and implies that $X_k$ is never absorbed at any state. For its
proof, let us introduce the notation
\[
X_{k} = (X_{k,1},\ldots,X_{k,k}),\qquad k \geq1,
\]
making the coordinates of $X_{k}$ explicit.

%le4.6 #&#
%
\begin{Lem}\label{fin}
We have that almost surely $\Gamma_{k}(r)<\infty$ for all $r\in[0,\infty)$ and
$\lim_{r\to\infty}\Gamma_{k}(r)=\infty$.
\end{Lem}

\begin{pf}
As $\Gamma_{j}$ is nondecreasing and unbounded for $j = 1,\ldots,k$, it
is sufficient to show that, for all $r\in(0,\infty)$,
$\Gamma_{k}\circ\cdots \circ\Gamma_{1}(r)<\infty$ almost surely. Let
%e4.10 #&#
%e4.10 ###
\begin{equation}
\label{theta} \Theta_{k}(r):=\Gamma_{k}\circ\cdots\circ
\Gamma_{1}(r).
\end{equation}

We will show by induction that
%e4.11 #&#
%e4.11 ###
\begin{equation}
\label{etheta} E\bigl(\Theta_{k}(r)\bigr) = r\sum
_{x_1 =1}^{\infty}\cdots\sum_{x_k=1}^{\infty}
\gamma_{1}(x|_1)\cdots\gamma_{k}(x|_k)
=r\sum_{x\in\mathbb{N}^k_{*}}\bar{\gamma}_{k}(x).
\end{equation}
%
%where for $x_1,\ldots,x_k$ fixed in the sum, we have that $x|_j=x_1
Since the right-hand side of~(\ref{etheta}) is finite by assumption
[see~(\ref{eqfin}) above], this closes the argument.

Equation~(\ref{etheta}) is immediate from the definition for $k=1$. Let
us suppose that it holds up to $k-1$, for a fixed arbitrary $k\geq2$.
Let us consider the constancy intervals of $X_{k,1}$ (i.e., maximal
intervals over which $X_{k,1}$ is constant): $\mathcal{I} =
\{$constancy intervals of $X_{k,1} \subset[0,\Theta_{k}(r)]\}$. We can
enumerate such intervals as $\mathcal{I} = \{
I(s):=[\Gamma_1(s-),\Gamma_1(s)); s \in\mathcal{S}_1 \cap[0,r] \}$. So,
%e4.12 #&#
%e4.12 ###
\begin{equation}
\label{Theta} \Theta_{k}(r) = \sum_{s \in\mathcal{S}_1 \cap[0,r]}
\bigl|I(s)\bigr| = \sum_{x_1 = 1}^{\infty}\sum
_{i_1 = 1}^{N_{r}^{(x_1,1)}}L_{i_1}^{(x_1)},
\end{equation}
where\vspace*{-2pt} $L_{i_1}^{(x_1)}:= |I(\sigma_{i_1}^{(x_1,1)})|$,
and we recall that $N^{(x_1,1)}$ is a Poisson process of rate 1.
Now\vspace*{-1pt} notice that, for every $x_1 \in\mathbb{N}_{*}$,
$N^{(x_1,1)}$ and $L^{(x_1)}:= \{ L_{i_1}^{(x_1)}\dvtx  i_1 \geq1 \}$
are independent, and $L^{(x_1)}$ is an i.i.d. family of random
variables with $L_{i_1}^{(x_1)}
\sim\Theta_{k-1}^{(x_1)}(\gamma_1(x_1)T_{1}^{(x_1)})$, where
$\Theta_{k-1}^{(x_1)} = \Gamma_{k-1} \circ\cdots\circ\Gamma_{1}$ is the
corresponding of $\Theta_k$ for a $K$ process on ${\mathbb
T}_{k}^{(x_1)}$, the $k-1$-level subtree of ${\mathbb T}_{k}$ rooted on
$x_1$, and parameter set
$\underline{\gamma_{k}^{(x_1)}}=\{\gamma_i(x_1,\cdot)$:
$i=2,\ldots,k\}$.

Then
%e4.13 #&#
\begin{eqnarray}
\label{et1} E\bigl(\Theta_{k}(r)\bigr) &=& r \sum
_{x_1 =1}^{\infty}E \bigl\{ \Theta_{k-1}^{(x|_1)}
\bigl(\gamma_1(x|_1) T_1^{(x_1)}
\bigr)\bigr\}
\nonumber\\[-8pt]\\[-8pt]
&=& r \sum_{x_1 = 1}^{\infty}
\gamma_{1}(x|_1) \sum_{x_2,\ldots,x_k}
\gamma_{2}(x|_2)\cdots\gamma_{k}(x|_k),\nonumber
\end{eqnarray}
where we have used that $T_{i_1}^{(x_1)}, i_1 \geq1$, are i.i.d. with
mean 1 random variables, independent of all other random variables,
and, in the second equality, the induction hypothesis. The coincidence
of the right-hand sides of~(\ref{etheta}) and~(\ref{et1}) closes the
argument for the first assertion.

It follows readily from~(\ref{Theta}) and the independence of the
summands on its right-hand side, and the fact that their distribution
depend only on $x_1$, that a.s. $\Theta_{k}(r)\to\infty$ as
$r\to\infty$, and the second assertion follows from this and the first
assertion.
%and (\ref{gamma}) to claim that $E(\Theta_{k}(r))$ is finite.
\end{pf}

%It follows readily from~(\ref{Theta}) and the independence of the
%summands on its right hand side, and the fact that their
%distribution depend only on $x_1$, that a.s. $\Theta_{k}(r)\to\infty$
%as $r\to\infty$.

%re4.7 #&#
%
\begin{rmk}\label{rmkTheta}%{nullmeasure}
We will on several occasions below, as we did right above, work
with\vspace*{-1pt} the compounded clock $\Theta_{j}$ rather than with
the simple clock $\Gamma_{j}$ or simple time. In finite volume, we will
do the same with the finite volume version $\Theta^{(n)}_{j}$
[appearing below; see~(\ref{then})]. This is (only) for convenience,
since we can obtain simpler expressions to work with for quantities
involving the compounded clocks, like~(\ref{Theta}) above,
or~(\ref{extranumber})--(\ref{extracont}) below, than ones for simple
clocks or simple time. A typical argument (as the one above) will use
the fact that a.s. $\Theta_{j}(r)<\infty$ for all $r$ and
$\Theta_{j}(r)\to\infty$ as $r\to\infty$ to go from a statement
involving $\Theta_{j}(r)$ to one involving $\Gamma_{j}(r)$ or $r$.
Notice that the definitions of $X^F_k$ and $X_k$ involve only simple
clocks $\Gamma^F_{j}$ and $\Gamma_{j}$, respectively; the composition
behind $\Theta_{j}$ (and $\Theta^{(n)}_{j}$) helps with computations,
however.
\end{rmk}

We next prove a property about the infinities of $X_k$. Even though
this result is not used in what follows it, and is in some sense
contained in the next result, Lemma~\ref{hypercube}, it sheds light on
a characteristic of $X_k$ which is worth pointing out.

%le4.8 #&#
%
\begin{Lem}\label{nullmeasure2}
Let $k\geq1$.
\begin{longlist}[(2)]
%only if $t \in\Gamma_{j}(\mathbb{R}^{+})$.
%
\item[(1)] The set of infinities of $X_{k}$ has a.s. Lebesgue
    measure zero. More precisely, let ${\mathfrak I}_k=\bigcup_{i =
    1}^k{\mathfrak I}_{k,i}$, with ${\mathfrak I}_{k,i}:=\{ t
    \geq0\dvtx  X_{k,i}(t) = \infty\}$; then, ${\mathfrak I}_k$ has
    a.s. Lebesgue measure zero.

\item[(2)] Almost surely,~if $X_{k,i}(t) = \infty$ for some $t \geq
0$ and $i = 1,\ldots,k$,
then $X_{k,j}(t) = \infty$ for $i \leq j \leq k$.
%1,\ldots,j$, and $j = 1,\ldots,k-1$
\end{longlist}
\end{Lem}

\begin{pf} %(Sketchy)
From the construction of $X_k$ it follows that $X_{k,k}(t)\in{\mathbb
N}_\ast$, that is, is finite if and only
$\varphi_k(t)\in\mathcal{S}_{k}$ and $t\in \mathrm{ULC}_{\varphi_{k}}$,
which means that
$t\in\bigcup_{s\in\mathcal{S}_{k}}[\Gamma_k(s)-,\Gamma_k(s))
=\mathbb{R}^{+}\setminus\Gamma_{k}(\mathbb{R}^{+})$. From
Remark~\ref{atomb}, it follows that ${\mathfrak I}_{k,k}$ has a.s.
Lebesgue measure zero, and in particular both claims are established
for $k=1$.

Let us inductively suppose they hold for $k=K-1$ for $K\geq2$. By the
reasoning of previous paragraph, we have that ${\mathfrak I}_{K,K}$ has
a.s. Lebesgue measure zero. It is thus enough to consider ${\mathfrak
I}_{K,i}\setminus{\mathfrak I}_{K,K}$ for $i<K$. Again by the reasoning
of the previous paragraph and the construction of $X_K$, we have that
the latter set is nonempty only if ${\mathfrak
I}_{K-1,i}\cap\mathcal{S}_{K}\ne\varnothing$, but given the induction
hypothesis, this a.s. does not happen for a.e. realization of
$\mathcal{S}_{K}$, since Poisson processes of constant rate a.s. assign
no point to sets of null Lebesgue measure. This means that a.s.
${\mathfrak I}_{K}={\mathfrak I}_{K,K}$, and the induction step for the
first claim follows. The latter equality implies in particular the
second claim for $i=K-1$ (which is the only remaining case if $K=2$).
Suppose now that $K\geq3$ and $X_{K,i}(t) = \infty$ for some $i<K-1$
and $t\geq0$; since $X_{K,i}(t)=X_{K-1,i}(\varphi_K(t))$, we may apply
the induction hypothesis to conclude that
$X_{K,i}(t)=X_{K-1,i}(\varphi_K(t)) =X_{K-1,K-1}(\varphi_K(t)) =
X_{K,K-1}(t) = \infty$, and from the conclusion of the previous
sentence follows the induction step for the second claim.
\end{pf}

The next result roughly states that once a coordinate of a $K$-process
is large, then so are the subsequent ones. This is in line with the
property stated in Lemma~\ref{nullmeasure2}(2) above---in a way, it is
a continuous extension of it. In the next section we establish a finite
volume analogue; see Lemma~\ref{hypercubeprob} below.

%le4.9 #&#
%
\begin{Lem} \label{hypercube}
Let $X_{k}$ be a $k$-level $K$-process. Given $T > 0$, not necessarily
deterministic, and $m\geq1$, there a.s. exists $\tilde m=\tilde
m^{(k)}$ such that if $X_{k,i}(t) > \tilde m$ for some $1\leq i<k$ and
$t \in[0,T]$, then $X_{k,j}(t) > m$ for all $j = i+1,\ldots,k$.
\end{Lem}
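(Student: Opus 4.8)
The plan is to prove the statement by induction on $k$, mirroring the inductive construction of $X_k$. For $k=1$ there is nothing to prove (the conclusion is vacuous, as there is no $j$ with $i<j\leq k$), so assume the result holds for $(k-1)$-level K processes. Fix $T>0$ and $m\geq 1$. The key observation is that $X_{k}|_{k-1}:=(X_{k,1},\ldots,X_{k,k-1})$ is, up to the time change $\varphi_k$, essentially a $(k-1)$-level K process; more precisely, for $t\in[0,T]$ we have $X_{k,i}(t)=X_{k-1,i}(\varphi_k(t))$ for $i=1,\ldots,k-1$, and $\varphi_k([0,T])\subseteq[0,R]$ for some a.s.\ finite random $R$ (using Lemma~\ref{fin} and Remark~\ref{gfin}, $\Gamma_k$ is finite and strictly increasing with $\Gamma_k(r)\to\infty$, so $\varphi_k$ maps bounded sets to bounded sets). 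Applying the induction hypothesis to $X_{k-1}$ on the time horizon $[0,R]$ with the same threshold $m$, we get an a.s.\ finite $\tilde m^{(k-1)}$ such that whenever $X_{k-1,i}(r)>\tilde m^{(k-1)}$ with $r\in[0,R]$, then $X_{k-1,j}(r)>m$ for all $i<j\leq k-1$. Translating back through $\varphi_k$, this handles all coordinates $j=i+1,\ldots,k-1$.

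What remains is the top coordinate $j=k$: I must produce a threshold guaranteeing that $X_{k,i}(t)>\tilde m$ forces $X_{k,k}(t)>m$ as well. The natural route is: it suffices to bound, on $[0,T]$, the indices $x_k$ of the finitely many "small-label" Poisson marks of level $k$ that can actually be visited. Concretely, consider the constancy intervals $\mathcal{I}_{k-1}$ of $X_{k-1}$ intersecting $[0,R]$ whose common value $X_{k-1}(s)$ has all coordinates $\le \tilde m$ for some suitable $\tilde m\ge\tilde m^{(k-1)}$; there are only finitely many such intervals visited up to time $R$ — here one uses that $X_{k-1}$ restricted to $[0,R]$ makes only finitely many "excursions into the region $\{\text{all coords}\le N\}$" for each fixed $N$, which itself follows from the finite-volume-type summability/Lemma~\ref{fin} arguments (the total $\bar\gamma$-mass is finite). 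Inside each such interval, $X_{k,k}(\varphi_k^{-1}(\cdot))$ takes finitely many values $\xi_k(\sigma)=x_k$ over the Poisson marks $\sigma\in\mathcal{S}_k$ lying in that interval, and a.s.\ only finitely many distinct small indices $x_k$ appear across this finite collection of intervals. Let $\tilde m$ be chosen larger than $\tilde m^{(k-1)}$ and larger than every such index $x_k$ that occurs. Then if $X_{k,i}(t)>\tilde m$ with $t\in[0,T]$, the constancy interval of $X_{k-1}$ at $\varphi_k(t)$ cannot be one of the "all coords $\le\tilde m$" intervals (else $X_{k,i}(t)=X_{k-1,i}(\varphi_k(t))\le\tilde m$), so in particular $X_{k-1,i}(\varphi_k(t))>\tilde m\ge\tilde m^{(k-1)}$ gives $X_{k,j}(t)>m$ for $j<k$ by the induction step, and the label $\xi_k(\varphi_k(t))$ — if finite — exceeds $\tilde m>m$ by construction, while if $\varphi_k(t)\notin\mathcal{S}_k$ then $X_{k,k}(t)=\infty>m$ trivially.

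The main obstacle I anticipate is the finiteness claim used for the top coordinate: that up to the finite time $R$, the process $X_{k-1}$ visits only finitely many constancy intervals whose associated value lies in any fixed bounded region, and consequently only finitely many level-$k$ Poisson marks with "small" labels are relevant. This needs to be argued carefully from the construction — it is morally the same phenomenon that makes $\Gamma_k(r)<\infty$ in Lemma~\ref{fin} (a bounded time interval accumulates only finitely much mass from the "heavy" traps, the light ones being the dense-but-summable remainder), combined with the fact that marks with a given label $x_k$ form a rate-one Poisson process, hence are locally finite. I would isolate this as a short sub-lemma: \emph{for each $N\geq 1$ and $R>0$, a.s.\ the set $\{t\in[0,R]:X_{k-1,i}(t)\le N\ \forall i\}$ meets only finitely many intervals of $\mathcal{I}_{k-1}$}, proved again by induction on the number of levels using the representation of $\Theta_k$ in \eqref{Theta} and Borel–Cantelli. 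Everything else is bookkeeping with the time changes $\varphi_l$ and the a.s.\ properties collected in Remarks~\ref{atom}, \ref{atom_b} and \ref{nullmeasure2}.
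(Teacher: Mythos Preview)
Your inductive structure is fine, but the argument for the top coordinate $j=k$ is backwards. You restrict attention to the \emph{small}-coordinate constancy intervals of $X_{k-1}$ (those with all coordinates $\le\tilde m$), argue that there are finitely many of them, and then try to bound the level-$k$ labels $\xi_k$ appearing inside them. But what you need is the opposite: when $X_{k,i}(t)>\tilde m$, the point $\varphi_k(t)$ lies in a \emph{large}-coordinate interval (some coordinate exceeds $\tilde m$), and you must show that no level-$k$ marks with label $\le m$ occur \emph{there}. Your choice of $\tilde m$ as ``larger than every such index $x_k$ that occurs'' in the small-coordinate intervals gives no control over labels in the complementary region. (Separately, each constancy interval of $X_{k-1}$ has positive length and therefore contains level-$k$ Poisson marks with arbitrarily large labels, so ``every such index $x_k$'' is unbounded unless you first restrict to labels $\le m$---but then $\tilde m>m$ is trivial and still says nothing about the large-coordinate region.)

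The paper's fix is direct: one shows that the Lebesgue measure of $\mathcal{T}_{kj}(l)=\{r\le\Theta_{k-1}(T'):X_{k-1,j}(r)>l\}$ tends to $0$ as $l\to\infty$ (its expectation is a tail of the convergent sum~\eqref{eq:fin}, computed as in the proof of Lemma~\ref{fin}), so for $l$ large the union $\cup_j\mathcal{T}_{kj}(l)$ is disjoint from the locally finite set $\mathcal{\tilde S}_k^{(m)}$ of marks with label $\le m$. This produces $\hat m^{(k)}$ such that $X_{k-1,i}(\varphi_k(t))>\hat m^{(k)}$ forces $X_{k,k}(t)>m$; combined with the induction hypothesis (applied with $\hat m^{(k)}\vee m$ in place of $m$), one takes $\tilde m^{(k)}=\tilde m^{(k-1)}\vee\hat m^{(k)}$. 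An equivalent repair, closer to what you seem to have intended: look instead at the finitely many marks $\sigma\in\mathcal{\tilde S}_k^{(m)}\cap[0,R]$, note that $X_{k-1}(\sigma)$ is a.s.\ finite at each of them (Remark~\ref{nullmeasure2}), and take $\tilde m$ to exceed $\max_i X_{k-1,i}(\sigma)$ over these $\sigma$ together with $\tilde m^{(k-1)}$.
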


\begin{pf}
We will start by claiming that there a.s. exists $\hat m^{(k)}$ such
that if $X_{k,i}(t) > \hat m^{(k)}$ for any $t\leq T$ and
$i=1,\ldots,k-1$, then $X_{k,k}(t) > m$. This closes the argument when
$k=2$. For $k\geq3$, we use an inductive argument.

%Let us fix $\eta>0$. We may choose $T' > 0$ (deterministic) such that
% \begin{equation}\label{tktt}
%P(\Theta_{k}(T') > T)>1-\eta
%(see second assertion of Lemma~\ref{fin}).
For $m\in\mathbb{N}_{*}$ and $j=1,\ldots,k$, let
%e4.14 #&#
%e4.13 ###
\begin{equation}
\label{tsj} \tilde{\mathcal{S}}_{j}^{(m)}=\bigl\{
\sigma_{i}^{(x,j)}\dvtx  x=1,\ldots,m, i \geq1 \bigr\}
\end{equation}
be the set of Poissonian marks of level $j$ with labels at most $m$.
Let us fix \mbox{$T' > 0$} deterministic, and let $\mathcal{T}_{kj}(l)$
denote the set of times up to $\Theta_{k-1}(T')$ spent by $X_{k-1,j}$
above $l$, $j=1,\ldots,k-1$. By a similar reasoning as the one employed
to prove~(\ref{etheta}), we may check that the expected Lebesgue
measure of $\mathcal{T}_{kj}(l)$ equals
%e4.15 #&#
%e4.14 ###
\begin{equation}
\label{tt1} T'\sum_{x_{1}=1}^{\infty}
\cdots\sum_{x_j =l+1}^{\infty}\cdots\sum
_{x_{k-1}=1}^{\infty} \gamma_{1}(x|_1)
\cdots\gamma_{k-1}(x|_{k-1}).
\end{equation}
[We recall that the reason to work with the compounded clocks $\Theta
_j$'s rather than the simple clocks $\Gamma_j$'s or deterministic times
is precisely to be able to derive a~simple formula like the one
in~(\ref{tt1}), which would be more complicated for simple clocks or
deterministic times replacing $\Theta_{k-1}(T')$.] Since that Lebesgue
measure is decreasing in $l$, and, as follows from our assumptions on
$\underline{\gamma_k}$, the expression in~(\ref{tt1}) vanishes as
$l\to\infty$, we have that the limit of that Lebesgue measure as
$l\to\infty$ vanishes almost surely. We then have from elementary
properties of Poisson processes that
%e4.16 #&#
%e4.15 ###
\begin{equation}
\label{tt2} \Biggl\{\bigcup_{j=1}^{k-1}
\mathcal{T}_{kj}(l)\Biggr\}\cap\tilde{\mathcal{S}}_{k}^{(m)}=
\varnothing
\end{equation}
for all large enough $l$ almost surely, so given $m\in\mathbb{N}_{*}$,
we find $\bar m^{(k)}=\bar m^{(k)}(T')$ such that on $\{\Theta_{k}(T')
> T\}$ if $X_{k,i}(t) > \bar m^{(k)}$ for some $t\leq T$ and
$i=1,\ldots,k-1$, then, since on $\{\Theta_{k}(T') > T\}$ the
trajectory of $X_{k,k}$ in $[0,T]$ depends only on the Poisson points
of $\mathcal{N}_{k}$ on $[0,\Theta_{k-1}(T')]$, we have
from~(\ref{tt2}) that $X_{k,k}(t) > m$. Since from second assertion of
Lemma~\ref{fin} $\bigcup_{T'>0}\{\Theta_{k}(T') > T\}$ has full
measure, we may a.s. choose $T'$ such $\{\Theta_{k}(T') > T\}$ occurs,
and then choose $\hat m^{(k)}=\bar m^{(k)}(T')$, and the claim at the
beginning of the proof follows.

As we have already argued, this in particular establishes the lemma for
$k=2$, by the choice $\tilde m^{(2)}=\hat m^{(2)}$. Let us assume that
the lemma is established for \mbox{$k-1\geq2$}. This means that given
$m\geq1$ there a.s. exists $\tilde m^{(k-1)}$ such that if
$X_{k-1,i}(t) > \tilde m^{(k-1)}$ for some $1\leq i<k-1$ and $t \in
[0,\Theta_{k-1}(T')]$,
then $X_{k-1,j}(t) > m$, %\vee\hat m^{(k)}$ for all $j = i+1,
where $T'$ is as at the end of the previous paragraph [notice that we
used $\Theta_{k-1}(T')$ as $T$ here].
The claim of the lemma then follows by the choice $\tilde
m^{(k)}=\tilde m^{(k-1)}\vee\hat m^{(k)}$, where $m^{(k)}$ is as in the
claim at the beginning of the proof with $\Theta_{k-1}(T')$ replacing
$T$. Indeed, if for some $t\in[0,T]$ we have $X_{k,i}(t)>\tilde
m^{(k)}$, then, by the claim at the beginning, $X_{k,k}(t)>m$, and the
claim of the lemma is established for $j=k$. If $i<j<k$, then since the
trajectory of $(X_{k,i}$, $i=1,\ldots,k-1)$ in $[0,T]$ shadows that of
$X_{k-1}$ in $[0,T'']$ for some $T''\leq\Theta_{k-1}(T')$, meaning that
there exists $t'\in[0,\Theta _{k-1}(T')]$ such that $(X_{k,i}(t),
i=1,\ldots,k-1)=X_{k-1}(t')$, we have that
$X_{k,j}(t)=X_{k-1,j}(t')>m$, by the induction hypothesis, and the
argument is complete.
\end{pf}

%once we observe that
%Then substituting $\Theta_{k-1}(T')$ as $T$,
%with $T'$ as above, and $\hat m^{(k)}\vee m$ as $m$ in that claim, and
%taking the choice $\tilde m^{(k)}=\tilde m^{(k-1)}\vee\hat m^{(k)}$,
%%$i=1,\ldots,k-1$,
%%$\tilde m_{k-1}^{(k)}=\tilde m_1^{(k)}$,
%we find that the claim is satisfied a.s. on $\{\Theta_{k}(T') > T\}$,
%and the full claim follows since
%$\eta$ is arbitrary.

%%%%%%%%%%%%%%%%%%%%%%%%%%%%%%%%%%%%%%%%%%%%%%%%%%%%%%%%%%%%%%%%%%%%%%%%%%%%%%%%%%%%
%%%%%%%%%%%%%%%%%%%%%%%%%%%%% CONV
%%%%%%%%%%%%%%%%%%%%%%%%%%%%%%%%%%%%%%%%%%%%%%%
%%%%%%%%%%%%%%%%%%%%%%%%%%%%%%%%%%%%%%%%%%%%%%%%%%%%%%%%%%%%%%%%%%%%%%%%%%%%%%%%%%%%
%s5 #&#
%s5 ###
\section{Convergence}
\label{convergence}

%s5.1 #&#
%s5.1 ###
\subsection{Scaling limit for the GREM-like trap model}
\label{scalim}

We start this section with our main result, the scaling limit for the
GREM-like trap model in the fine tuning regime and extreme time scale.
Let us go again, this time in more detail,
over the definition of these terms; see the last paragraph of
Section~\ref{tree}. [In this section, we replace the notation above
with superscript ``$F$,'' denoting finite volume, to a~notation with
superscript ``$(n)$,'' to emphasize sequence dependence instead.] The
parameters of the model of this subsection will be taken random, as
described below.

For $j = 1,\ldots,k$, let $\tau_{j}:= \{ \tau_{j}(x|_j); x|_j \in
\mathcal{M}|_{j}\}$ be an i.i.d. family of random variables in the
domain of attraction of an $\alpha_{j}$-stable law. We suppose
%e5.1 #&#
%e5.1 ###
\begin{equation}
0 < \alpha_{1} < \cdots< \alpha_{k} < 1.
\end{equation}

%Consider now the trap model on ${\mathbb T}^F_k$ (keeping the notation
%of Section~\ref{finitevolume})
%with parameter set $\underline{\gamma^F_{k}} = \{ \tau_{j}; j = 1,
%GREM-like trap model.

For $j = 1,\ldots,k$, we will relabel $\tau_{j}$ obtaining
$\tau_{j}^{(n)} = \{\tau_{j}^{(n)}(x|_{j}); x|_{j}\in\mathcal
{{M}}|_{j}\}$,
so that, for every $(x|_{j-1}) \in\mathcal{{M}}|_{j-1}$,
$\{ \tau_{j}^{(n)}(x|_{j}); x_{j} \in\mathcal{{M}}_{j} \}$
are the decreasing order statistics of
$\{\tau_{j}(x|_j); x_j \in\mathcal{M}_{j}\}$.

For $j = 1,\ldots,k$, $n \geq1$ and $x|_{j-1} \in\mathbb
{N}_{*}^{j-1}$, let
%e5.2 #&#
%e5.2 ###
\begin{equation}
c_{j}^{(n)} = \bigl(G_j^{-1}
\bigl(M_{j}^{-1}\bigr)\bigr)^{-1},
\end{equation}
where\vspace*{1pt} $G_j^{-1}$ is the (generalized) inverse of $G_j\dvtx
[0,\infty)\to (0,1]$ such that $G_j(t)=P(\tau_{j}(x|_{j}) > t)$,
%c_{j}^{(n)} = (\inf\{ t \geq0: P(\tau_{j}(x|_{j}) > t) \leq M_{j}
and make
%e5.3 #&#
%e5.3 ###
\begin{equation}
\label{eqgjn} \gamma_{j}^{(n)}(x|_{j})=
c_{j}^{(n)} \tau_{j}^{(n)}(x|_j),\qquad
x|_j\in\mathcal{M}_{j};
\end{equation}
%
%for all $j = 1,\ldots,k$ and $x|_{j-1} \in\mathbb{N}_{*}^{j-1}$.
let also
%e5.4 #&#
%e5.4 ###
\begin{equation}
\label{eqgj} \bigl\{\gamma_{j}(x|_{j}), x_j
\in\mathbb{N}_{*}^j\bigr\}
\end{equation}
denote independent $j$-parametrized Poisson point processes,
with intensity measure given by $y^{-\alpha_{j}-1}$, $y>0$, in
decreasing order.
% \label{eqgj}
%with intensity measure given by the density $y^{-\alpha_{j}-1}$, $y>0$.
%We assume that for all $j = 1,\ldots,k$ and $x|_{j-1} \in
%$\gamma_{j}(x|_{j})$ is non increasing in $x_j$.

%the increments of an $\alpha_{j}$-stable subordinator on
%$[0,1]$ in decreasing order, % for every $x_{i} \in\mathbb{N}_{*}, i
%= 1,\ldots,j-1$ fixed.
%with $\gamma_{j}(x|_{j-1})$, $j = 1,\ldots,k$ and $x|_{j-1} \in
%independent of each other.

The \textit{fine tuning} regime mentioned above and at the
\hyperref[intro]{Introduction} corresponds to choosing $M^{(n)}_{1} =
n$ and
%e5.5 #&#
%e5.5 ###
\begin{equation}
\label{eqft} M_{j+1}^{(n)} = \bigl\lfloor
1/c_{j}^{(n)} \bigr\rfloor,\qquad j = 1,\ldots,k-1.
\end{equation}
We will make this choice from now on.

Let
%e5.6 #&#
%e5.6 ###
\begin{equation}
\label{eqtg} \tilde\gamma_{j}^{(n)}(x|_{j})=
\cases{ \gamma_{j}^{(n)}(x|_{j}), &\quad if $j=1,\ldots,k-1$,
\vspace*{3pt}\cr
\tau_{k}^{(n)}(x|_k), &\quad if
$j=k$}
\end{equation}
and let $\tilde X_{k}^{(n)} \sim TM({\mathbb T}^{(n)}_k; \underline
{\tilde\gamma^{(n)}_{k}})$.

%re5.1 #&#
%
\begin{rmk}
\label{eqtxn}
One may readily check from Lemma~\ref{lmrep} that, in
terms of the coin tossing description, $\tilde X_{k}^{(n)} \sim
TM({\mathbb T}^{(n)}_k; \tau_{k}^{(n)},(p^{(n)}_{j})_{j=1}^{k-1})$,
where for $j=1,\ldots,k-1$ and $x|_{j}\in\mathcal{{M}}|_j$
%e5.7 #&#
%e5.7 ###
\begin{equation}
\label{pn} p^{(n)}_{j}(x|_{j}) =
\frac{1}{1 + \tau^{(n)}_{j}(x|_{j})}.
\end{equation}
With this description, and general finite $M_1,\ldots,M_k$ [not
necessarily satisfying~(\ref{eqft})], we call $\tilde X_{k}^{(n)}$ the
GREM-like trap model on ${\mathbb T}^{(n)}_k$ with parameters
$\tau_{j}(x|_{j})$, $j=1,\ldots,k$, $x|_{j}\in\mathcal{{M}}|_j$. [The
relabeling performed in this subsection (cf. the definition given in
the last paragraph of Section~\ref{tree}) is necessary for the
existence of the limit.] In this guise, with a choice of
$M_1=\cdots=M_k$, the model was introduced and studied
in~\cite{knsn1,knsn2}, with the derivation of infinite volume aging
functions as the main motivation, with infinite volume limits taken
first, and then an infinite time limit. See Remark~\ref{rmksn} below.
\end{rmk}

Let us speed up $\tilde X_{k}^{(n)}$ by $c_{k}^{(n)}$, namely, let
%e5.8 #&#
%e5.8 ###
\begin{equation}
{X}_{k}^{(n)} =\tilde X_{k}^{(n)}
\bigl(t/c_{k}^{(n)}\bigr),\qquad t \geq0.
\end{equation}
This corresponds to the extreme time scale mentioned above and at the
\hyperref[intro]{Introduction}. One may readily check that $X_{k}^{(n)}
\sim TM({\mathbb T}^{(n)}_k; \underline{\gamma^{(n)}_{k}})$. Let $X_{k}
\sim K({\mathbb T}_k;\underline{\gamma_{k}})$.

%th5.2 #&#
%
\begin{theo}\label{scal}
Let $X_{k}^{(n)}$ and $X_{k}$ be as above. Then
%e5.9 #&#
%e5.9 ###
\begin{equation}
\bigl({X}_{k}^{(n)}, \underline{\gamma_{k}^{(n)}}
\bigr)\quad \Rightarrow\quad (X_{k}, \underline{\gamma_{k}}),
\end{equation}
where $\Rightarrow$ means weak convergence in the product of Skorohod
space with
the space of finite measures in $\mathbb{N}_{*}^{k}$ equipped with the topology
of weak convergence.
\end{theo}

The Skorohod space in the above statement will be described in detail
at the beginning of next subsection.

%re5.3 #&#
%
\begin{rmk}
\label{rmksn} As a note on the differences between the above result and
those of~\cite{knsn1,knsn2}, let us point out that the choice of volume
relations should not be very important in the context
of~\cite{knsn1,knsn2}, since the volume limit is taken first, and then
the time limit. One expects aging to take place in this regime, and
that is what is behind the (explicit) results of~\cite{knsn1,knsn2}.
Our choice of volume/time relations is on the other hand essential in
order to obtain the specific limit stated above. In particular, they
represent not an aging time regime, but an ergodic time regime, that
is, a time regime where the process is already close to equilibrium.
(Aging is a~phenomenon that instead takes place far from equilibrium.)
In this sense, our results do not compare immediately to those
in~\cite{knsn1,knsn2}, since they involve different time/volume
regimes, where different behaviors take place. In~\cite{kngg}, a
smaller time regime is studied, where aging takes place, with results
comparable to~\cite{knsn1,knsn2}. Other choices of volume/time scaling
may lead to different asymptotics (from the above one and conceivably
also from~\cite{knsn1,knsn2}).
\end{rmk}

%s5.2 #&#
%s5.2 ###
\subsection{Infinite volume limit for the $k$-level trap model}
\label{infvol}

As anticipated in the \hyperref[intro]{Introduction},
Theorem~\ref{scal} will be proven in Section~\ref{GREM} below by
verifying the conditions of an infinite volume limit result for
$k$-level trap models. This is the object of this and the next two
subsections. We may in this section, and in the following two
subsections, think of the parameters of the model as deterministic. We
will return to random parameters at the last subsection.

Let us consider a sequence of $k$-level trap models $X_{k}^{(n)}, n
\geq1$, on a sequence of finite\vspace*{-1pt} trees ${\mathbb
T}_{k}^{(n)}$, with volumes $M_1=M_1^{(n)},\ldots,M_k=M_k^{(n)}$,
and
parameter sets~$\underline{\gamma_{k}^{(n)}}$,\vadjust{\goodbreak} respectively
(see\vspace*{-1.5pt} Definition~\ref{klt}), and prove a weak
convergence result for that sequence under the Skorohod topology on
$D({\bar{\mathbb N}}_{*}^{k}, [0,\infty))$, the space of c\`adl\`ag
functions from $[0,\infty)$ to ${\bar{\mathbb N}}_{*}^{k}$. As
anticipated at the beginning of Section~\ref{scalim}, we replace the
superscript ``$F$'' used in the first sections by ``$(n)$'' everywhere
to emphasize the dependence on $n$.

Before proceeding, let us briefly review the Skorohod topology.
We start by equipping ${\bar{\mathbb N}}_{*}^{k}$ with the metric
%e5.10 #&#
%e5.10 ###
\begin{equation}
d(x,y) = \max_{1\leq j\leq k}\bigl|x_{j}^{-1} -
y_{j}^{-1}\bigr|,\qquad x, y \in\bar{\mathbb{N}}_{*}^{k},
\end{equation}
where $\infty^{-1}=0$, under which it
%$(\bar{\mathbb{N}}_{*}^{k}, d)$
is compact.
%The topology will be the $J_1$ topology on $D(\bar\mathbb{N}_{*}^{k},
%[0,\infty))$.
The Skorohod metric on $D({\bar{\mathbb N}}_{*}^{k},\break  [0,\infty))$
is as follows. For $f,g\in D({\bar{\mathbb N}}_{*}^{k}, [0,\infty))$,
let
%e5.11 #&#
%e5.11 ###
\begin{equation}
\label{eqmet1} \rho(f,g) = \inf_{\lambda\in\Lambda} \biggl[\phi(\lambda)
\vee\int_0^\infty e^{-u} \rho(f,g,\lambda,u) \,du \biggr],
\end{equation}
where
%e5.12 #&#
%e5.12 ###
\begin{equation}
\label{eqmet2} \rho(f,g,\lambda,u) = \sup_{t\geq0}\,d\bigl(f(t
\wedge u),g\bigl(\lambda(t)\wedge u\bigr)\bigr)
\end{equation}
with $\Lambda$ the class of \textit{time distortions}: increasing
Lipschitz functions from $[0,\infty)$ onto $[0,\infty)$, and $\phi\dvtx
\Lambda\to[0,\infty)$ such that
%e5.13 #&#
%e5.13 ###
\begin{equation}
\label{eqg} \phi(\lambda)=\sup_{0\leq s<t}\biggl\llvert\log
\frac{\lambda_t-\lambda
_s}{t-s}\biggr\rrvert;
\end{equation}
see Section~3.5 in~\cite{knek}.

In order to get our convergence result, we will impose the following
conditions on the
volumes and parameters. For $j=1,\ldots,k$, suppose that as $n
\rightarrow\infty$
%$M_{j} = M_{j}^{(n)}$ and
%e5.14 #&#
%e5.15 ###
%e5.14 ###
\begin{eqnarray}
\label{em1} M_{j}^{(n)}&\to&\infty,
\\
%
%$\gamma_i^{(n)}: \mathbb{N}_{*}^{i} \rightarrow\mathbb{R}_{+},i \in
%are measures such that $\gamma_i^{(n)} \equiv0$ on $\bar{\mathcal{
%M}}_{i}^{c}$
%$\gamma_i^{(n)} \rightarrow\gamma_i $ weakly as $n \rightarrow
%e5.15 #&#
\label{em2} \gamma_j^{(n)}(x) &\rightarrow&
\gamma_j (x)\qquad\mbox{for every } x\in{\mathbb N}_{*}^{j}\mbox{ and }
\sum_{x\in{\mathbb N}_{*}^{j}}\bar\gamma_j^{(n)}(x)
\to\sum_{x\in
{\mathbb N}_{*}^{j}}\bar\gamma_j(x)\hspace*{-35pt} %\bar{\gamma}_j^{(n)} \rightarrow
\end{eqnarray}
with $\gamma_{j}, \bar\gamma_{j}$ as in the beginning of
Section~\ref{GKprocess} [see paragraph of~(\ref{gamma}), (\ref{eqfin})
above], $\gamma_j^{(n)}\equiv0$ on ${\mathbb
N}_{*}^{j}\setminus\mathcal {M}|_{j}$ and
%e5.16 #&#
%e5.16 ###
\begin{equation}
{\bar\gamma}^{(n)}_{j}(x|_{j}):=
\gamma^{(n)}_{1}(x|_1)\times\gamma
^{(n)}_{2}(x|_2) \times\cdots\times
\gamma^{(n)}_{j}(x|_{j}).
\end{equation}

%Condition (\ref{em2}) is readily seen to be equivalent in our case to
%the following. As $n\to\infty$
%and likewise for $\bar{\gamma}_j^{(n)}, \bar{\gamma}_j$.

Our result will require additional conditions that look quite
intricate. We state
them now and discuss them, together with the above conditions, after we
state the convergence result.
We further suppose that for $j=2,\ldots,k$
%e5.17 #&#
\begin{eqnarray}\label{em4}%\nonumber
&& \frac{1}{\prod_{p=1}^{j-1}M_{p+1}}
\sum_{l=1}^{j-1}\ \sum_{x|_{j-1}\in\mathcal{M}|_{j-1}}
\ \prod_{p=1}^{l-1}
\bigl(M_{p+1} \gamma_p^{(n)}(x|_{p})
\bigr)
\nonumber\\[-9pt]\\[-9pt]
&&\hspace*{129pt}{}\times  \prod_{p = l + 1}^{j-1}\bigl( 1 +
M_{p+1} \gamma_{p}^{(n)}(x|_{p})\bigr)
\rightarrow0\nonumber
\end{eqnarray}
and
%e5.18 #&#
\begin{eqnarray}
\label{em3} %\begin{eqnarray}\nonumber
&& \frac{1}{\prod_{p=1}^{j-1}M_{p+1}} %\sum_{x_1=1 }^{M_1} \ldots
\sum_{l=1}^{j-1}
\ \sum_{x|_j\in\mathcal{M}|_{j}}\gamma_j^{(n)}(x|_j)
%%\times
%}
%& \\
%&
\ \prod_{p=1}^{l-1} \bigl(M_{p+1}
\gamma_{p}^{(n)}(x|_{p})\bigr)
\nonumber\\[-9pt]\\[-9pt]
&&\hspace*{109pt}{}\times
\prod_{p = l + 1}^{j-1}\bigl(1 + M_{p+1}
\gamma_{p}^{(n)}(x|_{p})\bigr) \rightarrow0\nonumber
\end{eqnarray}
as $n \rightarrow\infty$, where by convention
%e5.19 #&#
\[
\prod_{p=1}^{0}
\bigl(M_{p+1} \gamma_{p}^{(n)}(x|_{p})
\bigr) = \prod_{p = j }^{j-1}\bigl( 1 +
M_{p+1} \gamma_{p}^{(n)}(x|_{p})\bigr)
= 1.
\]
Here, and many times below, we omit the superscript ``$(n)$'' from the
notation for the
volumes $M_1,\ldots,M_k$.

We are ready to state our infinite volume limit result.

%th5.4 #&#
%
\begin{theo}\label{GK}
For\vspace*{-1pt} $n\geq1$, let $X_{k}^{(n)}$ be the trap model on
${\mathbb T}_{k}^{(n)}$, with volumes $M_1^{(n)},\ldots, M_k^{(n)}$,
and parameter sets $\underline{\gamma_{k}^{(n)}}$, respectively,
satisfying conditions (\ref{em1})--(\ref{em3}). Let\vspace*{-1.5pt}
$X_{k}$ be the $K$-process on ${\mathbb T}_{k}$ with parameter set
$\underline{\gamma_{k}}$. Then $X_{k}^{(n)}\rightarrow X_{k}$ weakly in
Skorohod space as $n \rightarrow\infty$.
\end{theo}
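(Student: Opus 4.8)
The plan is to prove convergence by an induction on the number of levels $k$, exploiting the inductive (time-change) structure of both $X_k^{(n)}$ and $X_k$. The base case $k=1$ is exactly the convergence of the REM-like trap model to the K process established in~\cite{kn:fm}, so I would cite that and focus on the inductive step. Assume $X_{k-1}^{(n)}\Rightarrow X_{k-1}$ in $D(\bar{\mathbb N}_*^{k-1},[0,\infty))$; the goal is to upgrade this to convergence of $X_k^{(n)}$. Since $X_k^{(n)}=(X_{k-1}^{(n)}(\varphi_k^{(n)}(t)),\xi_k^{(n)}(\varphi_k^{(n)}(t)))$, the strategy is: (i) show the ``clock'' $\Gamma_k^{(n)}$ (equivalently $\varphi_k^{(n)}$) converges together with $X_{k-1}^{(n)}$; (ii) show the label process $\xi_k^{(n)}$ composed with the clock converges; (iii) assemble these by a continuous-mapping/composition argument, using that composition with a time change is continuous at limit points which are a.s.\ continuity points of the limiting clock (Remark~\ref{atom_b}).

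For step (i), the key object is the measure $\mu_k^{(n)}$ on $\mathbb R^+$: one restricts to a compact time window $[0,T']$ (chosen as in Lemma~\ref{hypercube} so that $\Theta$ exhausts $[0,T]$), couples the finite-volume Poisson processes $\mathcal N_1^{(n)},\dots,\mathcal N_k^{(n)}$ and exponentials $\mathcal T_l^{(n)}$ with their infinite-volume counterparts (they are literally restrictions, since the first $M_j^{(n)}$ timelines can be taken as the first $M_j^{(n)}$ of the infinite family), and shows $\Gamma_k^{(n)}\to\Gamma_k$ in an appropriate sense (e.g.\ vague convergence of $\mu_k^{(n)}$ together with tightness of total masses on compacts, the latter controlled by Lemma~\ref{fin} and~(\ref{em2})). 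The delicate point is that in finite volume there are the \emph{extra marks} $\mathcal R_l^{(n)}$ (one per constancy interval of $X_{l-1}^{(n)}$), which have no analogue in the infinite-volume process; conditions~(\ref{em4}) and~(\ref{em3}) are precisely what is needed to show that the total $\mu_k^{(n)}$-mass carried by extra marks (and, more generally, by marks attached to ``large'' labels, i.e.\ labels exceeding a truncation level) is asymptotically negligible on compact time intervals, uniformly enough to not affect the limit. So the bulk of the work is a careful estimate: decompose $\Gamma_k^{(n)}$ into the contribution of marks with all labels $\le m$ and the rest; the first piece matches the corresponding piece of the K process by the induction hypothesis plus continuous mapping, and the second piece is shown to be small in probability on $[0,T']$ via first-moment bounds whose leading terms are exactly the left-hand sides of~(\ref{em4})--(\ref{em3}) (and the tail of~(\ref{eq:fin})).

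For step (ii), the labels $\xi_k^{(n)}$ at Poisson marks are i.i.d.\ uniform on $\{1,\dots,M_k^{(n)}\}$, which converge in distribution to uniform on $\mathbb N_*$ (all mass escaping to larger and larger integers is consistent with $X_{k,k}=\infty$ off $\mathcal S_k$), while the labels at extra marks are $U_k(s)$, again uniform on $\{1,\dots,M_k^{(n)}\}$; the point is that whenever $\varphi_k^{(n)}(t)$ lands on an extra mark or on a Poisson mark with large label, the limiting value is $\infty$ in the $k$-th coordinate, matching the definition of $X_k$ off $\mathcal S_k$. The metric $d$ on $\bar{\mathbb N}_*^k$ (with $\infty^{-1}=0$) and the Skorohod metric~(\ref{eq:met1}) are tailor-made so that this ``mass escaping to infinity'' is benign. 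I would also need Lemma~\ref{hypercube} and its finite-volume counterpart Lemma~\ref{hypercubeprob} (referenced in the excerpt) to guarantee consistency of the coordinates of the limit (Remark~\ref{nullmeasure2}.4), so that the limiting object is genuinely the K process and not some spurious configuration with a finite coordinate following an infinite one.

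The main obstacle I anticipate is exactly the control of the extra marks and of marks with large labels over a compact time window: one must show not only that each contributes negligibly in expectation, but that this is strong enough (together with a suitable truncation-and-diagonal argument, letting $m\to\infty$ slowly as $n\to\infty$) to conclude Skorohod convergence of the composed processes. The reason~(\ref{em4}) and~(\ref{em3}) look so intricate is that the constancy intervals of $X_{l-1}^{(n)}$ — whose left endpoints are the extra marks of level $l$ — have a nested combinatorial structure across levels, so the count of relevant extra marks within a fixed time window involves a telescoping product over intermediate levels $p=1,\dots,j-1$; the bookkeeping needed to identify the relevant expectations with the displayed sums, and then to propagate the resulting smallness through the $k$-fold composition, is where the real technical weight lies. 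Tightness in Skorohod space, by contrast, should be comparatively routine once the clocks are controlled, since $\bar{\mathbb N}_*^k$ is compact and one only needs to rule out oscillations, which follows from the a.s.\ continuity of the limiting clocks at deterministic times (Remark~\ref{atom_b}) plus the negligibility estimates.
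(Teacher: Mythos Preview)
Your proposal is correct in its overall architecture and matches the paper's approach closely: induction on $k$ with the base case from~\cite{kn:fm}, a coupling via common Poisson processes and exponentials, the decomposition of $\Gamma_k^{(n)}$ into low-label contributions (which match the limit by the induction hypothesis and the first part of~(\ref{em2})) versus high-label and extra-mark contributions (shown negligible via first-moment bounds that reduce exactly to~(\ref{em4})--(\ref{em3}) and the tail of~(\ref{em2})), and the appeal to Lemmas~\ref{hypercube} and~\ref{hypercubeprob} to handle the coordinate consistency. The one substantive difference is in the final assembly step: you propose an abstract composition/continuous-mapping argument together with a separate tightness verification, whereas the paper works directly in the Skorohod metric by constructing explicit time distortions $\lambda^{(n')}$. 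Concretely, the paper identifies, for a truncation level $\tilde m$, the finite collection of ``rank-$\tilde m$ constancy intervals'' of $X_k$ and of $X_k^{(n')}$ at each level, shows via the clock convergence that their endpoints match asymptotically, and defines $\lambda^{(n')}$ to map these endpoints to one another and interpolate linearly; this yields $\phi(\lambda^{(n')})\to0$ and $\sup_{u\le T}\rho(X_k^{(n')},X_k,\lambda^{(n')},u)\le 1/m$ directly. The paper's route sidesteps the well-known subtlety that composition $(f,\varphi)\mapsto f\circ\varphi$ is not continuous on $D\times D$ in the $J_1$ topology when $f$ has jumps (as $X_{k-1}$ does), and also avoids a separate tightness argument altogether, since the explicit distortion gives convergence in probability on subsequences outright. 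Your route can presumably be made to work, but you should be aware that the composition step is not as automatic as Remark~\ref{atom_b} alone would suggest.
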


We will see (from the proofs) that conditions (\ref{em1})--(\ref{em3})
have the following significance. Obviously, (\ref{em1}) means that we
are taking an infinite volume limit. Equation (\ref{em2}) implies that
the contributions coming from the Poisson marks to the construction of
$X_{k}^{(n)}$ converge (in a uniform way) to the respective
contributions of
(Poisson) marks %to that
of $X_{k}$. Finally, as will be seen in the arguments below,
(\ref{em4})--(\ref{em3}) imply the negligibility of the total
contribution of the extra marks entering $X_{k}^{(n)}$. [Poisson and
extra marks were introduced in the paragraph before~(\ref{xi}) above.]
It may be readily checked that in general neither are conditions
(\ref{em1})--(\ref{em3}) equivalent, nor do they follow from previous
conditions; in the generality of the statement of Theorem~\ref{GK},
indeed, they need to be separately imposed.
%, but they may be combined as follows.
%For $j=2,\ldots,k$ fixed and $1\leq l\leq j-1$, $1\leq p\leq j$ let
% \begin{cases}
% \gamma_{p}^{(n)}(x|_{p}),&\mbox{ if } p<l,\\
% \frac1{M_{l+1}},&\mbox{ if } p=l,\\
% \frac1{M_{p+1}}+\gamma_{p}^{(n)}(x|_{p}),&\mbox{ if } p>l.
% \end{cases}
%Then (\ref{em1}-\ref{em3}) together is equivalent to

%re5.5 #&#
%
\begin{rmk}
\label{cond} One way to gain insight into the meaning
of~(\ref{em4})--(\ref{em3}) is as follows. In order to have a single
condition, we start by writing the sum over $\mathcal{M}|_{j-1}$
in~(\ref{em4}) as sum over $\mathcal{M}|_{j}$ with an extra term of
$1/M_j$ multiplying each summand. We then sum the resulting
expression\vadjust{\goodbreak}
to the one on the left of~(\ref{em3}), getting
%e5.20 #&#
%e5.17 ###
\begin{eqnarray}
\label{em6}
&& \frac{1}{\prod_{p=1}^{j-1}M_{p+1}} %\sum_{x_1=1 }^{M_1} \ldots
\ \sum_{l=1}^{j-1}
\ \sum_{x|_j\in\mathcal{M}|_{j}}
\ \prod_{p=1}^{l-1}
\bigl(M_{p+1} \gamma_{p}^{(n)}(x|_{p})
\bigr)\nonumber
\\
&&\hspace*{114pt}{}\times
\prod_{p = l + 1}^{j-1}\bigl(1 +
M_{p+1} \gamma_{p}^{(n)}(x|_{p})\bigr)
\\
&&\hspace*{149pt}{}\times\biggl(\frac{1}{M_{j}}+\gamma_{j}^{(n)}(x|_{j})
\biggr).\nonumber
\end{eqnarray}

Dividing now the double product inside the double sum in~(\ref{em6}) by
the product outside the double sum, and defining
%e5.21 #&#
\begin{eqnarray}
\bar\gamma_{j,l}^{(n)}(x|_{j})&:=&\prod
_{p=1}^{l-1}\gamma_p^{(n)}(x|_{p})
\frac{1}{M_{l+1}}
\nonumber\\[-10pt]\\[-10pt]
&&{}\times \prod_{p = l + 1}^{j-1} \biggl(
\frac{1}{M_{p+1}}+\gamma_{p}^{(n)}(x|_{p})
\biggr) \biggl(\frac{1}{M_{j}}+\gamma_{j}^{(n)}(x|_{j})
\biggr),\nonumber
\end{eqnarray}
we find that~(\ref{em4})--(\ref{em3}) are equivalent to the following
condition. For $1\leq l<j\leq k$, as $n\to\infty$
%e5.22 #&#
%e5.18 ###
\begin{equation}
\label{em5} \sum_{x|_{j}\in\mathcal{M}|_{j}}\bar\gamma_{j,l}^{(n)}(x|_{j})
\to0.
\end{equation}

Compare $\bar\gamma_{j,l}^{(n)}(x|_{j})$ to $\bar\gamma
_j^{(n)}(x|_{j})$ and~(\ref{em5}) to the second condition
in~(\ref{em2}).
\end{rmk}

The remainder of this section is organized as follows. We briefly start
below, in this same subsection, with the proof of Theorem~\ref{GK}. The
full proof will require a number of auxiliary results, which we collect
in Section~\ref{prel} below, before proceeding with the proof in
Section~\ref{pf} after that. And, as we already mentioned,
Section~\ref{GREM} is devoted to the proof of Theorem~\ref{scal}.

\begin{pf*}{Proof of Theorem~\ref{GK}}
We will argue by induction, using coupled versions of $X_{k}^{(n)}$ and
$X_{k}$, and show convergence in probability for a subsequence. The
coupling is going to be given by using common Poisson processes $\{
N^{(x_i,i)}, x_i \in\mathbb{N}_{*}, i = 1,\ldots,k\}$ and common
exponential variables $\{ T_{s}^{(i)}, s \in\mathbb{R}^{+}, i =
1,\ldots,k\}$ in the construction of $X_{k}^{(n)}$ and $X_{k}$. The
notation is detailed at the beginning of Section~\ref{pf}.
%,up to the statement of Lemma \ref{extramark}.
It will be clear that the same can be done for every subsequence of
$(n)$, and that the limiting distribution for each subsubsequence does
not depend on the subsequence. This then implies weak convergence of
the original sequence.

Lemma 3.11 of~\cite{knfm} establishes the (convergence in probability;
actually a.s. convergence) result for $k=1$ and $\gamma_{1}^{(n)}(x)$
not depending on $n$ as soon as $x\leq M_1$. This result (convergence
in probability) holds (with minor changes in argumentation, as sketched
in the proof of Theorem 5.2 of~\cite{knfm}) in our case as well. It is
also part of the argumentation of Lemma 3.11 and Theorem 5.2
of~\cite{knfm}, and can also be readily checked independently, that for
every $r\in[0,\infty)$,
%e5.23 #&#
%e5.19 ###
\begin{equation}
\label{eqgn} \Gamma_{1}^{(n)}(r) \rightarrow
\Gamma_{1}(r)
\end{equation}
in probability as $n\to\infty$.

As part of our induction argument, we will then assume that for
$j=1,\ldots,k-1$ and every $r\in[0,\infty)$,
%e5.24 #&#
%e5.25 #&#
%e5.21 ###
%e5.20 ###
\begin{eqnarray}
\label{convxj} X_{j}^{(n)}&\rightarrow& X_{j},
\\
\label{convgj} \Gamma_{j}^{(n)}(r)& \rightarrow&
\Gamma_{j}(r)
\end{eqnarray}
as $n \rightarrow\infty$ almost surely, possibly over a subsequence.
\end{pf*}

%%%%%%%%%%%%%%%%%%%%%%%%%%%%%%%%%%%%%%%%%%%%%%%%%%%%%%%%%%%%%%%%%%%%%%%%%%%%%%%%%%%%
%%%%%%%%%%%%%%%%%%%%%%%%%%%%% PRELIMINARIES
%%%%%%%%%%%%%%%%%%%%%%%%%%%%%%%%%%%%%%%
%%%%%%%%%%%%%%%%%%%%%%%%%%%%%%%%%%%%%%%%%%%%%%%%%%%%%%%%%%%%%%%%%%%%%%%%%%%%%%%%%%%%

%s5.3 #&#
%s5.3 ###
\subsection{\texorpdfstring{Auxiliary results for the proof of Theorem~\protect\ref{GK}}
{Auxiliary results for the proof of Theorem 5.4}}\label{prel}

We assume throughout that the hypotheses of Theorem~\ref{GK} are in
force.

Our first auxiliary result establishes that the contribution of extra
marks and their descendants to $X_{j}^{(n)}$ is negligible as $n
\rightarrow\infty$. That is the content of Lemma~\ref{extramark}. To be
precise, let $\mathcal{E}_{2}^{(n)} = \mathcal{R}_{2}^{(n)}$ and for
$3\leq i \leq k$
%e5.26 #&#
%e5.22 ###
\begin{equation}
\mathcal{E}_{i}^{(n)} = \mathcal{R}_{i}^{(n)}
\cup\bigl\{ s \in\mathcal{S}_{i}^{(n)}\dvtx
\varphi_{i - 1}^{(n)}(s) \in\mathcal{E}_{i-1}^{(n)}
\bigr\}.
\end{equation}
$\mathcal{E}_{i}^{(n)}$ represents the extra marks of the $i$th level
and the \textit{descendants} of extra marks of previous levels in the
$i$th level (i.e., Poisson marks belonging to constancy intervals
originating from extra marks of the previous level or descendants of
extra marks from levels before that).

%le5.6 #&#
%
\begin{Lem} \label{extramark}
Assume that the induction hypotheses~(\ref{convxj})--(\ref{convgj})
hold. Then, for every $r > 0$ and $j=2,\ldots,k$, we have that:
\begin{longlist}[(a)]
\item[(a)] $\min\{ \xi_j^{(n)}(s)\dvtx  s \in\mathcal{E}_{j}^{(n)}
    \cap[0,r] \} \rightarrow\infty$ in probability as $n
    \rightarrow\infty$.

\item[(b)] $\mu^{(n)}_j(\mathcal{E}^{(n)}_{j} \cap[0,r])
\rightarrow0$ in
probability as $n \rightarrow\infty$.
\end{longlist}
\end{Lem}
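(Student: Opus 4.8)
\textbf{Proof plan for Lemma~\ref{extramark}.}
The plan is to prove both statements simultaneously by induction on $j$, leveraging the recursive structure $\mathcal{E}_{i}^{(n)} = \mathcal{R}_{i}^{(n)} \cup \{ s \in \mathcal{S}_{i}^{(n)}: \varphi_{i-1}^{(n)}(s) \in \mathcal{E}_{i-1}^{(n)}\}$. The base case $j=2$ is where $\mathcal{E}_2^{(n)} = \mathcal{R}_2^{(n)}$ consists purely of the extra marks at left endpoints of constancy intervals of $X_1^{(n)}$ inside $[0,r]$. For part (a) with $j=2$: each such mark carries an independent uniform label $U_2(s)$ on $\{1,\ldots,M_2\}$, and by Remark~\ref{geonumber} the number of constancy intervals of $X_1^{(n)}$ up to $\varphi_2^{(n)}(r)$ is controlled (it is dominated by a sum over $x_1$ of $N^{(x_1)}_{\varphi_1^{(n)}(r)}$-many geometric-type contributions); since there are $M_2\to\infty$ possible labels and the number of extra marks grows polynomially in the $M_p$ for $p\le 1$ only through the $\gamma$-sums, the minimum label tends to infinity in probability — this is a balls-in-boxes estimate: $P(\min \le m) \le (\text{number of extra marks in }[0,r])\cdot m/M_2 \to 0$. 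For part (b) with $j=2$: $\mu_2^{(n)}(\mathcal{R}_2^{(n)} \cap [0,r]) = \sum_{s\in\mathcal{R}_2^{(n)}\cap[0,r]} \gamma_2^{(n)}(X_1^{(n)}(s),\xi_2^{(n)}(s)) T_s^{(2)}$, and I would bound its conditional expectation (given the first-level data) by $\sum_{x_1}\sum_{x_2} \gamma_2^{(n)}(x_1,x_2)/M_2 \cdot (\text{number of extra marks with first coordinate }x_1)$; taking expectations and using that the expected number of level-1 constancy intervals with value $x_1$ is of order $1 + M_2\gamma_1^{(n)}(x_1)$ times $\varphi_1$-dependent factors, the dominant term is exactly the $j=2$ instance of the left-hand side of~(\ref{em3}) (together with~(\ref{em4}) for the label-counting), which vanishes by hypothesis.

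For the inductive step, assume (a) and (b) hold for $j-1$. I split $\mathcal{E}_j^{(n)}\cap[0,r]$ into two pieces: the genuinely new extra marks $\mathcal{R}_j^{(n)}\cap[0,r]$, and the ``inherited'' Poisson marks $\{s\in\mathcal{S}_j^{(n)}: \varphi_{j-1}^{(n)}(s)\in\mathcal{E}_{j-1}^{(n)}\}\cap[0,r]$. For the inherited piece, part (a) at level $j-1$ says that all the relevant $\mathcal{E}_{j-1}^{(n)}$ marks have labels going to infinity, so via the coupling their first $j-1$ coordinates — i.e. $X_{j-1}^{(n)}(\varphi_{j-1}^{(n)}(s))$ — escape to infinity; the new $j$-th coordinate $\xi_j^{(n)}(s)$ is again uniform on an $M_j\to\infty$ alphabet and a balls-in-boxes bound (using (b) at level $j-1$ to control how much $\Gamma_{j-1}^{(n)}$-time, hence how many level-$j$ Poisson marks, sits above $\mathcal{E}_{j-1}^{(n)}$) handles it. For the new extra marks $\mathcal{R}_j^{(n)}$, these are the left endpoints of all level-$(j-1)$ constancy intervals in $[0,\varphi_j^{(n)}(r)]$ — not just those above extra marks — so part (a) here requires only the $M_j\to\infty$ uniformity of $\xi_j^{(n)}$ plus a count of level-$(j-1)$ constancy intervals, and part (b) requires bounding $\sum_{s\in\mathcal{R}_j^{(n)}\cap[0,r]}\gamma_j^{(n)}(X_{j-1}^{(n)}(s),\xi_j^{(n)}(s))T_s^{(j)}$ in conditional expectation. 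The expected number of level-$(j-1)$ constancy intervals with prescribed first $j-1$ coordinates $x|_{j-1}$ is, by iterating Remark~\ref{geonumber}, a sum over a ``branching level'' $l$ of products $\prod_{p<l} M_{p+1}\gamma_p^{(n)}(x|_p)\cdot\prod_{l<p\le j-1}(1+M_{p+1}\gamma_p^{(n)}(x|_p))$ — precisely the structure appearing in~(\ref{em4}) and~(\ref{em3}); dividing by $M_j$ from the uniform label and summing against $\gamma_j^{(n)}$ yields exactly the vanishing expression~(\ref{em3}) at index $j$ (respectively~(\ref{em4}) for the label count in part (a)), and Markov's inequality converts convergence of expectations to convergence in probability.

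The one genuine subtlety — and the step I expect to be the main obstacle — is the bookkeeping of the constancy-interval counts: I need a clean, inductively-verifiable formula (or at least a sharp upper bound) for the conditional expectation, given the data of levels $1,\ldots,j-2$, of the number of level-$(j-1)$ constancy intervals carrying a given coordinate string, and I must check that summing the resulting bound against $1/\prod_{p=2}^j M_p$ and against $\gamma_j^{(n)}$ reproduces \emph{exactly} the left-hand sides of~(\ref{em4}) and~(\ref{em3}) with their convention $\prod_{p=j}^{j-1}(\cdots)=1$ etc. This is the place where the otherwise opaque form of those hypotheses is forced on us, so the calculation must be done carefully rather than sketched; the key mechanism is that a level-$(j-1)$ constancy interval is either an extra-mark interval (one per level-$(j-2)$ interval, giving the $+1$ in each factor) or a Poisson-mark interval (geometric count with mean $M_{p+1}\gamma_p^{(n)}$), and tracking which level $l\le j-1$ is the first at which a Poisson (rather than extra) mark occurs produces the sum over $l$ and the split product. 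Everything else — the escape-to-infinity of labels, the passage from conditional to unconditional expectation using that the $T_s$ are mean-one and independent of the geometry (cf.\ the proof of Lemma~\ref{fin}), and Markov's inequality — is routine.
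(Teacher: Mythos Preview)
Your overall mechanism --- reduce both claims to first-moment estimates and then apply a union bound (for~(a)) and Markov's inequality (for~(b)), identifying the resulting expressions with the left-hand sides of~(\ref{em4}) and~(\ref{em3}) --- is exactly what the paper does. The organizational difference is that the paper does \emph{not} induct on $j$. For each fixed $j$ it first replaces $[0,r]$ by $[0,\Theta_{j-1}^{(n)}(r)]$ (using the theorem's induction hypothesis~(\ref{convgj})), relabels all marks of $\mathcal{H}_1^{(n)},\ldots,\mathcal{H}_j^{(n)}$ as a branching forest, and decomposes $\mathcal{E}_j^{(n)}$ according to the \emph{smallest} level $l+1$ at which $i_{l+1}=1$. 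This yields closed formulas for $E[K_j^{(n)}(r)]$ and $E[\mu_j^{(n)}(\mathcal{E}_j^{(n)}\cap[0,\Theta_{j-1}^{(n)}(r)])]$ that are \emph{exactly} $r$ times the expressions in~(\ref{em4}) and~(\ref{em3}); part~(b) is then Markov, and part~(a) follows from Jensen applied to $E[(1-L/M_j)^{K_j^{(n)}}]$ (your union bound would serve equally well).

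Your inductive split $\mathcal{E}_j^{(n)}=\mathcal{R}_j^{(n)}\cup\{\text{inherited}\}$ is a different decomposition (by whether $i_j=1$, i.e.\ the \emph{last} index), and two things in your plan don't land. First, the expected count $E[|\mathcal{R}_j^{(n)}|]=E[|\mathcal{H}_{j-1}^{(n)}|]$ is $\frac{r}{\prod_{p=2}^{j-2}M_p}\sum_{x|_{j-2}}\prod_{p=1}^{j-2}(1+M_{p+1}\gamma_p^{(n)}(x|_p))$, which after expansion gives only the terms of~(\ref{em4}) indexed by subsets $T\subset\{1,\ldots,j-2\}$; the terms with $j-1\in T$ are exactly what the inherited piece contributes, so your ``yields exactly~(\ref{em4})'' is not correct for $\mathcal{R}_j^{(n)}$ alone. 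This is harmless for part~(a) --- it is a sub-sum of nonnegative terms, and your handling of the inherited count via part~(b) at level $j-1$ is valid (the expected number of inherited Poisson marks is $M_j\cdot\mu_{j-1}^{(n)}(\mathcal{E}_{j-1}^{(n)})$ and the $M_j$ cancels in the union bound). Second, and this is the real gap, the induction does not close for part~(b) on the inherited piece: knowing $\mu_{j-1}^{(n)}(\mathcal{E}_{j-1}^{(n)})\to 0$ does not control $\mu_j^{(n)}(\text{inherited})$, because the conditional expectation given level-$(j-1)$ data is $\sum_{s'\in\mathcal{E}_{j-1}^{(n)}}|I(s')|\sum_{x_j}\gamma_j^{(n)}(X_{j-1}^{(n)}(s'),x_j)$, and nothing in the hypotheses bounds $\sum_{x_j}\gamma_j^{(n)}(x|_{j-1},x_j)$ uniformly in $x|_{j-1}$ (condition~(\ref{em2}) controls $\bar\gamma_j^{(n)}$, not $\gamma_j^{(n)}$ with the first $j-1$ coordinates frozen). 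To close this you have to compute $E[\mu_j^{(n)}(\text{inherited})]$ by unwinding the branching structure --- which is precisely the paper's direct forest calculation --- so the induction on $j$ ends up doing no work for part~(b).
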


\begin{pf}
For $r > 0$, $j=1,\ldots,k$, let
%e5.27 #&#
%e5.23 ###
\begin{equation}
\label{then} \Theta_{j}^{(n)}(r):=\Gamma_{j}^{(n)}
\circ\cdots\circ\Gamma_{1}^{(n)}(r)
\end{equation}
and define\vspace*{-2pt}
$K_{j}^{(n)}(r):=|\mathcal{E}_{j}^{(n)}\cap[0,\Theta_{j-1}^{(n)}(r)]|$,
where (here) $|\cdot|$ stands for cardinality. An evaluation of
$K_{j}^{(n)}(r)$ will play a crucial role in the proof. We begin with
that.

It follows from induction hypothesis~(\ref{convgj}) that for
$j=1,\ldots,k-1$,\break $\Gamma_{j}^{(n)}(r)\to\infty$ as $r\to\infty$
in probability, uniformly in $n$. So it is enough to consider
$\mathcal{E}^{(n)}_{j} \cap[0,\Theta_{j-1}^{(n)}(r)]$ instead of
$\mathcal{E}^{(n)}_{j} \cap[0,r]$. In\vspace*{-1pt} order to evaluate
the cardinality of that set, as well as its contribution to
$\mu_{j}^{(n)}([0,\Theta_{j-1}^{(n)}(r)])$, we start by describing the
structure of $\mathcal{H}_1^{(n)}: = \mathcal{S}_1^{(n)}$,
$\mathcal{H}_2^{(n)}: = \mathcal{S}_2^{(n)}
\cup\mathcal{R}_2^{(n)},\ldots, \mathcal{H}_k^{(n)}: =
\mathcal{S}_k^{(n)} \cup\mathcal{R}_k^{(n)}$; at the same time, we will
relabel the marks of those sets conveniently.

Each $s_1 \in\mathcal{S}_{1}^{(n)}$ can be put in a one-to-one
correspondence with its label $\xi^{(n)}_1(s_1)=x_1$ and \textit{index}
$i_1(s_1) = i_1\in\mathbb{N}_{*}$ via\vspace*{-1pt} the relation $s_1 =
\sigma_{i_1}^{(x_1,1)}$. Using this correspondence, we see that to each
mark $s_1$ of $\mathcal{S}_{1}^{(n)}$ there corresponds an interval
$I_{n}^{(x_1,i_1)}$ of $\mathbb{R}^{+}$ of length $L_{n}^{(x_1, i_1)} =
\gamma_1^{(n)}(x_1) T^{(x_1,i_1)}$. Such intervals form a partition of
$\mathbb{R}^{+}$, and the random variables involved are independent
when we vary $s_1$.

Now to each $s_1 \in\mathcal{S}_{1}^{(n)}$, there corresponds marks of
$\mathcal{S}_{2}^{(n)}$ belonging to the respective interval
$I_{n}^{(x_1,i_1)}$, whose cardinality is a geometric random variable
$G^{(x_1, i_1)}$ with mean $M_2 \gamma_1^{(n)}(x_1)$, plus a mark of
$\mathcal{R}_2^{(n)}$ at the left endpoint of
$I_{n}^{(x_1,i_1)}$---recall Remark~\ref{geonumber}. Each such mark
will be identified with $(x_1,i|_2)$, where $(x_1,i_1)$ is the
identifier of $s_1$, and $i_2\in\{1,\ldots,G^{(x_1,i_1)} + 1\}$, and we
attach to it a random variable $U^{(x_1,i|_2)}$ with uniform
distribution in $\mathcal{M}_2$, which corresponds to
$\xi^{(n)}_2(s_2)$, for $(x_1,i|_2)\equiv s_2
\in\mathcal{S}_2^{(n)}\cup\mathcal{R}_2^{(n)}$. We will\vspace*{1pt}
identify the unique mark of $\mathcal{R}_2^{(n)}$ at the left endpoint
of $I_n^{(x_1,i_1)}$ with $(x_1,i_1, 1)$.
%Notice that given $\mathcal{S}_1^{(n)}$, the variables $\{
%G^{(x_1,i_1)}, U^{(x_1,i|_2)}\}$ are independent.

We now proceed inductively. For $3\leq j \leq k$, we assume we have
identified each mark of $\mathcal{S}_{j-1}^{(n)} \cup
\mathcal{R}_{j-1}^{(n)}$ as $(x_1,i|_{j-1})$, with $x_1 \in
\mathcal{M}_{1}$, $i_1 \geq1$, $i_l = 1,\ldots,1+G^{(x_1,i|_{l-1})}$,
$l = 2,\ldots,j-1$, where, for $l\geq3$, $G^{(x_1,i|_{l-1})}$ is
geometric with mean $M_{l} \gamma_{l-1}^{(n)}(x_1, U^{(x_1,
i|_2)},\ldots,U^{(x_1, i|_{l-1})})$, with $U^{(x_1, i|_j)} \sim$
Uniform$(\mathcal{M}_{j})$. The random variables of
\[
\mathcal{U}_{j-1}:=\bigl\{ U^{(x_1, i|_l)}\dvtx  l = 2,\ldots,j-1;
x_1,i_1,\ldots,i_l \geq1 \bigr\}
\]
are independent, and, given $\mathcal{U}_{j-1}$, so are those of
\[
\bigl\{ G^{(x_1,i|_l)}\dvtx  l = 1,\ldots,j-1; x_1,i_1,
\ldots,i_l \geq1 \bigr\}.
\]
%
%with or without conditioning on $\mathcal{U}_{j-1}$.
Notice that $G^{(x_1,i|_j)}$ is independent of $U^{(x_1, i|_l)}$ as
soon as $j<l$. Here $i_{j-1} = 1 $ means $(x_1,i|_{j-1}) \in\mathcal
{R}_{j-1}^{(n)}$; otherwise, $(x_1,i|_{j-1})
\in\mathcal{S}_{j-1}^{(n)}$. Then to each mark $(x_1,i|_{j-1})$ there
corresponds an interval $I_n^{(x_1,i|_{j-1})}$ of $\mathbb{R}^{+}$ of
length
\[
L_n^{(x_1,i|_{j-1})}:=\gamma_{j-1}^{(n)}
\bigl(x_1,U^{(x_1, i|_2)},\ldots,U^{(x_1,i|_{j-1})}\bigr)
T^{(x_1,i|_{j-1})}
\]
with $\{ T^{(x_1,i|_{j-1})}\}$ i.i.d. mean 1 exponential random
variables independent of $\{ G^{(x_1, i|_l)}$, $U^{(x_1, i|_l)} \}$,
$l=1,\ldots,j-1$, such that $\{I_n^{(x_1,i|_{j-1})}\}$ is a partition
of $\mathbb{R}^{+}$. The mark of $\mathcal{R}_j^{(n)}$ placed at the
left end of $I_n^{(x_1,i|_{j-1})}$ is labeled $(x_1,i|_{j-1}, 1)$, and
the marks of $\mathcal{S}_j^{(n)} \in I_n^{(x_1,i|_{j-1})}$, if any,
are labeled $(x_1,i|_j), i_j = 2,\ldots, 1+G^{(x_1,i|_{j-1})}$, with
$G^{(x_1,i|_{j-1})}$ a geometric random variable with mean $M_{j}
\gamma_{j-1}^{(n)}(x_1, U^{(x_1, i|_2)}, \ldots, U^{(x_1,
i|_{j-1})})$. The random variables in $\{G^{(x_1, i|_{j-1})} \}$
are\vspace*{1pt} independent among themselves, and independent of the
previous random variables. Finally,\vspace*{1pt} we assign to each
$(x_1,i|_j)$ a random variable $U^{(x_1,i|_j)}$ uniformly distributed
in $\mathcal{M}_j$, corresponding to $\xi^{(n)}_j(s_j)$, for
%$s_j \equiv\in$, and to $U_j(s_j)$, if
$(x_1,i|_j)\equiv s_j\in\mathcal{S}_j^{(n)}\cup\mathcal{R}_j^{(n)}$,
with $\{U^{(x_1,i|_j)}\}$ independent\vspace*{1pt} among themselves,
and independent of previous random variables.

With this representation, we have labeled the marks of
$\mathcal{H}_j^{(n)}\cap[0,\Theta_{j-1}^{(n)}(r)]$, $j=1,\ldots,k$, as
$(x_1,i|_{j})$, $x_1 = 1,\ldots,M_{1}$; $i_1 = 1,\ldots,N_r^{(x_1)};
i_l = 1,\ldots,\break  1+G^{(x_1,i|_{l-1})}, 2\leq l \leq j$, with
$G^{(x_1,i|_l)}$ geometric %ally distributed
with mean $M_2 \gamma^{(n)}_1(x_1)$ when $l=1$, and with mean $M_{l+1}
\gamma^{(n)}_{l}(x_1,U^{(x_1,i_1)}, \ldots,U^{(x_1,i|_l)})$, when $l =
2,\ldots,j$, respectively. $U^{(x_1,i|_l)}$ is uniformly distributed on
$\mathcal{M}_l$, $l = 2,\ldots,j$. The random variables in the family
\[
\mathcal{U}_j:= \bigl\{ U^{(x_1,i|_l)}; x_1,i_1,
\ldots,i_l \geq1, l = 2,\ldots,j \bigr\}
\]
are independent, and given $\mathcal{U}_j$ so are those in
\[
\bigl\{ G^{(x_1,i|_l)}; x_1,i_1,
\ldots,i_l \geq1, l = 1,\ldots,j \bigr\}.
\]
Notice that, as before, $G^{(x_1,i|_j)}$ is independent of $U^{(x_1,
i|_l)}$ as soon as $j<l$.
%with or without conditioning on $\mathcal{U}_j$.

The\vspace*{-2pt} marks of $\mathcal{E}_j^{(n)} \cap[0,
\Theta_{j-1}^{(n)}(r)]$ are those $(x_1,i|_{j})$ as above for which
$i_l = 1 $ for some $l = 2,\ldots,j$. In order\vspace*{-1pt} to write
an expression for $K_j^{(n)}(r)$, we first view $\mathcal{E}_j^{(n)}
\cap[0, \Theta_{j-1}^{(n)}(r)]$ as the\vspace*{-1pt} leaves of a forest
(see Figure~\ref{figforest}), the distinct trees of which have
the\vspace*{1pt} marks labeled $(x_1,i|_{l}, 1), l = 1,\ldots,j-1$,
$i_1=1,\ldots,N_r^{(x_1)}$; $i_l=2,\ldots,\tilde
G^{(x_1,i|_{l-1})}:=1+G^{(x_1,i|_{l-1})}, l=2,\ldots,j-1$, as roots;
the tree rooted at $(x_1,i|_{l}, 1)$ consisting of, besides the root,
marks whose labels form the set
%e5.28 #&#
%e5.24 ###
\begin{equation}
\label{frak} %\mathfrak{T}_j^{(x_1,i|_l)}:=\{(x_1,i|_j):
%i|_j=(i|_l,1,i_{l+2},
%i_m=1,\ldots,\tilde{G}^{(x_1,i|_{m-1})}, m=l+2,\ldots,j\}.
\mathfrak{T}_j^{(x_1,i|_l)}:=
\bigl\{\bigl(x_1,i|^l_j\bigr)\dvtx  %i|_j=(i|_l,1,i_{l+2},
i_m=1,\ldots,\tilde{G}^{(x_1,i|^l_{m-1})}, m=l+2,\ldots,j
\bigr\},
\end{equation}
where, for $1\leq h\leq j$,
%e5.29 #&#
%e5.25 ###
\begin{equation}
\label{ilh} i|^l_h= \cases{ i|_l, &
\quad if $h\leq l$,
\vspace*{2pt}\cr
(i|_l,1), &\quad if $h=l+1$,
\vspace*{2pt}\cr
(i|_l,1,i_{l+2},\ldots,i_h), &\quad if
$h>l+1$.}
\end{equation}
%
%$i|^l_h$ stands for $(i|_l,1,i_{l+2},\ldots,i_h)$.
%where $\tilde{G}^{(x_1,i|_{l-1})}:=1+G^{(x_1,i|_{l-1})}$.
Equation (\ref{frak}) is well defined whenever $l<j-1$; otherwise, each
of the above mentioned trees consists of its root only.

%re5.7 #&#
%
\begin{rmk}
\label{rmkfrak}
For each $l=1,\ldots,j-1$, the roots of the above
trees, namely the points labeled $(x_1,i|_{l}, 1)$, with $x_1$ and
$i|_{l}$ as described above, represent the extra marks of level $l+1$,
as described in the paragraph before the one containing~(\ref{xi}), now
with a labeling suited to the computations to be performed below. The
sites other than themselves on the trees of which they are the roots
represent their descendants, corresponding to either Poissonian or
extra marks originating of an extra mark at some level above.
\end{rmk}

%We note that $\mathfrak{T}_j^{(x_1,i|_{j-1})}$.
%f4 #&#
%
%f4 ###
\begin{figure}

\includegraphics{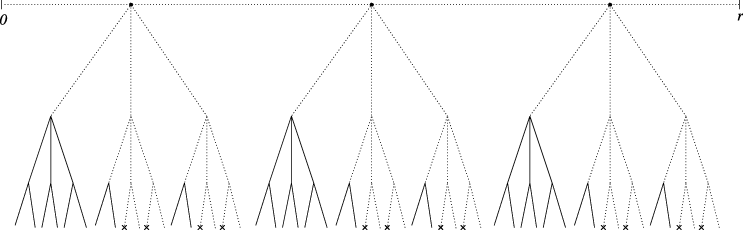}

\caption{Schematic\vspace*{-1pt} representation of portions of $\bigcup
_{i=1}^4\mathcal{H}_i^{(n)}$
and the forest whose leaves are $\mathcal{E}_4^{(n)} \cap[0, \Theta
_{3}^{(n)}(r)]$. Full points on horizontal dotted line
represent $\mathcal{H}_1^{(n)}\cap[0, r]$. Successive generations of
trees attached to each point of
$\mathcal{H}_1^{(n)}\cap[0,r]$ represent\vspace*{-1pt}
$\mathcal{H}_2^{(n)}\cap[0,\Theta_{1}^{(n)}(r)]$, $\mathcal
{H}_3^{(n)}\cap[0, \Theta_{2}^{(n)}(r)]$
and $\mathcal{H}_4^{(n)}\cap[0,\Theta_{3}^{(n)}(r)]$, respectively.
Forests of extra marks and their descendants are shown in full lines
and crosses. (Actual picture should look less regular,
since the degrees of the vertices of the trees are independent random
variables, which should be moreover large for large $n$.)}\label{figforest}
\end{figure}

Then the number of elements of $\mathcal{E}_j^{(n)} \cap[0, \Theta
_{j-1}^{(n)}(r)]$ on
the leaves of $\mathfrak{T}_j^{(x_1,i|_l)}$, $l=1,\ldots,j-2$, $j\geq
3$, is given by
%e5.30 #&#
%e5.26 ###
\begin{equation}
\label{eqnum} \sum_{i_{l+2}=1}^{\tilde{G}^{(x_1,i|_{l+1}^l)}}\cdots\sum
_{i_{j}=1}^{\tilde{G}^{(x_1,i|_{j-1}^{l})}}1
\end{equation}
and their contribution to $\mu_j^{(n)}([0,\Theta_{j-1}^{(n)}(r)])$
amounts to
%e5.31 #&#
%e5.27 ###
\begin{equation}
\label{eqcont} \sum_{i_{l+2}=1}^{\tilde{G}^{(x_1,i|_{l+1}^l)}}\cdots\sum
_{i_{j}=1}^{\tilde{G}^{(x_1,i|_{j-1}^l)}} \gamma^{(n)}_{j}
\bigl(x_1,U^{(x_1,i|^l_2)}, \ldots,U^{(x_1,i|^l_j)}\bigr)
T^{(x_1,i|^l_j)},
\end{equation}
where
%$\gamma^{(n)}_{k}(x_1,U^{(x_1,i_1,i_2)},\ldots, U^{(x_1,i_1,
%$U^{(x_1,i|_l,\tilde{G}^{(x_1,i|_l)}, i_{l+2})},
$\{ T^{(x_1,i|_j)} \}$ are i.i.d. mean 1 exponential random variables,
independent of all other random variables.
So %for $j\geq3$,
the size of
$\mathcal{E}_j^{(n)} \cap[0, \Theta_{j-1}^{(n)}(r)]$ is given by
%e5.32 #&#
%e5.28 ###
\begin{equation}
\label{extranumber} \qquad K_j^{(n)}(r) = \sum
_{l=1}^{j-1}\sum_{x_1=1}^{M_{1}}
\sum_{i_1=1}^{N_r^{(x_1,1)}}\sum
_{i_{2}=2}^{\tilde G^{(x_1,i_1)}}\cdots\sum_{i_{l}=2}^{\tilde
G^{(x_1,i|_{l-1})}}
\sum_{i_{l+2}=1}^{\tilde{G}^{(x_1,i|_{l+1}^l)}}\cdots\sum
_{i_{j}=1}^{\tilde{G}^{(x_1,i|^l_{j-1})}}1 %\sum_{i_{l+2}=1}^{
\end{equation}
and its contribution to $\mu_j^{(n)}([0,\Theta_{j-1}^{(n)}(r)])$
amounts to
%e5.33 #&#
%e5.29 ###
\begin{eqnarray}\label{extracont}
&& \mu_j^{(n)}\bigl(\mathcal{E}_{j}^{(n)}
\cap\bigl[0,\Theta_{j-1}^{(n)}(r)\bigr]\bigr)\nonumber
\\
&&\qquad = \sum_{l=1}^{j-1}\sum
_{x_1=1}^{M_{1}} \sum_{i_1=1}^{N_r^{(x_1,1)}}
\sum_{i_{2}=2}^{\tilde G^{(x_1,i_1)}} \cdots\sum
_{i_{l}=2}^{\tilde G^{(x_1,i|_{l-1})}} \sum_{i_{l+2}=1}^{\tilde
{G}^{(x_1,i|_{l+1}^l)}}
\cdots
\\
&&\qquad\quad \sum_{i_{j}=1}^{\tilde
{G}^{(x_1,i|^l_{j-1})}}
\gamma^{(n)}_{j}\bigl(x_1,U^{(x_1,i|^l_2)},
\ldots,U^{(x_1,i|^l_j)}\bigr) T^{(x_1,i|^l_j)},\nonumber
\end{eqnarray}
where for $l=1$ the sum $\sum_{i_{2}=2}^{\tilde G^{(x_1,i_1)}}$ should
be absent in~(\ref{extranumber})--(\ref{extracont}); for $l=j-1$, the
expressions in~(\ref{eqnum})--(\ref{eqcont}) should be interpreted as
$1$ and
\[
\gamma^{(n)}_{j}\bigl(x_1,U^{(x_1,i|_2)},
\ldots,U^{(x_1,i|_{j-1},1)}\bigr) T^{(x_1,i|_{j-1},1)},
\]
respectively, and for $j=2$~(\ref{extranumber})--(\ref{extracont})
should be, respectively, interpreted as
\begin{eqnarray*}
%{align}\label{extranumber2}
K_2^{(n)}(r) &=& \sum
_{x_1=1}^{M_{1}}\sum_{i_1=1}^{N_r^{(x_1,1)}}1,
\\
\mu_2^{(n)}\bigl(\mathcal{E}_{2}^{(n)}
\cap\bigl[0,\Theta_{1}^{(n)}(r)\bigr]\bigr) &=&\sum
_{x_1=1}^{M_{1}}\sum_{i_1=1}^{N_r^{(x_1,1)}}
\gamma^{(n)}_{2}\bigl(x_1,U^{(x_1,i_1,1)}
\bigr) T^{(x_1,i_1,1)},
\end{eqnarray*}
%
%{align}
from which we readily get %from~(\ref{extranumber2})
%e5.34 #&#
\begin{eqnarray}
\label{extranumber2a}
E\bigl(K_2^{(n)}(r)\bigr) &=&
rM_{1},
\nonumber\\[-8pt]\\[-8pt]
E\bigl(\mu_2^{(n)}\bigl(
\mathcal{E}_{2}^{(n)} \cap\bigl[0,\Theta_{1}^{(n)}(r)
\bigr]\bigr)\bigr) &=&\frac{r}{M_2}\sum_{x|_2\in\mathcal{M}|_2}
\gamma^{(n)}_{2}(x|_2).\nonumber
\end{eqnarray}

For $j\geq3$, by conditioning on $N_r^{(x_1,1)},
G^{(x_1,i_1)},\ldots,G^{(x_1,i|_{j-2})}$, $U^{(x_1,i|_2)},
\ldots,\break  U^{(x_1,i|_{j-2})}$ (in the case of $j=3$,
$N_r^{(x_1,1)}, G^{(x_1,i_1)}$), and integrating on the
remaining\vadjust{\goodbreak} random variables, we get
from~(\ref{extranumber})
\begin{eqnarray*}
&& \sum_{l=1}^{j-1}
\sum_{x_1=1}^{M_{1}}
\cdots
\sum_{i_{l}=2}^{\tilde{G}{}^{(x_1,i|_{l-1})}}
\sum_{i_{l+2}=1}^{\tilde{G}^{(x_1,i|_{l+1}^l)}}\cdots
\\
&&\qquad \sum_{i_{j-1}=1}^{\tilde{G}^{(x_1,i|^l_{j-2})}}\frac{1}{M_{j-1}} \sum_{x_{j-1}=1}^{M_{j-1}}
\bigl(1 + M_{j} \gamma^{(n)}_{j-1}
\bigl(x_1,U^{(x_1,i|^l_2)},\ldots, U^{(x_1,i|^l_{j-2})},x_{j-1}
\bigr)\bigr).
\end{eqnarray*}
%
%{align}
Proceeding inductively, we find
%e5.35 #&#
%e5.30 ###
\begin{eqnarray}\label{esp1}
&& {E}\bigl(K_j^{(n)}(r)\bigr)\nonumber
\\[-2pt]
&&\qquad  =
\frac{r}{\prod_{p=1}^{j-2}M_{p+1}}
\sum_{l=1}^{j-1}\
\sum_{x|_{_{j-1}}\in\mathcal{M}|_{_{j-1}}}
\prod_{p=1}^{l-1} M_{p+1}
\gamma_{p}^{(n)}(x|_{p})
\\
&&\hspace*{160pt}{}\times  \prod
_{p = l + 1}^{j-1} \bigl(1 + M_{p+1}
\gamma_{p}^{(n)}(x|_{p})\bigr).\nonumber
\end{eqnarray}

Similarly,
%e5.36 #&#
%e5.31 ###
\begin{eqnarray}\label{esp2}
\qquad && E\bigl(\mu_j^{(n)}\bigl(\mathcal{E}_{j}^{(n)}
\cap\bigl[0,\Theta_{j-1}^{(n)}(r)\bigr]\bigr)\bigr)\nonumber
\\
&&\qquad =\frac{r}{\prod_{p=1}^{j-1}M_{p+1}}\sum
_{l=1}^{j-1}\ \sum_{x|_{j}\in
\mathcal{M}|_{j}}
\gamma_{j}^{(n)}(x|_j) \prod
_{p=1}^{l-1} M_{p+1}\gamma_{p}^{(n)}(x|_{p})
\\
&&\hspace*{184pt}{}\times \prod_{p = l + 1}^{j-1} \bigl(1 +
M_{p+1}\gamma_{p}^{(n)}(x|_{p})
\bigr).\nonumber
\end{eqnarray}

We are now ready to argue our claims.
\begin{longlist}[(a)]
\item[(a)] Fix $j \in\{2,\ldots,k \}, r >0 $ and $L >0$. Then, using
    Jensen's inequality,
%e5.37 #&#
%e5.32 ###
\begin{eqnarray}\label{exp1}
\nonumber
&&P \bigl(\min\bigl\{\xi_j^{(n)}(s)\dvtx  s \in
\mathcal{E}_j^{(n)} \cap\bigl[0,\Theta_{j-1}^{(n)}(r)
\bigr] \bigr\} > L \bigr)
\\
&&\qquad = \sum_{l=0}^{\infty} \biggl(1 -
\frac{L}{M_j} \biggr)^{l} P \bigl(K_{j}^{(n)}(r)=l
\bigr) = E \biggl[ \biggl(1 - \frac{L}{M_{j}} \biggr)^{K_{j}^{(n)}(r)}
\biggr]
\\
&&\qquad \geq \biggl(1 - \frac{L}{M_{j}} \biggr)^{E(K_{j}^{(n)}(r))}
= \biggl\{ \biggl(1 - \frac{L}{M_{j}} \biggr)^{M_{j}/L}
\biggr\}^{L
[E(K_{j}^{(n)}(r))/M_j ]}.\nonumber
\end{eqnarray}
Using~(\ref{esp1}), we find that the expression within square brackets
on the right-hand side of~(\ref{exp1}) is the expression
in~(\ref{em4}), which goes to 0 as $n\rightarrow\infty$ by hypothesis.
From~(\ref{em1}), we have that the expression within curly brackets on
the right-hand side of~(\ref{exp1}) is bounded away from zero as
$n\rightarrow\infty$. It immediately follows that the probability on
the left-hand side of~(\ref{exp1}) tends to 1 as $n\rightarrow\infty$,
and part (a) of Lemma~\ref{extramark} is established.
%and part $a)$ of the lemma follows.

\item[(b)] Given $\varepsilon> 0$, by Markov's inequality,
%e5.38 #&#
%e5.33 ###
\begin{equation}
\qquad P \bigl( \mu_{j}^{(n)} \bigl(\mathcal{E}_j^{(n)}
\cap\bigl[0, \Theta_{j-1}^{(n)}(r)\bigr] \bigr) > \varepsilon
\bigr) \leq\varepsilon^{-1} {E} \bigl(\mu_{j}^{(n)}
\bigl(\mathcal{E}_j^{(n)} \cap\bigl[0, \Theta_{j-1}^{(n)}(r)
\bigr] \bigr) \bigr)
\end{equation}
\end{longlist}%
and the result follows from~(\ref{esp2}) and~(\ref{em3}).
\end{pf}

%re5.8 #&#
%
\begin{rmk}\label{continuity}
%Assume that the induction hypotheses~(\ref{convxj}-\ref{convgj}) hold.
If $X_{k-1}^{(n)} \rightarrow X_{k-1}$ a.s. as $n \rightarrow\infty$ in
Skorohod space, then, by Proposition 5.2 in Chapter~3 of~\cite{knek}
(page 118), we have that
%e5.39 #&#
%e5.34 ###
\begin{equation}
\lim_{n \rightarrow\infty} X_{k-1}^{(n)}(s) = \lim
_{n \rightarrow\infty} X_{k-1}^{(n)}(s-) =
X_{k-1}(s)
\end{equation}
for all $s \geq0 $ which is a continuity point of $X_{k-1}$.
\end{rmk}

%le5.9 #&#
%
\begin{Lem} \label{clock}
Assume that the induction hypotheses~(\ref{convxj})--(\ref{convgj})
hold, and let $r \in[0, \infty)$ be fixed.
%Suppose that $X_{k-1}^{(n)} \rightarrow X_{k-1}$ a.s. as $n
Then
$\Gamma_{k}^{(n)}(r) \rightarrow\Gamma_{k}(r)$
in probability as $n \rightarrow\infty$.
\end{Lem}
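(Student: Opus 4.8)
The plan is to decompose $\Gamma_k^{(n)}(r)$ according to the origin of the marks carrying the mass, separating the ``genuine'' Poisson marks (together with their descendants) from the extra marks and their descendants, and to show the former converges to $\Gamma_k(r)$ while the latter is negligible. Concretely, recalling the set $\mathcal{E}_k^{(n)}$ from Lemma~\ref{extramark}, write
\begin{equation}\nonumber
\Gamma_k^{(n)}(r)=\mu_k^{(n)}\!\big([0,r]\setminus\varphi_k^{(n)}(\mathcal{E}_k^{(n)})^{c\ldots}\big)+\text{(contribution of }\mathcal{E}_k^{(n)}\text{)},
\end{equation}
or more cleanly: change variables through $\varphi_{k-1}^{(n)}$ so that $\Gamma_k^{(n)}(r)=\mu_k^{(n)}([0,\Theta_{k-1}^{(n)}(r)])$ in the notation of~(\ref{then}), split this mass into the part sitting on $\mathcal{E}_k^{(n)}\cap[0,\Theta_{k-1}^{(n)}(r)]$ and the part sitting on the complementary (genuine) marks. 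By Lemma~\ref{extramark}b the extra-mark part tends to $0$ in probability. So it remains to handle the genuine part and to control the change of time variable.

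For the genuine part, the idea is that the Poisson marks of level $k$ lying in constancy intervals of $X_{k-1}^{(n)}$ that do \emph{not} originate from extra marks are in one-to-one correspondence (under the coupling using common $N^{(x_k,k)}$ and common $T_s^{(k)}$) with marks contributing to $\Gamma_k(r)$ for the K-process. First I would fix a truncation level $m$ and restrict attention to marks with label $\le m$ of level $k$, i.e.\ the finite set $\mathcal{\tilde S}_k^{(m)}$ of~(\ref{tsj}); there are only finitely many of these in $[0,\Theta_{k-1}^{(n)}(r)]$ and, by induction hypotheses~(\ref{convxj}-\ref{convgj}) together with Remark~\ref{continuity}, each such mark $s$ has $X_{k-1}^{(n)}(s)\to X_{k-1}(s)$ (the mark $s$ is a.s.\ a continuity point of $X_{k-1}$ by Remark~\ref{nullmeasure2}.3 and the fact that $s\in\mathcal{S}_k$), whence by~(\ref{em2}) the mass $\gamma_k^{(n)}(X_{k-1}^{(n)}(s),\xi_k(s))T_s^{(k)}$ it carries converges to $\gamma_k(X_{k-1}(s),\xi_k(s))T_s^{(k)}$. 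Summing the finitely many such convergences gives convergence of the level-$k$ mass on $\mathcal{\tilde S}_k^{(m)}$. The tail (labels $>m$) is controlled uniformly in $n$ by the second part of~(\ref{em2}), which forces $\sum_{x_k>m}\bar\gamma_k^{(n)}(x|_k)$ small uniformly in $n$; taking expectations as in the proof of Lemma~\ref{fin} (see~(\ref{etheta}-\ref{et1})) bounds the expected tail mass, so a standard $\varepsilon$–$m$ argument closes it.

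The remaining technical point is the composition: I would write $\Gamma_k^{(n)}(r)$ literally as $\Gamma_k^{(n)}$ evaluated at the genuine-mark structure over $[0,\Theta_{k-1}^{(n)}(r)]$, but since $\Gamma_k^{(n)}$ integrates against a structure that itself depends on $X_{k-1}^{(n)}$, the clean way is to keep everything in the original time variable $[0,r]$ as in~(\ref{Theta}): enumerate level-$1$ marks in $[0,r]$, and for each the contribution to $\Gamma_k^{(n)}(r)$ is a subtree sum; induction on $k$ then reduces the statement to the $k-1$ level convergence~(\ref{convgj}) plus the level-$k$ label analysis just described, plus Lemma~\ref{extramark}. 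I expect the main obstacle to be bookkeeping: carefully matching, under the coupling, the genuine level-$k$ marks of the finite-volume process with those of the K-process (a constancy interval of $X_{k-1}^{(n)}$ may split or merge relative to one of $X_{k-1}$ near discontinuity points), and making sure this mismatch is supported on a set of $s$ of vanishing $\mu$-mass — which is exactly where Remark~\ref{nullmeasure2} and the a.s.\ continuity of $X_{k-1}$ at the (countably many, fixed once the Poisson marks are fixed) relevant points $s$ do the work.
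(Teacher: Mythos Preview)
Your plan is essentially the paper's own proof: split off the extra-mark mass via Lemma~\ref{extramark}.b, truncate the level-$k$ label, pass to the limit on the resulting finite sum using the induction hypothesis~(\ref{convxj}) together with Remark~\ref{continuity}, and bound the tail in expectation via~(\ref{em2}). Two small corrections worth noting: first, $\Gamma_k^{(n)}(r)=\mu_k^{(n)}([0,r])$ by definition, not $\mu_k^{(n)}([0,\Theta_{k-1}^{(n)}(r)])$, so no change of variables is needed --- the paper works directly on $[0,r]$ and only enlarges to $[0,\Theta_{k-1}(r_0')]$ (resp.\ $[0,\Theta_{k-1}^{(n)}(r_0)]$) when it comes time to bound the tail expectation; second, for that tail bound the paper truncates \emph{all} coordinates, choosing $m_1,\dots,m_k$ sequentially (see~(\ref{m1}--\ref{mk})) and decomposing the tail into sets $\mathcal A_1,\dots,\mathcal A_k$ according to which coordinate of $X_{k-1}$ is first large, rather than truncating only $x_k$ --- this is what makes the expectation computation explicit and the uniform-in-$n$ control from~(\ref{em2}) immediate.
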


\begin{pf}
The strategy is to separate the contribution of the extra marks and
Poissonian marks with large labels from the remaining contributions.
The convergence of the remaining main (as it turns out) contributions
to the corresponding infinite volume contributions follows readily from
the first part of~(\ref{em2}), since there is only a fixed finite
number of contributions involved. The negligibility of the total
contribution of extra marks was established in Lemma~\ref{extramark}
above, so we are left with establishing that of the total contribution
of high label marks. Details follow.

Let $\Psi_{k}^{(n)}(r)=\mu_{k}^{(n)} ((\mathcal
{S}_{k}^{(n)}\setminus\mathcal{E}_{k}^{(n)})\cap[0,r] )$.
Then
%e5.40 #&#
%e5.35 ###
\begin{equation}
\label{eqbr} \bigl|\Gamma_{k}(r) - \Gamma_{k}^{(n)}(r)\bigr|
\leq\bigl|\Gamma_{k}(r) - \Psi_{k}^{(n)}(r)\bigr| +
\mu_{k}^{(n)}\bigl(\mathcal{E}_{k}^{(n)}
\cap[0,r]\bigr).
\end{equation}

By Lemma~\ref{extramark}(b), the second term on the right
of~(\ref{eqbr}) goes to 0 in probability as $n \rightarrow\infty$. We
will argue that so does the first one. In preparation for this, let us
take, for given $\varepsilon> 0$, $m_1 \in\mathbb{N}_*$ such that
%e5.41 #&#
%e5.36 ###
\begin{equation}
\label{m1} \sum_{x_1 > m_1} \sum
_{x|^2 \in\mathbb{N}_*^{k-1}}\bar\gamma_{k}(x|_k) \leq
\varepsilon/k
\end{equation}
[recall the notation introduced around~(\ref{eqbm}) above]. Proceeding
inductively, with $m_1, \ldots,m_{j-1}$, $2\leq j\leq k-1$, fixed,
choose $m_j$ such that
%e5.42 #&#
%e5.37 ###
\begin{equation}
\label{mj} \sum_{x_1=1}^{m_1}\cdots\sum
_{x_{j-1}=1}^{m_{j-1}}\ \sum
_{x_{j}>m_j}\sum_{x|^{j+1} \in\mathbb{N}_*^{j+1}}^{\infty}
\bar\gamma_{k}(x|_k)\leq\varepsilon/k
\end{equation}
and with $m_1, \ldots,m_{k-1}$ fixed, choose $m_k$ such that %$m_k''$
%such that
%e5.43 #&#
%e5.38 ###
\begin{equation}
\label{mk} \sum_{x_1=1}^{m_1}\cdots\sum
_{x_{k-1}=1}^{m_{k-1}}\sum
_{x_k >
m_k}^{\infty}\bar\gamma_{k}(x|_k)
\leq\varepsilon/k.
\end{equation}
This procedure is well defined by~(\ref{eqfin}).

%Now given $\eta>0$, we may
%choose $m_{k}' = m_k'(m_1)$ such that outside an event of probability
%at most $\eta/2$
%we have that
% \sup\{ X_{k-1,1}(s)\dvtx  s \in\tilde{\mathcal{S}}_k^{(m_k')}\cap[0,r]
%(recall notation introduced in~(\ref{tsj}) above).

%Let $m_k = \max\{m_k', m_k'' \}$.
%By the Remark~\ref{continuity} for $n$ large enough we have
%$X_{k-1}^{(n)}(s) = X_{k-1}(s)$ for all
%$s\in\tilde{\mathcal{S}}_k^{(m_{k})}\cap[0,r]$ almost surely.
Going back to the first term on the right of~(\ref{eqbr}), we have that
%e5.44 #&#
%e5.45 #&#
%e5.46 #&#
%e5.41 ###
%e5.40 ###
%e5.39 ###
\begin{eqnarray}
\qquad &&\bigl|\Gamma_{k}(r) - \Psi_{k}^{(n)}(r)\bigr|\nonumber
\\
\label{rs1} &&\qquad \leq\biggl\llvert\sum_{s \in\tilde{\mathcal{S}}_k^{(m_k)}
\cap[0,r]} \bigl\{
\gamma_k\bigl(X_{k-1}(s), \xi_k(s)\bigr)-\gamma
_k^{(n)}\bigl(X_{k-1}^{(n)}(s),
\xi_k(s)\bigr) \bigr\} T_s^{(k)}\biggr\rrvert
\\
\label{rs2} &&\quad\qquad{} +\sum_{s\in(\mathcal{S}_k\setminus\tilde
{\mathcal{S}}_k^{(m_k)})\cap[0,r]}\gamma_k
\bigl(X_{k-1}(s),\xi_k(s)\bigr) T_s^{(k)}
\\
\label{rs3} &&\quad\qquad{} +\sum_{s\in(\mathcal{S}_k^{(n)}\setminus\tilde
{\mathcal{S}}_k^{(m_k)})\cap[0,r]} \gamma_k^{(n)}
\bigl(X_{k-1}^{(n)}(s),\xi_k(s)\bigr)
T_s^{(k)}
\end{eqnarray}
[recall~(\ref{tsj})]. We have used here %, and will again below,
the fact that, given the coupled construction
of $X^{(n)}_k$ and $X_k$, we have that $\xi^{(n)}_k(s)=\xi_k(s)$ for
Poisson points $s$.

The expression on the right-hand side of~(\ref{rs1}) converges to 0 in
probability as $n$ increases because it is a finite sum, and from the
first part of~(\ref{em2}), and since $X_{k-1}^{(n)}(s) = X_{k-1}(s)$
for all $s \in\tilde{\mathcal{S}}_k^{(m_{k})} \cap[0,r]$ for all large
enough $n$ almost surely, as follows from Remark~\ref{continuity}
above, and the fact that the points of
$\tilde{\mathcal{S}}_k^{(m_{k})}$ are almost surely continuity ponts of
$X_{k-1}$.

Let $B$ and $C$ denote the expressions in~(\ref{rs2}) and~(\ref{rs3}),
respectively.

To analyze $B$, we start by taking, for given $\eta> 0$, $r_0'$ such that
%e5.47 #&#
%e5.42 ###
\begin{equation}
P\bigl(\Theta_{k-1}\bigl(r'_0\bigr) > r\bigr)
\geq1 - \eta.
\end{equation}
This is allowed by the second assertion of Lemma~\ref{fin}. Now letting
\[
\mathcal{A}_0 = \bigl(\mathcal{S}_k \setminus\tilde{\mathcal{
S}}_k^{(m_k)}\bigr) \cap\bigl[0,\Theta_{k-1}
\bigl(r_0'\bigr)\bigr],
\]
we define
%e5.48 #&#
\begin{eqnarray}
\mathcal{A}_1 &=& \bigl\{s \in\mathcal{A}_0\dvtx
X_{k-1,1}(s) > m_1\bigr\},\nonumber
\\
\mathcal{A}_2 &=& \bigl\{s \in\mathcal{A}_0\dvtx
X_{k-1,1}(s) \leq m_1, X_{k-1,2}(s) >
m_2\bigr\},\nonumber
\\
&&\hspace*{95pt}\vdots \nonumber\\[-8pt]\\[-8pt]
\mathcal{A}_{k-1} &=& \bigl\{s \in\mathcal{A}_0\dvtx
X_{k-1,1}(s) \leq m_1, \ldots,X_{k-1,k-2}(s)\leq
m_{k-2},\nonumber
\\
&&\hspace*{136pt}X_{k-1,k-1}(s)> m_{k-1} \bigr\},\nonumber
\\
\mathcal{A}_{k} &=& \bigl\{s \in\mathcal{A}_0\dvtx
X_{k-1,1}(s) \leq m_1, \ldots,X_{k-1,k-1}(s) \leq
m_{k-1} \bigr\}.\nonumber
\end{eqnarray}
Notice that by the definition of $\mathcal{A}_0$ and $\tilde
{\mathcal{S}}_k^{(m_k)}$ (recall~(\ref{tsj}) above), we have that
$\xi_k(s)>m_k$. Then, outside an event of probability at most $\eta$,
we have that
%e5.49 #&#
%e5.43 ###
\begin{equation}
B \leq\sum_{i = 1}^{k} \sum
_{s \in\mathcal{A}_i} \gamma_{k}\bigl(X_{k-1}(s),
\xi_k(s)\bigr) T_s^{(k)}
\end{equation}
and following the same arguments used to establish~(\ref{etheta}), and
using\break  \mbox{(\ref{m1})--(\ref{mk}),} we conclude that
\begin{eqnarray*}
&& E\Biggl( \sum_{i = 1}^{k} \sum
_{s \in\mathcal{A}_i} \gamma_{k}\bigl(X_{k-1}(s),
\xi_k(s)\bigr) T_s^{(k)}\Biggr)
\\
&&\qquad  \leq r_0'\Biggl[\sum_{x_1 > m_1} \sum
_{x|^2 \in\mathbb{N}_*^{k-1}} \bar\gamma_{k}(x|_k)
+\sum_{x_1=1}^{m_1} \sum
_{x_2 > m_2}\sum_{x|^3\in\mathbb
{N}_*^{k-2}} \bar
\gamma_{k}(x|_k)
\\
&&\hspace*{111pt}{} +\cdots+\sum_{x_1=1}^{m_1}
\cdots\sum_{x_{k-1}=1}^{m_{k-1}}\sum
_{x_k > m_k} \bar\gamma_{k}(x|_k)\Biggr]
\leq r_0' \varepsilon,
\end{eqnarray*}
where the first inequality comes from ignoring the restriction $s\in
\mathcal{A}_0$ in the first $k-1$ terms
of the sum in $i$.
This shows that $B \rightarrow0$ in probability as $\epsilon\to0$,
since $\eta$ is arbitrary.
%(A more direct argument would apply to argue this claim, without
%recourse
%to the partition $\mathcal{A}_i$, but the ingredients of the above
%argument enters in the one
%below to control $C$.)

The analysis of $C $ is similar, with the dependence on $n$ as a
distinctive aspect.
%first define, for $i = 1,\ldots,k$,
From induction hypothesis~(\ref{convgj}) and the second assertion of
Lemma~\ref{fin}, given $\eta> 0$, there exists $r_0$ such that for all
$n$ sufficiently large,
%e5.50 #&#
%e5.44 ###
\begin{equation}
P\bigl(\Theta_{k-1}^{(n)}(r_0) > r\bigr) \geq1 -
\eta;
\end{equation}
recall~(\ref{then}). With such $r_0$ and the above choice of
$m_1,\ldots,m_k$, define
%e5.51 #&#
%e5.45 ###
\begin{eqnarray}
\mathcal{A}_0^{(n)}&=& \bigl(\mathcal{S}_k^{(n)}
\setminus\mathcal{S}_k^{(m_k)} \bigr)\cap\bigl[0,
\Theta_{k-1}^{(n)}(r_0)\bigr],\nonumber
\\
\mathcal{A}_1^{(n)} &=& \bigl\{s \in\mathcal{A}_0^{(n)}\dvtx
X_{k-1,1}^{(n)}(s) > m_1\bigr\},\nonumber
\\
\mathcal{A}_2^{(n)} &=& \bigl\{s \in\mathcal{A}_0^{(n)}\dvtx
X_{k-1,1}^{(n)}(s) \leq m_1, X_{k-1,2}^{(n)}(s)
> m_2\bigr\},
\\
&&\hspace*{102pt}\vdots\nonumber
\\
\mathcal{A}_k^{(n)}&=&\bigl\{s\in\mathcal{A}_0^{(n)}\dvtx
X_{k-1,1}^{(n)}(s)\leq m_1,\ldots,X_{k-1,k-1}^{(n)}(s)
\leq m_{k-1} \bigr\}.\nonumber
\end{eqnarray}
Then
%e5.52 #&#
%e5.46 ###
\begin{equation}
P \Biggl(C\leq\sum_{i=1}^{k-1}\sum
_{s\in\mathcal{A}_i^{(n)}}\gamma_{k}^{(n)}
\bigl(X_{k-1}^{(n)}(s),\xi_k(s)\bigr)T_s^{(k)}
\Biggr) %\rightarrow1
\geq1-\eta
\end{equation}
for all $n$ large enough. By~(\ref{em2}), we may take $n$ suficiently
large such that
%e5.53 #&#
%e5.47 ###
\begin{eqnarray}\label{mkn}
\Biggl\llvert\sum_{x_1 > m_1}^{M_1}
\sum_{x|^2\in\mathcal
{M}|^2}\bar{\gamma}_{k}^{(n)}(x|_k)
- \sum_{x_1 > m_1} \sum_{x|^2\in\mathbb{N}_*^{k-1}}
\bar{\gamma}_{k}(x|_k)\Biggr\rrvert &\leq&\varepsilon/ k,\hspace*{-36pt}\nonumber
\\
\Biggl\llvert\sum_{x_1 = 1}^{m_1}
\sum_{x_2=m_2+1}^{M_2}\sum
_{x|^3\in\mathcal{M}|^3}\bar{\gamma}_{k}^{(n)}(x|_k)
-\sum_{x_1=1}^{m_1}
\sum_{x_2>m_2}\sum_{x|^3\in\mathbb
{N}_*^{k-2}}
\bar{\gamma}_{k}(x|_k)\Biggr\rrvert&\leq&\varepsilon/k,\hspace*{-36pt}
\\
&&\hspace*{-140pt}\vdots \nonumber
\\
\Biggl\llvert\sum
_{x_1=1}^{m_1}\cdots\sum_{x_{k-1}=1}^{m_{k-1}}
\sum_{x_k = m_k+1}^{M_k} \bar{\gamma}_{k}^{(n)}(x|_k)
- \sum_{x_1=1}^{m_1}\cdots\sum
_{x_{k-1}=1}^{m_{k-1}} \sum_{x_k > m_k}
\bar{\gamma}_{k}(x|_k)\Biggr\rrvert&\leq&\varepsilon/ k.\hspace*{-36pt}\nonumber
\end{eqnarray}
Following the same arguments used to establish~(\ref{etheta}), %and
%Lemma~\ref{extramark},
and using~(\ref{m1})--(\ref{mk}) and~(\ref{mkn}), we get
\begin{eqnarray*}
&& E\Biggl( \sum_{i = 1}^{k-1} \sum
_{s \in\mathcal{A}_i^{(n)}} \gamma_{k}^{(n)}
\bigl(X_{k-1}^{(n)}(s),\xi_k(s)\bigr)
T_s^{(k)}\Biggr)
\\
&&\qquad \leq r_0'
\Biggl[\sum_{x_1 = m_1+1}^{M_1}\sum
_{x|^2 \in\mathcal
{M}|^2} \bar\gamma_{k}^{(n)}(x|_k)
+ \sum_{x_1=1}^{m_1} \sum
_{x_2 = m_2+1}^{M_2} \sum_{x|^3\in\mathcal{M}|^3}
\bar\gamma_{k}^{(n)}(x|_k)
\\
&&\hspace*{126pt}{} + \cdots+\sum
_{x_1=1}^{m_1} \cdots\sum_{x_{k-1}=1}^{m_{k-1}}
\sum_{x_k = m_k+1}^{M_k} \bar\gamma_{k}^{(n)}(x|_k)
\Biggr]
\\
&&\qquad \leq\bigl(r_0+r_0'\bigr) \varepsilon.
\end{eqnarray*}
This shows that $C \rightarrow0$ in probability as we first take $n
\rightarrow\infty$ and then $\epsilon\to0$, since $\eta$ is arbitrary,
thus completing the proof.
%$m_{k} \rightarrow\infty$.
\end{pf}

%co5.10 #&#
%
\begin{cor} \label{cor}
The result of Lemma~\ref{clock} still holds if we replace $r$ on the
left-hand side by $r_n$ with $r_n\to r$ as $n\to\infty$, with $(r_n)$ a
deterministic sequence.
\end{cor}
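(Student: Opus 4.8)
The plan is to revisit the proof of Lemma~\ref{clock} and check that every step is robust under replacing the fixed level $r$ by a convergent deterministic sequence $r_n\to r$. The only places where $r$ enters the argument are: (i) the choice of cutoff levels $r_0'$ and $r_0$ controlling the tails of $\Theta_{k-1}$ and $\Theta_{k-1}^{(n)}$; (ii) the finite-sum convergence on the right-hand side of~(\ref{rs1}); and (iii) the expectation bounds for the quantities $B$ and $C$, which are proportional to $r_0'$ and $r_0+r_0'$. I would address these one at a time.

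First I would note that since $r_n\to r$, for every $\delta>0$ we have $r_n\le r+\delta$ for all $n$ large enough. So in the choice of $r_0$ with $P(\Theta_{k-1}^{(n)}(r_0)>r_n)\ge1-\eta$, it suffices to pick $r_0$ so that $P(\Theta_{k-1}^{(n)}(r_0)>r+\delta)\ge1-\eta$ for all large $n$ — which is exactly what Remark~\ref{gfin} and induction hypothesis~(\ref{convgj}) provide, just as in the original proof. The same remark handles $r_0'$ via $P(\Theta_{k-1}(r_0')>r+\delta)\ge1-\eta$. Thus the cutoff construction goes through verbatim with $r$ replaced by $r+\delta$; the final expectation bounds become $(r_0+r_0')\,\varepsilon$ as before, uniformly in $n$ large, and $\varepsilon,\eta$ are still arbitrary.

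For the finite-sum term~(\ref{rs1}): the relevant index set is $\mathcal{\tilde S}_k^{(m_k)}\cap[0,r_n]$, and since $r_n\to r$ and any fixed deterministic point (in particular $r$) is a.s.\ not a Poissonian mark, the symmetric difference $\mathcal{\tilde S}_k^{(m_k)}\cap([0,r_n]\triangle[0,r])$ is a.s.\ empty for all $n$ large; hence eventually the sum over $[0,r_n]$ equals the sum over $[0,r]$ and the original convergence-in-probability argument applies unchanged. For the tail terms $B$ and $C$, I would simply enlarge the domains: in the original proof $B$ is bounded (off an $\eta$-event) by a sum over marks in $[0,\Theta_{k-1}(r_0')]$ chosen so that $\Theta_{k-1}(r_0')>r$; here I instead require $\Theta_{k-1}(r_0')>r+\delta$, which dominates $r_n$ for $n$ large, and the expectation computation using~(\ref{m1}--\ref{mk}) is then identical, yielding $B\to0$ in probability. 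The treatment of $C$ is the same, now requiring $\Theta_{k-1}^{(n)}(r_0)>r+\delta$ eventually, which~(\ref{convgj}) guarantees.

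I do not anticipate a genuine obstacle here; the corollary is essentially a uniformity remark. The one point deserving a line of care is that the cutoffs $r_0,r_0'$ and the tail bounds must be chosen independently of $n$ (and before letting $n\to\infty$), which is already how the proof of Lemma~\ref{clock} is structured, so the slack $r_n\le r+\delta$ is absorbed at no cost. I would therefore present the proof as: fix $\delta>0$, observe $r_n\le r+\delta$ eventually, rerun the proof of Lemma~\ref{clock} with $r$ replaced by $r+\delta$ throughout (the finite-sum term handled via the a.s.\ eventual equality of the two index sets), conclude $\Gamma_k^{(n)}(r_n)\to\Gamma_k(r)$ in probability since $\delta,\varepsilon,\eta$ are arbitrary.
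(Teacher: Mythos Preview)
Your approach is correct in substance, but it takes a genuinely different route from the paper. You reopen the proof of Lemma~\ref{clock} and verify that each of its ingredients (the cutoffs $r_0,r_0'$, the finite sum~(\ref{rs1}), the tail terms $B$ and $C$, and the extra-mark estimate) survives the replacement $r\mapsto r_n$. This works, and your handling of the finite-sum term via the a.s.\ eventual equality of the Poisson index sets $\tilde{\mathcal S}_k^{(m_k)}\cap[0,r_n]$ and $\tilde{\mathcal S}_k^{(m_k)}\cap[0,r]$ is the right observation. One small slip: your summary line ``rerun the proof with $r$ replaced by $r+\delta$ throughout'' is not literally what you do (that would yield convergence to $\Gamma_k(r+\delta)$); the detailed plan above it is the accurate one, where the infinite-volume side stays at $r$ and only the finite-volume side and the cutoffs move.

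The paper's proof is shorter and treats Lemma~\ref{clock} as a black box. It writes
\[
|\Gamma_k^{(n)}(r_n)-\Gamma_k(r)|\le |\Gamma_k^{(n)}(r_n)-\Gamma_k^{(n)}(r)|+|\Gamma_k^{(n)}(r)-\Gamma_k(r)|,
\]
controls the second term directly by Lemma~\ref{clock}, and bounds the first by monotonicity: for $|r_n-r|<\delta$ it is at most $\Gamma_k^{(n)}(r+\delta)-\Gamma_k^{(n)}(r-\delta)$, which in turn is bounded by $\Gamma_k(r+\delta)-\Gamma_k(r-\delta)$ plus two more applications of Lemma~\ref{clock} at $r\pm\delta$. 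The a.s.\ continuity of $\Gamma_k$ at the deterministic point $r$ (Remarks~\ref{atom},~\ref{atom_b}) then lets $\delta\to0$. This argument is more modular and avoids revisiting the internals of Lemma~\ref{clock}; your approach, while heavier, has the virtue of making explicit exactly where uniformity in $r$ is needed.
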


\begin{pf}
Let us write
%e5.54 #&#
%e5.48 ###
\begin{equation}
\label{cor1} \bigl|\Gamma_k^{({n})}(r_n)-
\Gamma_k(r)\bigr|\leq\bigl|\Gamma_k^{({n})}(r_n)-
\Gamma_k^{({n})}(r)\bigr|+\bigl|\Gamma_k^{({n})}(r)-
\Gamma_k(r)\bigr|.
\end{equation}
Using the hypothesis and the monotonicity of $\Gamma_k^{({n})}$, given
$\delta>0$, we have that the first term on the right-hand side
of~(\ref{cor1}) is bounded above by $\Gamma_k^{({n})}(r+\delta
)-\Gamma_k^{({n})}(r-\delta)$ for all $n$ large enough, which is in
turn bounded above by
%e5.55 #&#
\begin{eqnarray}
\label{cor2}
&&\Gamma_k(r+\delta)-\Gamma_k(r-\delta)+\bigl|
\Gamma_k^{({n})}(r+\delta)-\Gamma_k(r+\delta)\bigr|
\nonumber\\[-8pt]\\[-8pt]
&&\qquad{} +\bigl|
\Gamma_k^{({n})}(r-\delta)-\Gamma_k(r-\delta)\bigr|.\nonumber
\end{eqnarray}

Let $\eta>0$ now be given. By Lemma~\ref{clock}, and
using~(\ref{cor1})--(\ref{cor2}), we find that
%e5.56 #&#
%e5.49 ###
\begin{equation}
\label{cor3} \qquad\quad\limsup_{n\to\infty}P\bigl(\bigl|\Gamma_k^{({n})}(r_n)-
\Gamma_k(r)\bigr|>\eta\bigr)\leq P\bigl(\Gamma_k(r+\delta)-
\Gamma_k(r-\delta)>\eta/3\bigr)
\end{equation}
and the result follows from $r$ being almost surely a continuity point
of $\Gamma_k$ (see Remarks~\ref{atom} and~\ref{atomb}), since $\delta$
is arbitrary.
\end{pf}

%re5.11 #&#
%
\begin{rmk}\label{rmktheta}
The same argument, of course, works in the case when $(r_n,r)$ are
random and independent of $(\Gamma^{({n})}_k,\Gamma_k)$ and $r_n\to r$
almost surely as $n\to\infty$. This can be applied to establish that
under the assumption of Lemma~\ref{clock}, we have that
%e5.57 #&#
%e5.50 ###
\begin{equation}
\label{eqtheta} \Theta_k^{({n})}(T) \rightarrow
\Theta_k(T)\qquad\mbox{as } {n} \rightarrow\infty,
\end{equation}
in probability for every $T\geq0$.
\end{rmk}

The next result is a finite volume version of Lemma~\ref{hypercube} in
the above section.

%le5.12 #&#
%
\begin{Lem} \label{hypercubeprob}
Given $k\geq2$, $n\geq$1, let $X_{k}^{(n)}$ be a trap model on
${\mathbb T}^{(n)}_k$. Suppose that the assumptions of Theorem~\ref{GK}
and the induction hypotheses (\ref{convxj})--(\ref{convgj}) all hold.
Then given $m\geq1$ and $\epsilon>0$, there exists $\tilde m=\tilde
m^{(k)}\geq1$ such that the event
%e5.58 #&#
\begin{eqnarray}
\nonumber
A_{m}^{(n)} &=& \bigl\{\mbox{if
}X_{k,i}^{(n)}(t) > \tilde m\mbox{ for some }i=1,\ldots,k-1
\mbox{ and }t \in[0,T],
\nonumber\\[-8pt]\\[-8pt]
&&\hspace*{83pt}\mbox{then }X_{k,j}^{(n)}(t) > m
\mbox{ for }j = i+1,\ldots,k\bigr\}\nonumber
\end{eqnarray}
has probability bounded below by $1-\epsilon$ for all $n$ large enough.
%P\left(A_{m}^{(n)}\right) \rightarrow1 \mbox{ as } n \rightarrow
%m\geq1, T > 0.
\end{Lem}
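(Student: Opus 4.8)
\medskip

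\noindent{\bf Proof idea.}
The plan is to argue by induction on $k$, following the scheme of the proof of Lemma~\ref{hypercube} but now producing the threshold $\tilde m^{(k)}$ and keeping every exceptional event of probability at most $\eps$ uniformly in large $n$. Exactly as in Lemma~\ref{hypercube}, it suffices to control the last coordinate: one takes $\tilde m^{(k)}=\tilde m^{(k-1)}\vee\hat m^{(k)}$, where $\tilde m^{(k-1)}$ is the threshold furnished by the already-established level-$(k-1)$ version of the lemma (whose hypotheses are implied by ours), applied with a {\em deterministic} time horizon, and $\hat m^{(k)}$ is to be chosen so that, with probability at least $1-\eps$ for all large $n$, for every level-$k$ mark $s$ with $\Gamma_k^{(n)}(s-)<T$ --- on whose constancy interval $X_k^{(n)}$ equals $(X_{k-1}^{(n)}(s),\xi_k^{(n)}(s))$ by Remark~\ref{prop} --- one has: $\max_{1\le i\le k-1}X_{k-1,i}^{(n)}(s)>\hat m^{(k)}$ implies $\xi_k^{(n)}(s)>m$. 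Combined with the level-$(k-1)$ lemma (which handles coordinates $i+1,\dots,k-1$ on such an interval, since there $X_{k,j}^{(n)}=X_{k-1,j}^{(n)}(s)$) this gives the full statement. For the base case $k=2$ the level-$(k-1)$ part is vacuous and only $\hat m^{(2)}$ is needed.

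First I would fix, using Lemma~\ref{clock} and the fact that $\Gamma_k(r)\to\infty$ as $r\to\infty$, a deterministic $R$ with $P(\varphi_k^{(n)}(T)\le R)\ge1-\eps/6$ for all large $n$, so that w.h.p.\ the relevant level-$k$ marks all lie in $[0,R]$; and, using Remarks~\ref{gfin} and~\ref{rmk:theta}, a deterministic $T'$ with $P(\Theta_{k-1}^{(n)}(T')>R)\ge1-\eps/6$ for all large $n$. Apply the level-$(k-1)$ lemma with parameters $(m,\eps/6,R)$; it remains to rule out, with probability $\ge1-\eps/3$ for large $n$, a level-$k$ mark $s\in[0,R]$ with $\xi_k^{(n)}(s)\le m$ and $\max_{1\le i\le k-1}X_{k-1,i}^{(n)}(s)>\hat m^{(k)}$. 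Marks in $\mathcal{R}_k^{(n)}\cap[0,R]$ --- indeed all of $\mathcal{E}_k^{(n)}\cap[0,R]$ --- are disposed of at once by part (a) of Lemma~\ref{extramark}, which gives $\min\{\xi_k^{(n)}(s):\,s\in\mathcal{E}_k^{(n)}\cap[0,R]\}\to\infty$ in probability. For Poisson marks one uses that the level-$k$ Poisson marks carrying a label $\le m$ form a Poisson process of rate $m$ that is independent of $X_{k-1}^{(n)}$ (built from levels $1,\dots,k-1$ only); hence the probability that one of them lands in the random set $B_{\hat m}^{(n)}:=\{s\le R:\,\max_i X_{k-1,i}^{(n)}(s)>\hat m\}$ is at most $m\,E\big[|B_{\hat m}^{(n)}|\big]$.

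The crux is then to make $E\big[|B_{\hat m}^{(n)}|\big]$ arbitrarily small by taking $\hat m$ large, uniformly in large $n$. On $\{\Theta_{k-1}^{(n)}(T')>R\}$ one has $|B_{\hat m}^{(n)}|\le\sum_{j=1}^{k-1}{\cal T}_{kj}^{(n)}(\hat m)$, where ${\cal T}_{kj}^{(n)}(l)$ is the time up to $\Theta_{k-1}^{(n)}(T')$ spent by $X_{k-1,j}^{(n)}$ above $l$, a quantity measurable with respect to levels $1,\dots,k-1$. Splitting ${\cal T}_{kj}^{(n)}(l)$ into the contribution of constancy intervals of $X_{k-1}^{(n)}$ issuing from extra marks (or their descendants) of levels $\le k-1$ and the complementary ``main'' contribution, and then running the conditioning-and-integration computation that produced~(\ref{etheta}), (\ref{esp1}) and~(\ref{esp2}), one bounds $E\big[{\cal T}_{kj}^{(n)}(l)\big]$ by $T'\sum_{x|_{k-1}\in{\mathbb N}_*^{k-1}:\,x_j>l}\bar\gamma_{k-1}^{(n)}(x|_{k-1})$ plus a remainder dominated by $E\big[\mu_{k-1}^{(n)}(\mathcal{E}_{k-1}^{(n)}\cap[0,\Theta_{k-2}^{(n)}(T')])\big]$. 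By~(\ref{esp2}) at level $k-1$ together with~(\ref{em3}) the remainder tends to $0$ as $n\to\infty$; and by the second part of~(\ref{em2}) the tail sum converges, as $n\to\infty$, to $T'\sum_{x_j>l}\bar\gamma_{k-1}(x|_{k-1})$, which vanishes as $l\to\infty$ by~(\ref{eq:fin}). Choosing $\hat m^{(k)}=l$ large and then $n$ large makes $m\sum_{j=1}^{k-1}E\big[{\cal T}_{kj}^{(n)}(\hat m^{(k)})\big]<\eps/6$, which with $P(\Theta_{k-1}^{(n)}(T')\le R)\le\eps/6$ gives the required $\eps/3$ bound for the Poisson-mark event. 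Collecting the handful of exceptional events (the two horizon events, the level-$(k-1)$ event, the extra-mark event of Lemma~\ref{extramark}, and the Poisson-mark event) closes the induction. The main obstacle is exactly this last paragraph --- obtaining a bound on $E\big[{\cal T}_{kj}^{(n)}(l)\big]$ that is simultaneously small for large $l$ and uniform in large $n$ --- which is why the extra-mark contribution must be peeled off and controlled via~(\ref{em3}) while the main term is controlled via~(\ref{em2}).
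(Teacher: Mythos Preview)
Your proposal is correct and follows essentially the same route as the paper's proof: both argue by induction on $k$ with the choice $\tilde m^{(k)}=\tilde m^{(k-1)}\vee\hat m^{(k)}$, reduce to controlling the last coordinate, bound the expected time $X_{k-1}^{(n)}$ spends with some coordinate above $\hat m$ up to $\Theta_{k-1}^{(n)}(T')$ by the tail sum~(\ref{tt1n}) plus an extra-mark remainder, invoke~(\ref{em2}) for the former and Lemma~\ref{extramark} (equivalently~(\ref{esp2}) with~(\ref{em3})) for the latter, and then use independence of the rate-$m$ Poisson process $\mathcal{\tilde{S}}_k^{(m)}$ from levels $1,\dots,k-1$ together with Lemma~\ref{extramark}(a) for the non-Poisson level-$k$ marks. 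Your only cosmetic deviation is the introduction of an intermediate deterministic horizon $R$ (between $T$ and $\Theta_{k-1}^{(n)}(T')$), which the paper bypasses by directly choosing $T'$ so that $\Theta_k^{(n)}(T')>T$ with high probability and then producing the deterministic $T''$ for the level-$(k-1)$ call; the two devices are interchangeable.
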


\begin{pf}
We argue similarly as in the proof of Lemma~\ref{hypercube}, except
that statements here hold \textit{with high probability}, rather than
almost surely.

%The qualificative {\em with high probability} (whp for short) to be
%used many times below refer to a statement being true
%with probability arbitrarily close to 1, provided $n$ is large enough.
By Remark~\ref{rmktheta} and the fact that
$\lim_{r\to\infty}\Theta_{j}(r)=\infty$ for every $1\leq j\leq k$, we
may choose $T' > 0$ such that $\Theta^{(n)}_{k}(T') > T$ with
probability at least $1-\epsilon/4$ uniformly in $n$. Now for $m\in
\mathbb{N}_{*}$ and $j=1,\ldots,k$, let $\tilde{\mathcal{S}}_{j}^{(m)}$
be\vspace*{-1pt} as in~(\ref{tsj}) above. For fixed
$\ell\in\mathbb{N}_{*}$, let $\mathcal{T}^{(n)}_{kj}(\ell)$ denote the
set of times up to $\Theta^{(n)}_{k-1}(T')$ spent by $X^{(n)}_{k-1,j}$
above $\ell$, $j=1,\ldots,k-1$. Analogously as for the infinite volume
case [see~(\ref{tt1})], we may check that the expected Lebesgue measure
of $\mathcal{T}^{(n)}_{kj}(\ell)$ equals
%e5.59 #&#
%e5.51 ###
\begin{equation}
\label{tt1n} T'\sum_{x_{1}=1}^{M_1}
\cdots\sum_{x_j =\ell+1}^{M_j}\cdots\sum
_{x_{k-1}=1}^{M_{k-1}} \gamma^{(n)}_{1}(x|_1)
\cdots\gamma^{(n)}_{k-1}(x|_{k-1})
\end{equation}
plus the contribution of the extra marks and their descendants. It
follows from~(\ref{em2}) that the $\limsup_{n\to\infty}$ of the
expression in~(\ref{tt1n}) vanishes as $\ell\to\infty$. By
Lemma~\ref{extramark}(b), the contribution of the extra marks and their
descendants vanishes in probability as $n\to\infty$. It then follows
from elementary properties of Poisson processes, that
%e5.60 #&#
%e5.52 ###
\begin{equation}
\label{tt2n} \Biggl\{\bigcup_{j=1}^{k-1}
\mathcal{T}^{(n)}_{kj}(\ell)\Biggr\}\cap\tilde{\mathcal{
S}}_{k}^{(m)}=\varnothing
\end{equation}
outside an event whose probability is bounded above by $\epsilon/4$ for
all $\ell, n$ large enough. This statement is about Poissonian marks;
but it also holds for extra marks by Lemma~\ref{extramark}(a).

So, given $m\in\mathbb{N}_{*}$ and $\epsilon>0$, we find $\hat m^{(k)}$
such that outside an event of probability smaller than $\epsilon/2$ for
all $n$ large enough,\vspace*{-1pt} if $X^{(n)}_{k,i}(t) >\hat m^{(k)}$
for any $t\leq T$, then $X^{(n)}_{k,k}(t) > m$. This in particular
establishes the claim for $k=2$ by the choice $\tilde m^{(2)}=\hat
m^{(2)}$. Let us assume that the claim is established for $k-1$.
%(with $\eps/4$ rather than $\eps$, and $T''$ rather
%than $T$ such that $P(\Theta^{(n)}_{k-1}(T')\leq T'')>1-\eps/4$
%uniformly in $n$, and ).
%Then by making the choice
%$m_i^{(k)}=m_i^{(k-1)}\vee\tilde m_1^{(k)}$, we find that it satisfies
%the claim.
Then substituting in that claim $\epsilon$ for $\epsilon/4$, and
$T$ for $T''$
such that $P(\Theta^{(n)}_{k-1}(T')\leq T'')>1-\epsilon/4$ for all
large enough $n$ as $T$,
%and $\hat m^{(k)}\vee m$ as $m$,
and choosing $\tilde m^{(k)}=\tilde m^{(k-1)}\vee\hat m^{(k)}$, %$i=1,
%$m_{k-1}^{(k)}=\tilde m_1^{(k)}$,
we find that it satisfies the claim for $k$.
\end{pf}

%By a standard argument starting from Lemma~\ref{hypercubeprob}, we may
%show that, under the assumptions of that lemma,
%given $m\geq1$, there exists a subsequence $(n')$ such that if
%A_{m} &=&
%that if } X_{k,i}^{(n')}(t) > m_i^{(k)}
%&&\mbox{ } i=1,\ldots,k-1 \mbox{ and } t \in[0,T],\mbox{ then
%}X_{k,j}^{(n')}(t) > m\mbox{ for }j = i+1,\ldots,k\},
%then
%P\left(A_{m}\right) = 1.% \mbox{ for all } m\geq1.

%By a standard argument starting from Lemma~\ref{hypercubeprob}, we may
%show that, under the assumptions of that lemma,
%given $m\geq1$, there exists a subsequence $(n')$ such that if
%A_{m} &=&
%enough $n'$ if } X_{k,i}^{(n')}(t) > \tilde m
%&&\mbox{ }i=1,\ldots,k-1\mbox{ and } t \in[0,T],\mbox{ then
%}X_{k,j}^{(n')}(t) > m\mbox{ for }j = i+1,\ldots,k\},
%then
%P\left(A_{m}\right) = 1.% \mbox{ for all } m\geq1.

%%%%%%%%%%%%%%%%%%%%%%%%%%%%%%%%%%%%%%%%%%%%%%%%%%%%%%%%%%%%%%%%%%%%%%%%%%%%%%%%%%%%
%%%%%%%%%%%%%%%%%%%%%%%%%%%%% PROOF
%%%%%%%%%%%%%%%%%%%%%%%%%%%%%%%%%%%%%%%%%%%%%%%
%%%%%%%%%%%%%%%%%%%%%%%%%%%%%%%%%%%%%%%%%%%%%%%%%%%%%%%%%%%%%%%%%%%%%%%%%%%%%%%%%%%%

%s5.4 #&#
%s5.4 ###
\subsection{\texorpdfstring{End of proof of Theorem \protect\ref{GK}}
{End of proof of Theorem 5.4}}\label{pf}

For $j = 1,\ldots,k$, $n\geq1$, let
$X_{j}^{(n)} \sim TM({\mathbb T}_{k}^{(n)},\underline{\gamma_{j}^{(n)}})$
and $X_{j} \sim K_{j}({\mathbb T}_k,\underline{\gamma_{j}})$, and, for
$k \geq2$ fixed, suppose that $X_{k-1}^{(n)} \rightarrow X_{k-1}$ in
probability as
$n \rightarrow\infty$.
We may then and will inductively suppose that
%for $j=1,\ldots,k-1$
%e5.61 #&#
%e5.53 ###
\begin{equation}
\label{convxk-1} X_{k-1}^{(n')} \rightarrow X_{k-1}\qquad
\mbox{a.s. as } n' \rightarrow\infty,
\end{equation}
%
%and Claim~\ref{claim} holds for $k-1$ and the
for a subsequence $(n')$.
%, with $T$ replaced by $T''$ satisfying $\Theta_{k-1}(T')<T''$ and
%$\Theta^{(n')}_{k-1}(T')<T''$ for all $n'$.
We will fix $\epsilon>0$, $T>0$ and $m\geq1$ and choose $T'$ and
$\tilde m$
such that outside an event $\mathcal{E}=\mathcal{E}_{n'}$ of
probability at most $\epsilon/2$ for all $n'$ large enough, we have that
the conclusions of
%also choose $m_i^{(k)}$, $i=1,\ldots,k-1$, satisfying the conditions of
Lemma~\ref{hypercube} and~\ref{hypercubeprob} hold, and also that
$\Theta_{k-1}(T')\wedge\Theta^{(n')}_{k-1}(T')>T$. We will also assume
that $\tilde m\geq m$, and that the claims of Lemma~\ref{extramark}
hold almost surely over $(n')$.

%We will from now on define to define $\lambda_k^{(n')}$ from the
%ingredients above. We start by

On the way to showing the validity of~(\ref{convxk-1}) with $k$
replacing $k-1$ (in probability), we now proceed to define appropriate
time distortions $\lambda^{(n')}$; see the discussion on the Skorohod
metric at the beginning of Section~\ref{infvol}.
%Let $I_{11}=[a_{11},b_{11}),\ldots, I_{1p}=[a_{1p},b_{1p})$ be
Let us start by considering the constancy intervals of $X_{k-1,1}$ in
$[0,\Theta_{k-1}(T'))$ with $X_{k-1,1}\leq\tilde m$. These are defined
to be the \textit{rank-$\tilde m$ constancy intervals of the level $1$
for $X_{k-1}$}. Proceeding inductively, given $2\leq\ell\leq k-1$, for
each rank-$\tilde m$ constancy interval $I$ of level $\ell-1$, we
consider the constancy intervals of $X_{k-1,\ell}$ inside $I$ such that
$X_{k-1,\ell}\leq\tilde m$. The collection of all such intervals
obtained from all the rank-$\tilde m$ constancy intervals of level
$\ell-1$ for $X_{k-1}$ form the set of rank-$\tilde m$ constancy
intervals of level $\ell$ for $X_{k-1}$.

Let $a_1,\ldots,a_{2L}$ denote the collection of all endpoints of all
the rank-$\tilde m$ constancy intervals of level $i$ for $X_{k-1}$,
$i=1,\ldots,k-1$, in increasing order, and let $b_1,\ldots,b_{2J}$
denote the collection of all endpoints of all the rank-$\tilde m$
constancy intervals of level $k-1$ for $X_{k-1}$ in increasing order.
See Figure~\ref{figproof}.

%f5 #&#
%
%f5 ###
\begin{figure}

\includegraphics{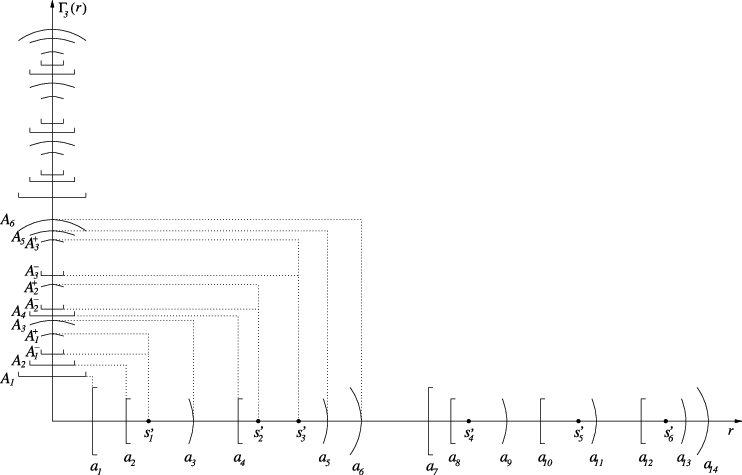}

\caption{Depiction of objects appearing in the argument for the proof
of Theorem~\protect\ref{GK} with $k=3$. Rank-$\tilde m$ constancy
intervals of the
first level ($[a_1,a_6)$ and $[a_7,a_{14})$) and rank-$\tilde m$
constancy intervals of the second level
($[a_2,a_3)$, $[a_4,a_5)$, $[a_8,a_9)$, $[a_{10},a_{11})$ and
$[a_{12},a_{13})$) for $X_{2}$ appear on
the $x$-axis. Correspondingly on the $y$-axis, we have rank-$\tilde m$
constancy intervals of the first level ($[A_1,A_6)$ and another
whose endpoints are not named in the picture),
rank-$\tilde m$ constancy intervals of the second level
($[A_2,A_3)$, $[A_4,A_5)$, and others whose endpoints are not named in
the picture),
and rank-$\tilde m$ constancy intervals of the third level
($[A_1^-,A_1^+)$, $[A_2^-,A_2^+)$, $[A_3^-,A_3^+)$, and others whose
endpoints are not named in the picture)
for $X_{3}$. Some of the correspondences between the axes are indicated
by dotted lines.
We have also $b_1=a_2$, $b_2=a_3$, $b_3=a_4$, $b_4=a_5$, $b_5=a_8$,
$b_6=a_9$, $b_7=a_{10}$, $b_8=a_{11}$, $b_9=a_{12}$ and
$b_{10}=a_{13}$. This picture is also good for $X^{(n)}_{3}$, with $n$
large, and with all endpoint labels having superscripts
``$(n)$.''}\label{figproof}
\end{figure}

Let us also consider rank-$\tilde m$ constancy intervals of level $i$
for $X^{(n')}_{k-1}$, with the paralell definition to the one above. By
the assumption that Lemma~\ref{extramark} holds almost surely over
$(n')$, and for $n'$ large enough, there is one-to-one correspondence
of the $a^{(n')}_1,\ldots,a^{(n')}_{2L^{(n')}}$ and
$a_1,\ldots,a_{2L}$, with $L^{(n')}=L$ for all large $n'$ and
$a^{(n')}_i$ corresponding to $a_i$, and from~(\ref{convxk-1}),
%e5.62 #&#
%e5.54 ###
\begin{equation}
\label{antoa} a^{(n')}_i\to a_i
\end{equation}
almost surely as $n'\to\infty$ for every $i=1,\ldots,2L$.

Let now $A_i=\Gamma_k(a_i)$ and $A^{(n')}_i=\Gamma
^{(n')}_k(a^{(n')}_i)$, $i=1,\ldots,2L$. See Figure~\ref{figproof}. It
follows from Lemma~\ref{clock} (see Corollary~\ref{cor} and
Remark~\ref{rmktheta}) that
%e5.63 #&#
%e5.55 ###
\begin{equation}
\label{AntoA} A^{(n')}_i\to A_i
\end{equation}
in probability as $n'\to\infty$, and we may assume a.s. convergence by
taking a subsequence.

Let $\{s'_1,\ldots,s'_Q\}$ be the enumeration in increasing order of
$\bigcup_{i=1}^J\{\tilde{\mathcal{S}}_{k}^{(m)}\cap[b_{2i-1},b_{2i})\}$,
and let $A_i^-=\Gamma_k(s'_i-)$ and $A^+_i=\Gamma_k(s'_i)$. See
Figure~\ref{figproof}. We note that the intervals $[A_i^-,A^+_i)$,
$i=1,\ldots,Q$ are the rank-$m$ constancy intervals of level $k$ for
$X_{k}$, whereas $A_i$, $i=1,\ldots,2L$, are the endpoints of all the
rank-$\tilde m$ constancy intervals of level $i$ for $X_{k}$,
$i=1,\ldots,k-1$.

We remark at this point that under our assumptions so far, we have that
for all $i=1,\ldots,J$
%e5.64 #&#
%e5.56 ###
\begin{equation}
\label{ids} \tilde{\mathcal{S}}_{k}^{(m)}
\cap\bigl[b_{2i-1},b_{2i}\bigr)=\tilde{\mathcal{S}}_{k}^{(m)}
\cap\bigl[b^{(n')}_{2i-1},b^{(n')}_{2i}\bigr)
\end{equation}
almost surely for all $n'$ large enough, where
$b^{(n')}_{i}=a^{(n')}_{j}$ such that $a_{j}=b_{i}$.

Let $A_i^-(n')=\Gamma^{(n')}_k(s'_i-)$, $A^+_i(n')=\Gamma
^{(n')}_k(s'_i)$. Then we have that for all large enough $n'$,
$[A_i^-(n'),A^+_i(n'))$, $i=1,\ldots,Q$ are the rank-$m$ constancy
intervals of level $k$ for $X^{(n')}_{k}$, whereas $A^{(n')}_i$,
$i=1,\ldots,2L$, are\vspace*{-1pt} the endpoints of all the
rank-$\tilde m$ constancy intervals of level $i$ for $X^{(n')}_{k}$,
$i=1,\ldots,k-1$.

Let us now argue that
%e5.65 #&#
%e5.57 ###
\begin{equation}
\label{ApmntoApm} A_i^\pm{\bigl(n'\bigr)}
\to A^\pm_i
\end{equation}
in probability as $n'\to\infty$ (and again we may assume a.s.
convergence by taking a subsequence). It is enough to first note that
almost surely $A_i^-(n')= \tilde\Gamma_{k}^{(n')}(s'_i)$,
$A_i^-=\tilde\Gamma_{k}(s'_i)$,
$A_i^+(n')=\tilde\Gamma_{k}^{(n')}(s'_i)+\gamma
^{(n)}_k(X^{(n')}_{k-1}(s'_i),\xi_k(s'_i))$ and
$A_i^+=\tilde\Gamma_{k}(s'_i)+\gamma_k(X_{k-1}(s'_i),\xi_k(s'_i))$,
where $\tilde\Gamma_{k}^{(n')}$ and $\tilde\Gamma_{k}$ are obtained
from $\tilde\mu_{k}^{(n')}$ and $\tilde\mu_{k}$ as $\Gamma_{k}^{(n')}$
and $\Gamma_{k}^{(n')}$ are obtained from $\mu_{k}^{(n')}$ and
$\mu_{k}$, respectively, where $\tilde\mu_{k}^{(n')}=\mu_{k}^{(n')}$
and $\tilde\mu_{k}=\mu_{k}$ everywhere except at $\{s'_i\}$, where
$\tilde\mu_{k}^{(n')}$ and $\tilde\mu_{k}$ both vanish. By the same
arguments above we get
$\tilde\Gamma_{k}^{(n')}(s'_i)\to\tilde\Gamma_{k}(s'_i)$ in
probability, and the result follows upon noticing that
$X^{(n')}_{k-1}(s'_i)=X_{k-1}(s'_i)$ for all large enough $n'$ and
using~(\ref{em2}).

We are now ready to define our time distortion. Let $\lambda
^{(n')}\dvtx [0,\infty)\to[0,\infty)$ be such that
%e5.66 #&#
%e5.58 ###
\begin{equation}
\label{id} \qquad\lambda^{(n')}(A_i)=A_i^{(n')},
\qquad\lambda^{(n')}\bigl(A^-_i\bigr)=A_i^-
\bigl(n'\bigr),\qquad\lambda^{(n')}\bigl(A^+_i
\bigr)=A_i^+\bigl(n'\bigr)
\end{equation}
and make\vspace*{-2pt} it linear between successive points of
$\mathcal{A}:=\{A_i, i=1,\ldots,2L;\break A^-_j,  A^+_j,
j=1,\ldots,Q\}$, and linear with inclination 1 from $\max\mathcal{A}$
on. Then $\lambda^{(n')}$ is almost surely well defined for all large
enough $n'$, and one readily checks that condition~(\ref{id}) implies
that $\lambda^{(n')}$ maps rank-$\tilde m$ constancy intervals of level
$i$ for $X_{k}$ to the corresponding\vspace*{-1pt} rank-$\tilde m$
constancy intervals of level $i$ for $X^{(n')}_{k}$, $i=1,\ldots,k$,
given by the coupling. In particular,
$X_{k,j}(\lambda^{(n')}(\cdot))=X^{(n')}_{k,j}(\cdot)$, $j=1,\ldots i$,
on those respective intervals. From the assumptions of the paragraph
of~(\ref{convxk-1}), we then have that outside $\mathcal E$
%e5.67 #&#
%e5.59 ###
\begin{equation}
\label{dist} \sup_{0\leq u\leq T}\rho\bigl(X^{(n')}_k,X_k,
\lambda^{(n')},u\bigr) \leq1/m
\end{equation}
and by~(\ref{AntoA}) and~(\ref{ApmntoApm}) and our construction and
assumptions it follows that
%e5.68 #&#
%e5.60 ###
\begin{equation}
\label{phito0} \phi\bigl(\lambda^{(n')}\bigr)\to0
\end{equation}
as $n'\to\infty$ almost surely, where $\phi$ is the time distortion
function introduced in~(\ref{eqg}).

It follows from all of the above that for every fixed $\epsilon,T>0$
and $m\in\mathbb{N}_{*}$ we may find a subsequence $(n')$ such that
%e5.69 #&#
%e5.61 ###
\begin{equation}
\label{limsup} P \biggl(\rho\bigl(X^{(n')}_k,X_k
\bigr)>\frac{1}m +e^{-T} \biggr)\leq\epsilon
\end{equation}
for all $n'$ large enough, so we have that $X^{(n')}_k\to X_k$ in
probability, and this
%almost surely, and this readily implies that there is a further
%subsequence $(n'')$ such that
%almost surely. This
readily implies the claim of Theorem~\ref{GK}.

%%%%%%%%%%%%%%%%%%%%%%%%%%%%%%%%%%%%%%%%%%%%%%%%%%%%%%%%%%%%%%%%%%%%%%%%%%%%%%%%%%%%
%%%%%%%%%%%%%%%%%%%%%%%%%%%%% STR
%%%%%%%%%%%%%%%%%%%%%%%%%%%%%%%%%%%%%%%%%%%%%%%
%%%%%%%%%%%%%%%%%%%%%%%%%%%%%%%%%%%%%%%%%%%%%%%%%%%%%%%%%%%%%%%%%%%%%%%%%%%%%%%%%%%%

%s5.5 #&#
%s5.5 ###
\subsection{\texorpdfstring{Proof of Theorem \protect\ref{scal}}
{Proof of Theorem 5.2}}\label{GREM}

The strategy will be to work with a coupled version of $(\underline
{\gamma_{k}^{(n)}},\underline{\gamma_{k}})$,
which we will call $(\underline{\hat\gamma_{k}^{(n)}},\underline
{\hat\gamma_{k}})$, such that
almost surely for every $j = 1,\ldots,k$ and $x|_{j}\in\mathbb{N}_{*}^{j}$
%e5.70 #&#
%e5.62 ###
\begin{equation}
\label{eqgntohg} \hat\gamma_{j}^{(n)}(x|_{j})\to
\hat\gamma_{j}(x|_{j})\qquad\mbox{as } n\to\infty
\end{equation}
and then verify the remaining conditions of Theorem~\ref{GK}. (Recall
that in the context of Theorem~\ref{scal}, the sets of parameters
$\underline{\gamma_{k}^{(n)}}$ and $\underline{\gamma_{k}}$ are
random.)

For the coupling, we use the construction of~\cite{kng}, Section~6,
which we describe briefly, guiding the reader to that reference for
more details.

Let $E_j(x|_j)$, $x|_j\in\mathbb{N}_{*}^{j}$, $j=1,\ldots,k$ be
independent mean one exponential random variables,
and, for $x|_{j-1}\in\mathbb{N}_{*}^{j-1}$ make
%e5.71 #&#
%e5.63 ###
\begin{equation}
\label{eqS} S_j(x|_{j})=\sum
_{i=1}^{x_j}E_j(x|_{j-1},i),
\end{equation}
where $x|_0$ is a void symbol. Let now
%e5.72 #&#
%e5.73 #&#
%e5.65 ###
%e5.64 ###
\begin{eqnarray}
\label{eqhgn} \hat\gamma_{j}^{(n)}(x|_{j})&=&c_{j}^{(n)}
G_j^{-1} \biggl(\frac
{S_j(x|_{j})}{S_j(x|_{j-1},M_j+1)} \biggr),
\\
\label{eqhg} \hat\gamma_{j}(x|_{j})&=&S_j(x|_{j})^{-1/\alpha_j}.
\end{eqnarray}
From an elementary large deviation estimate, we may assume that
%e5.74 #&#
%e5.66 ###
\begin{equation}
\label{eqld} S_j(x|_{j-1},M_j+1)
\leq2M_j
\end{equation}
for all $x|_{j-1}\in\mathcal{{M}}|_{j-1}$ and $n$ sufficiently large
almost surely,
recalling that the $M_j$'s depend on $n$.

Then $\hat\gamma_{j}^{(n)}(x|_{j-1})$ and $\hat\gamma_{j}(x|_{j-1})$
are\vspace*{1pt} versions of $\gamma_{j}^{(n)}(x|_{j-1})$ and
$\gamma_{j}(x|_{j-1})$, respectively~\cite{knlwz}. Proposition 6.3 and
Lemma 6.4 of~\cite{kng} immediately imply~(\ref{eqgntohg}), and also
that almost surely for every $j = 1,\ldots,k$ and
$x|_{j-1}\in\mathbb{N}_{*}^{j-1}$
%e5.75 #&#
%e5.67 ###
\begin{equation}
\label{eqhgntohg} \sum_{x_{j}\in\mathcal{{M}}_{j}} \hat\gamma
_{j}^{(n)}(x|_{j})\to\sum
_{x_{j}\in\mathbb{N}_{*}} \hat\gamma_{j}(x|_{j})\qquad\mbox{as } n\to\infty.
\end{equation}
The a.s. validity of the first part of~(\ref{em2}) for all $j$, $x$, as
well as that of the second part for $k=1$, follow immediately.

In order to get the a.s. validity of the second part of~(\ref{em2}) for
general $k$, we argue as follows. We may suppose by induction that it
holds for $k-1$. We first write the sum in the second part
of~(\ref{em2}) more explicitly as follows:
%e5.76 #&#
%e5.68 ###
\begin{equation}
\label{eqem21} \sum_{x_1}\hat\gamma_{1}^{(n)}(x_1)
\cdots\sum_{x_j}\hat\gamma_{j}^{(n)}(x|_{j})
\cdots\sum_{x_k}\hat\gamma_{k}^{(n)}(x|_{k}),
\end{equation}
%
%Notice that in~(\ref{eqem21}) we have $k$ iterated sums of the form $
and break each of the $k$ sums (following the strategy of~\cite{kng};
see proof of Proposition 6.3 thereof) in three parts, so that the $j$th
sum is written as
%e5.77 #&#
%e5.69 ###
\begin{equation}
\label{eqjs} %\sum_{x_j}=
\sum_{x_j}^{(1)}+
\sum_{x_j}^{(2)}+\sum
_{x_j}^{(3)},
\end{equation}
where given $\delta_j\in(0,1)$, the first sum is over $x_j$ such
that $\hat\gamma_{j}(x|_{j})>\delta_j$, the second sum is over
$x|_j$ such that
$M_j^{-1/\alpha_j}<\hat\gamma_{k}(x|_{j})\leq\delta_j$ and the
third sum is over $x|_j$ such that
$\hat\gamma_{k}(x|_{j})\leq M_j^{-1/\alpha_j}$.

It follows from~(\ref{eqgntohg}) %Lemma 6.4 of~\cite{kng}
that
%e5.78 #&#
%e5.70 ###
\begin{equation}
\label{eqc1} \sum_{x_1}^{(1)}\hat
\gamma_{1}^{(n)}(x_1)\cdots\sum
_{x_k}^{(1)}\hat\gamma_{k}^{(n)}(x|_{k})
\to \sum_{x_1}^{(1)}\hat
\gamma_{1}(x_1)\cdots\sum_{x_k}^{(1)}
\hat\gamma_{k}(x|_{k})
\end{equation}
as $n\to\infty$ almost surely,
since these are sums over a fixed bounded set of terms. We will show that
%e5.79 #&#
%e5.71 ###
\begin{equation}
\label{eqc2} \limsup_{\delta_1,\ldots,\delta_k\to0}\limsup_{n\to\infty}
\sum_{x_1}^{(i_1)}\hat{\gamma}_{1}^{(n)}(x_1)\cdots
\sum_{x_k}^{(i_k)}\hat{\gamma}_{i_k}^{(n)}(x|_{k})=0
\end{equation}
almost surely, for all $(i_1,\ldots,i_k)\in\{1,2,3\}^k\setminus\{
(1,\ldots,1)\}$. Since, again, $\sum_{x_j}^{(1)}$
are sums over a fixed bounded set of terms, and using the induction
hypothesis, it is enough to consider sums
%e5.80 #&#
%e5.72 ###
\begin{equation}
\label{eqc3} \sum_{x_1}^{(i_1)}\hat
\gamma_{1}^{(n)}(x_1)\cdots\sum
_{x_k}^{(i_k)}\hat\gamma_{k}^{(n)}(x|_{k})
\end{equation}
with $i_1\in\{2,3\}$.

Let us first consider the case where $i_1=2$ and $i_j\in\{1,2\}$ for
all $j=2,\ldots,k$. It follows from the arguments in the proof of Lemma
6.5 of~\cite{kng} that given $\eta_j>0$ there exists $C_j<\infty$ such
that $\hat\gamma_{j}^{(n)}(x|_j)\leq
C_j[(\hat\gamma_{j}(x|_j))^{1-\eta_j}\vee(\hat\gamma
_{j}(x|_j))^{1+\eta_j}]$, and we replace $\vee$ by $+$, thus obtaining
an upper bound. We then have an upper bound for~(\ref{eqc3}) in terms
of $2^{k-1}$ sums of the form constant times
%e5.81 #&#
%e5.73 ###
\begin{equation}
\label{eqc4} \sum_{x_1\dvtx  \hat\gamma_{1}(x_1)\leq\delta_1}\bigl(\hat
\gamma
_{1}(x_1)\bigr)^{1-\eta_1}\sum
_{x_2}\bigl(\hat\gamma_{2}(x|_2)
\bigr)^{1\pm\eta_2} \cdots \sum_{x_k}\bigl(\hat
\gamma_{k}(x|_{k})\bigr)^{1\pm\eta_k}.
\end{equation}
Now, by choosing $\eta_j$ small enough such that
%e5.82 #&#
%e5.74 ###
\begin{equation}
\label{eqord} \frac{\alpha_1}{1\pm\eta_1}<\frac{\alpha_2}{1\pm\eta
_2}<\cdots<
\frac{\alpha_k}{1\pm\eta_k}<1,
\end{equation}
one readily checks, for example, by using Campbell's theorem, that for every~$x_1$
%e5.83 #&#
%e5.75 ###
\begin{equation}
\label{eqc5} \sum_{x_2}\bigl(\hat
\gamma_{2}(x|_2)\bigr)^{1\pm\eta_2}\cdots\sum
_{x_k}\bigl(\hat\gamma_{k}(x|_{k})
\bigr)^{1\pm\eta_k}
\end{equation}
is an $\frac{\alpha_2}{1\pm\eta_2}$-stable random variable, and
finally that the random variable in~(\ref{eqc4}), which is decreasing
in $\delta_1$, converges in probability to 0 as $\delta_1\to0$. We
conclude it converges almost surely to 0 as $\delta_1\to0$,
and~(\ref{eqc2}) follows for the case where $i_1=2$ and $i_j\in\{1,2\}$
for all $j=2,\ldots,k$.

Let us now analyze the expression in~(\ref{eqc3}) when
$\mathcal{L}:=\{j=1,\ldots,k\dvtx\break   i_j=3\}\ne\varnothing$. It is
argued in~\cite{kng} [see discussion leading to (6.20) in that
reference] that for $j\in\mathcal{L}$, $\hat\gamma_{j}^{(n)}(x|_j)$ is
almost\vspace*{-1pt} surely bounded above by a deterministic constant
times $c_{j}^{(n)}$ for all large enough $n$ uniformly in
$\mathcal{L}$. Let $k'=|\mathcal{L}|$ and let $i'_1<\cdots<i'_{k'}$ be
an enumeration of $\mathcal{L}$, and let $i''_1<\cdots<i''_{k''}$ be an
enumeration of $\{1,\ldots,k\}\setminus\mathcal{L}$, $k''=k-k'$. Then,
arguing as above,~(\ref{eqc3}) may be bounded above by a sum of
$2^{k''}$ terms of the form
%e5.84 #&#
\begin{eqnarray}\label{eqc6}
&&\prod_{j=1}^{k'}c_{i'_j}^{(n)}
\sum_{x_{i'_1}=1}^{M_{i'_1}}\cdots\sum
_{x_{i'_{k'}}=1}^{M_{i'_{k'}}} \Biggl\{ \sum
_{x_{i''_1}=1}^{M_{i''_1}} \bigl(\hat\gamma_{i''_1}^{(n)}(x|_{i''_1})
\bigr)^{1\pm\eta_{i''_1}}\cdots
\nonumber\\[-8pt]\\[-8pt]
&& \hspace*{107pt}\sum_{x_{i''_{k''}}=1}^{M_{i''_{k''}}}
\bigl(\hat\gamma_{i''_{k''}}^{(n)}(x|_{i''_{k''}})
\bigr)^{1\pm\eta_{i''_{k''}}} \Biggr\}.\nonumber
\end{eqnarray}
Again, choosing $\eta$'s small enough, we have that the random
variables within braces are i.i.d. $\frac{\alpha_{i''_1}}{1\pm\eta
_{i''_1}}$-stable ones, and since\vspace*{-6pt} the outer sums are over
$\prod_{j=1}^{k'}M_{i'_j}$ terms, and, as one may readily check,
%$\prod_{j=1}^{k'}c_{i'_j}^{(n)}\le(\prod_{j=1}^{k'}M_{i'_j}\ri)^{\frac{
$\prod_{j=1}^{k'}c_{i'_j}^{(n)}(\prod_{j=1}^{k'}M_{i'_j})^{(1\pm\eta
_{i''_1})/\alpha_{i''_1}}$ decays polynomially\vadjust{\goodbreak}
in~$n$ to 0 as $n\to\infty$, by a standard argument, we have that the
expression in~(\ref{eqc6}) decays almost surely to 0 as $n\to\infty$,
and~(\ref{eqhgntohg}) follows for general $k$ by first taking
$n\to\infty$ and then $\delta_1,\ldots,\delta_k\to0$.

It remains to check~(\ref{em4}) and (\ref{em3}) as strong limits for
the $\hat\gamma$ representations of the respective $\gamma$'s. This is
done in much the same way as for checking~(\ref{em2}) above, so we will
be rather sketchy. First note that the expressions in~(\ref{em4}) and
(\ref{em3}) can, after dividing the $M$'s on the denominator inside the
sum, and expanding the resulting products
%e5.85 #&#
%e5.76 ###
\begin{equation}
\label{eqprod} \prod_{p = l + 1}^{j-1} \biggl(
\frac{1}{M_{p+1}}+\hat\gamma_{p}^{(n)}(x|_{p})
\biggr),
\end{equation}
be both written as a sum over a fixed number of terms of the form
%e5.86 #&#
%e5.77 ###
\begin{equation}
\label{eqem22} \sum_{x_1}\check
\gamma_{1}^{(n)}(x_1)\cdots\sum
_{x_m}\check\gamma_{\ell}^{(n)}(x|_{m}),
\end{equation}
where $1\leq m\leq k$ and $\check\gamma_{j}^{(n)}(x|_j)$ is either
$\hat\gamma_{j}^{(n)}(x|_j)$ or $1/M_{j+1}=c_{j}^{(n)}$ for all
$j=1,\ldots,m$, with the latter case happening for at least one such $j$.

We\vspace*{-3pt} can thus break each sum $\sum_{x_j}$ into three kinds
as above [see~(\ref{eqjs})], with the superscript ``$(3)$'' applying
also to the case where $\check\gamma_{j}^{(n)}(x|_j)=c_{j}^{(n)}$. The
same arguments\vspace*{2pt} used above to estimate the latter cases
of~(\ref{eqc3}) [see the paragraph of~(\ref{eqc6})] apply, since there
is always a sum of the third kind, and the result follows.

%%%%%%%%%%%%%%%%%%%%%%%%%%%%%%%%%%%%%%%%%%%%%%%%%%%%%%%%%%%%%%%%%%%%%%%%%%%%%%%%%%%%
%%%%%%%%%%%%%%%%%%%%%%%%%%%%% ACKS
%%%%%%%%%%%%%%%%%%%%%%%%%%%%%%%%%%%%%%%%%%%%%%%
%%%%%%%%%%%%%%%%%%%%%%%%%%%%%%%%%%%%%%%%%%%%%%%%%%%%%%%%%%%%%%%%%%%%%%%%%%%%%%%%%%%%

\section*{Acknowledgments}
This paper contains results of the PhD thesis of\break  R.~J.~Gava,
supervised by the other authors. The authors thank an anonymous referee
for what can be construed as a careful and thorough reading of an
earlier version of this work, leading to many suggestions and a few
corrections which much improved the presentation. They also thank NUMEC
for hospitality. The work of L.~R.~G. Fontes is part of USP project
MaCLinC and FAPESP project NeuroMat. He would like to thank the CMI,
Universit\'e de Provence, Aix--Marseille I for hospitality and support
during several visits in the last few years where this and related
projects were developed.

%suskaldyti doi

% imsref loaded by linak, 2013-11-26 16:16:48
%
% imsref loaded by linak, 2013-11-28 13:42:20

\printaddresses


\begin{thebibliography}{27}
% BibTex style file: ims.bst, 2013-01-28
% Default style options (sort=0,type=number).
% Used options (sort=1,type=number).

%b1 ###
\bibitem{knbc}
\begin{barticle}[mr]
\bauthor{\bsnm{Barlow},~\bfnm{Martin~T.}\binits{M.~T.}} \AND
  \bauthor{\bsnm{{\v{C}}ern{\'y}},~\bfnm{Ji{\v{r}}{\'{\i}}}\binits{J.}}
(\byear{2011}).
\btitle{Convergence to fractional kinetics for random walks associated with
  unbounded conductances}.
\bjournal{Probab. Theory Related Fields}
\bvolume{149}
\bpages{639--673}.
\bid{doi={10.1007/s00440-009-0257-z}, issn={0178-8051}, mr={2776627}}
\bptok{imsref}%
\end{barticle}
\endbibitem

%b2 ###
\bibitem{knabg1}
\begin{barticle}[mr]
\bauthor{\bsnm{Ben~Arous},~\bfnm{G{\'e}rard}\binits{G.}},
  \bauthor{\bsnm{Bovier},~\bfnm{Anton}\binits{A.}} \AND
  \bauthor{\bsnm{Gayrard},~\bfnm{V{\'e}ronique}\binits{V.}}
(\byear{2003}).
\btitle{Glauber dynamics of the random energy model. {I}. {M}etastable motion
  on the extreme states}.
\bjournal{Comm. Math. Phys.}
\bvolume{235}
\bpages{379--425}.
\bid{doi={10.1007/s00220-003-0798-4}, issn={0010-3616}, mr={1974509}}
\bptok{imsref}%
\end{barticle}
\endbibitem

%b3 ###
\bibitem{knabg2}
\begin{barticle}[mr]
\bauthor{\bsnm{Ben~Arous},~\bfnm{G{\'e}rard}\binits{G.}},
  \bauthor{\bsnm{Bovier},~\bfnm{Anton}\binits{A.}} \AND
  \bauthor{\bsnm{Gayrard},~\bfnm{V{\'e}ronique}\binits{V.}}
(\byear{2003}).
\btitle{Glauber dynamics of the random energy model. {II}. {A}ging below the
  critical temperature}.
\bjournal{Comm. Math. Phys.}
\bvolume{236}
\bpages{1--54}.
\bid{doi={10.1007/s00220-003-0799-3}, issn={0010-3616}, mr={1977880}}
\bptok{imsref}%
\end{barticle}
\endbibitem

%b4 ###
\bibitem{knbbc}
\begin{barticle}[mr]
\bauthor{\bsnm{Ben~Arous},~\bfnm{G{\'e}rard}\binits{G.}},
  \bauthor{\bsnm{Bovier},~\bfnm{Anton}\binits{A.}} \AND
  \bauthor{\bsnm{{\v{C}}ern{\'y}},~\bfnm{Ji{\v{r}}{\'{\i}}}\binits{J.}}
(\byear{2008}).
\btitle{Universality of the {REM} for dynamics of mean-field spin glasses}.
\bjournal{Comm. Math. Phys.}
\bvolume{282}
\bpages{663--695}.
\bid{doi={10.1007/s00220-008-0565-7}, issn={0010-3616}, mr={2426140}}
\bptok{imsref}%
\end{barticle}
\endbibitem

%b5 ###
\bibitem{knac2}
\begin{barticle}[mr]
\bauthor{\bsnm{Ben~Arous},~\bfnm{G{\'e}rard}\binits{G.}} \AND
  \bauthor{\bsnm{G{\"u}n},~\bfnm{Onur}\binits{O.}}
(\byear{2012}).
\btitle{Universality and extremal aging for dynamics of spin glasses on
  subexponential time scales}.
\bjournal{Comm. Pure Appl. Math.}
\bvolume{65}
\bpages{77--127}.
\bid{doi={10.1002/cpa.20372}, issn={0010-3640}, mr={2846638}}
\bptok{imsref}%
\end{barticle}
\endbibitem

%b6 ###
\bibitem{knac}
\begin{barticle}[mr]
\bauthor{\bsnm{Ben~Arous},~\bfnm{G{\'e}rard}\binits{G.}} \AND
  \bauthor{\bsnm{{\v{C}}ern{\'y}},~\bfnm{Ji{\v{r}}{\'{\i}}}\binits{J.}}
(\byear{2007}).
\btitle{Scaling limit for trap models on {$\mathbb{Z}\sp d$}}.
\bjournal{Ann. Probab.}
\bvolume{35}
\bpages{2356--2384}.
\bid{doi={10.1214/009117907000000024}, issn={0091-1798}, mr={2353391}}
\bptok{imsref}%
\end{barticle}
\endbibitem

%b7 ###
\bibitem{knac1}
\begin{barticle}[mr]
\bauthor{\bsnm{Ben~Arous},~\bfnm{G{\'e}rard}\binits{G.}} \AND
  \bauthor{\bsnm{{\v{C}}ern{\'y}},~\bfnm{Ji{\v{r}}{\'{\i}}}\binits{J.}}
(\byear{2008}).
\btitle{The arcsine law as a universal aging scheme for trap models}.
\bjournal{Comm. Pure Appl. Math.}
\bvolume{61}
\bpages{289--329}.
\bid{doi={10.1002/cpa.20177}, issn={0010-3640}, mr={2376843}}
\bptok{imsref}%
\end{barticle}
\endbibitem

%b8 ###
\bibitem{knacm}
\begin{barticle}[mr]
\bauthor{\bsnm{Ben~Arous},~\bfnm{G{\'e}rard}\binits{G.}},
  \bauthor{\bsnm{{\v{C}}ern{\'y}},~\bfnm{Ji{\v{r}}{\'{\i}}}\binits{J.}} \AND
  \bauthor{\bsnm{Mountford},~\bfnm{Thomas}\binits{T.}}
(\byear{2006}).
\btitle{Aging in two-dimensional {B}ouchaud's model}.
\bjournal{Probab. Theory Related Fields}
\bvolume{134}
\bpages{1--43}.
\bid{doi={10.1007/s00440-004-0408-1}, issn={0178-8051}, mr={2221784}}
\bptok{imsref}%
\end{barticle}
\endbibitem

%b9 ###
\bibitem{knbfggm}
\begin{barticle}[mr]
\bauthor{\bsnm{Bezerra},~\bfnm{S.~C.}\binits{S.~C.}},
  \bauthor{\bsnm{Fontes},~\bfnm{L.~R.~G.}\binits{L.~R.~G.}},
  \bauthor{\bsnm{Gava},~\bfnm{R.~J.}\binits{R.~J.}},
  \bauthor{\bsnm{Gayrard},~\bfnm{V.}\binits{V.}} \AND
  \bauthor{\bsnm{Mathieu},~\bfnm{P.}\binits{P.}}
(\byear{2012}).
\btitle{Scaling limits and aging for asymmetric trap models on the complete
  graph and {$K$} processes}.
\bjournal{ALEA Lat. Am. J. Probab. Math. Stat.}
\bvolume{9}
\bpages{303--321}.
\bid{issn={1980-0436}, mr={3069367}}
\bptnote{check year}%
\bptok{imsref}%
\end{barticle}
\endbibitem

%b10 ###
\bibitem{knbd}
\begin{barticle}[auto:STB|2013/10/14|10:36:11]
\bauthor{\bsnm{Bouchaud},~\bfnm{J.~P.}\binits{J.~P.}} \AND
  \bauthor{\bsnm{Dean},~\bfnm{D.~S.}\binits{D.~S.}}
(\byear{1995}).
\btitle{Aging on Parisi's tree}.
\bjournal{J. Phys. I France}
\bvolume{5}
\bpages{265--286}.
\bptok{imsref}%
\end{barticle}
\endbibitem

%b11 ###
\bibitem{knbf}
\begin{barticle}[author]
\bauthor{\bsnm{Bovier},~\bfnm{Anton}\binits{A.}} \AND
  \bauthor{\bsnm{Faggionato},~\bfnm{Alessandra}\binits{A.}}
(\byear{2005}).
\btitle{Spectral characterisation of ageing: The REM-like trap model in the
  complete graph}.
\bjournal{Ann. Appl. Probab.}
\bvolume{15}
\bpages{1997--2037}.
\bptok{imsref}%
\end{barticle}
\endbibitem

%b12 ###
\bibitem{knbg}
\begin{barticle}[mr]
\bauthor{\bsnm{Bovier},~\bfnm{Anton}\binits{A.}} \AND
  \bauthor{\bsnm{Gayrard},~\bfnm{V{\'e}ronique}\binits{V.}}
(\byear{2013}).
\btitle{Convergence of clock processes in random environments and ageing in the
  {$p$}-spin {SK} model}.
\bjournal{Ann. Probab.}
\bvolume{41}
\bpages{817--847}.
\bid{doi={10.1214/11-AOP705}, issn={0091-1798}, mr={3077527}}
\bptnote{check year}%
\bptok{imsref}%
\end{barticle}
\endbibitem

%b13 ###
\bibitem{knbgs}
\begin{barticle}[mr]
\bauthor{\bsnm{Bovier},~\bfnm{Anton}\binits{A.}},
\bauthor{\bsnm{Gayrard},~\bfnm{V{\'e}ronique}\binits{V.}} \AND
\bauthor{\bsnm{{\v{S}}vejda},~\bfnm{Ad{\'e}la}\binits{A.}}
(\byear{2013}).
\btitle{Convergence to extremal processes in random environments and extremal ageing in {SK} models}.
\bjournal{Probab. Theory Related Fields}
\bvolume{157}
\bpages{251--283}.
\bid{doi={10.1007/s00440-012-0456-x}, issn={0178-8051}, mr={3101847}}
\end{barticle}
\bptok{imsref}%
% NOT OUTPUTED:
%   issn = 0178-8051
%   url = http://dx.doi.org/10.1007/s00440-012-0456-x
%   number = 1-2
%   coden = PTRFEU
%   fjournal = Probability Theory and Related Fields
\endbibitem

%b14 ###
\bibitem{kncb}
\begin{barticle}[auto:STB|2013/10/14|10:36:11]
\bauthor{\bsnm{Compte},~\bfnm{A.}\binits{A.}} \AND
  \bauthor{\bsnm{Bouchaud},~\bfnm{J.~P.}\binits{J.~P.}}
(\byear{1998}).
\btitle{Localization in one-dimensional random walks}.
\bjournal{J. Phys. A: Math. Gen.}
\bvolume{31}
\bpages{6113--6121}.
\bptok{imsref}%
\end{barticle}
\endbibitem

%b15 ###
\bibitem{knek}
\begin{bbook}[mr]
\bauthor{\bsnm{Ethier},~\bfnm{Stewart~N.}\binits{S.~N.}} \AND
  \bauthor{\bsnm{Kurtz},~\bfnm{Thomas~G.}\binits{T.~G.}}
(\byear{1986}).
\btitle{Markov Processes: Characterization and Convergence}.
\bpublisher{Wiley}, \blocation{New York}.
\bid{doi={10.1002/9780470316658}, mr={0838085}}
\bptok{imsref}%
\end{bbook}
\endbibitem

%b16 ###
\bibitem{knfin}
\begin{barticle}[mr]
\bauthor{\bsnm{Fontes},~\bfnm{L.~R.~G.}\binits{L.~R.~G.}},
  \bauthor{\bsnm{Isopi},~\bfnm{M.}\binits{M.}} \AND
  \bauthor{\bsnm{Newman},~\bfnm{C.~M.}\binits{C.~M.}}
(\byear{2002}).
\btitle{Random walks with strongly inhomogeneous rates and singular diffusions:
  Convergence, localization and aging in one dimension}.
\bjournal{Ann. Probab.}
\bvolume{30}
\bpages{579--604}.
\bid{doi={10.1214/aop/1023481003}, issn={0091-1798}, mr={1905852}}
\bptok{imsref}%
\end{barticle}
\endbibitem

%b17 ###
\bibitem{knfl}
\begin{bmisc}[auto:STB|2013/10/14|10:36:11]
\bauthor{\bsnm{Fontes},~\bfnm{L.~R.~G.}\binits{L.~R.~G.}} \AND
  \bauthor{\bsnm{Lima},~\bfnm{P.~H.~S.}\binits{P.~H.~S.}}
(\byear{2009}).
\bhowpublished{Convergence of symmetric trap models in the hypercube. In
  \textit{XVth International Congress on Mathematical Physics}, \textit{2006},
  \textit{Rio de Janeiro}. \textit{New Trends in Mathematical Physics} 285--297.
  Springer, Dordrecht}.
\bptok{imsref}%
\end{bmisc}
\endbibitem

%b18 ###
\bibitem{knfm}
\begin{barticle}[mr]
\bauthor{\bsnm{Fontes},~\bfnm{L.~R.~G.}\binits{L.~R.~G.}} \AND
  \bauthor{\bsnm{Mathieu},~\bfnm{P.}\binits{P.}}
(\byear{2008}).
\btitle{{$K$}-processes, scaling limit and aging for the trap model in the
  complete graph}.
\bjournal{Ann. Probab.}
\bvolume{36}
\bpages{1322--1358}.
\bid{doi={10.1214/07-AOP360}, issn={0091-1798}, mr={2435851}}
\bptok{imsref}%
\end{barticle}
\endbibitem

%b19 ###
\bibitem{kng}
\begin{bmisc}[auto:STB|2013/10/14|10:36:11]
\bauthor{\bsnm{Gayrard},~\bfnm{V.}\binits{V.}}
(\byear{2010}).
\bhowpublished{Aging in reversible dynamics of disordered systems. I. Emergence
  of the arcsine law in Bouchaud's asymmetric trap model on the complete graph.
  Available at \arxivurl{arXiv:1008.3855v1} [math.PR] (longer version of
  [21]).}
\bptok{imsref}%
\end{bmisc}
\endbibitem

%b20 ###
\bibitem{kng2}
\begin{bmisc}[auto:STB|2013/10/14|10:36:11]
\bauthor{\bsnm{Gayrard},~\bfnm{V.}\binits{V.}}
(\byear{2010}).
\bhowpublished{Aging in reversible dynamics of disordered systems. II.
  Emergence of the arcsine law in the random hopping time dynamics of the REM.
  Available at \arxivurl{arXiv:1008.3849}.}
\bptok{imsref}%
\end{bmisc}
\endbibitem

%b21 ###
\bibitem{kng1}
\begin{barticle}[mr]
\bauthor{\bsnm{Gayrard},~\bfnm{V{\'e}ronique}\binits{V.}}
(\byear{2012}).
\btitle{Convergence of clock process in random environments and aging in
  {B}ouchaud's asymmetric trap model on the complete graph}.
\bjournal{Electron. J. Probab.}
\bvolume{17}
\bpages{1--33}.
\bid{doi={10.1214/EJP.v17-2211}, issn={1083-6489}, mr={2959064}}
\bptok{imsref}%
\end{barticle}
\endbibitem

%b22 ###
\bibitem{kngg}
\begin{bmisc}[auto:STB|2013/10/14|10:36:11]
\bauthor{\bsnm{Gayrard},~\bfnm{V.}\binits{V.}} \AND
  \bauthor{\bsnm{G{\"u}n},~\bfnm{O.}\binits{O.}}
(\byear{2013}).
\bhowpublished{In preparation}.
\bptok{imsref}%
\end{bmisc}
\endbibitem

%b23 ###
\bibitem{knlwz}
\begin{barticle}[mr]
\bauthor{\bsnm{LePage},~\bfnm{Raoul}\binits{R.}},
  \bauthor{\bsnm{Woodroofe},~\bfnm{Michael}\binits{M.}} \AND
  \bauthor{\bsnm{Zinn},~\bfnm{Joel}\binits{J.}}
(\byear{1981}).
\btitle{Convergence to a stable distribution via order statistics}.
\bjournal{Ann. Probab.}
\bvolume{9}
\bpages{624--632}.
\bid{issn={0091-1798}, mr={0624688}}
\bptok{imsref}%
\end{barticle}
\endbibitem

%b24 ###
\bibitem{knne}
\begin{barticle}[mr]
\bauthor{\bsnm{Nieuwenhuizen},~\bfnm{Th.~M.}\binits{T.~M.}} \AND
  \bauthor{\bsnm{Ernst},~\bfnm{M.~H.}\binits{M.~H.}}
(\byear{1985}).
\btitle{Excess noise in a hopping model for a resistor with quenched disorder}.
\bjournal{J. Stat. Phys.}
\bvolume{41}
\bpages{773--801}.
\bid{doi={10.1007/BF01010003}, issn={0022-4715}, mr={0823623}}
\bptok{imsref}%
\end{barticle}
\endbibitem\vadjust{\goodbreak}

%b25 ###
\bibitem{knsn1}
\begin{barticle}[auto:STB|2013/10/14|10:36:11]
\bauthor{\bsnm{Sasaki},~\bfnm{M.}\binits{M.}} \AND
  \bauthor{\bsnm{Nemoto},~\bfnm{K.}\binits{K.}}
(\byear{2000}).
\btitle{Analysis on aging in the generalized random energy model}.
\bjournal{J. Phys. Soc. Jpn.}
\bvolume{69}
\bpages{3045--3050}.
\bptok{imsref}%
\end{barticle}
\endbibitem

%b26 ###
\bibitem{knsn2}
\begin{barticle}[auto:STB|2013/10/14|10:36:11]
\bauthor{\bsnm{Sasaki},~\bfnm{M.}\binits{M.}} \AND
  \bauthor{\bsnm{Nemoto},~\bfnm{K.}\binits{K.}}
(\byear{2001}).
\btitle{Numerical study of aging in the generalized random energy model}.
\bjournal{J. Phys. Soc. Jpn.}
\bvolume{70}
\bpages{1099--1104}.
\bptok{imsref}%
\end{barticle}
\endbibitem

\end{thebibliography}
\end{document}